\newcommand{\NN}{\mathbb N}
\newcommand{\QQ}{\mathbb Q}
\newcommand{\CC}{\mathbb C}
\newcommand{\RR}{\mathbb R}
\newcommand{\ZZ}{\mathbb Z}
\newcommand{\ds}{\displaystyle}
\newcommand{\EE}{\mathcal E}
\newcommand{\DD}{\mathcal D}
\newcommand{\BB}{\mathcal B}
\newcommand{\beq}{\begin{eqnarray}}
\newcommand{\eeq}{\end{eqnarray}}
\newcommand{\beqs}{\begin{eqnarray*}}
\newcommand{\eeqs}{\end{eqnarray*}}
 \newtheorem{thm}{Theorem}[section]
 \newtheorem{lem}[thm]{Lemma}
 \newtheorem{prop}[thm]{Proposition}
 \theoremstyle{definition}
 \newtheorem{defn}[thm]{Definition}
 \theoremstyle{remark}
 \newtheorem{rem}[thm]{Remark}
 \newtheorem*{ex}{Example}
 \numberwithin{equation}{section}
\begin{document}

%
%
%
%
%
%
%
%
%

\title[On the solution of the Cauchy problem]{On the solution of the Cauchy problem in the weighted spaces of Beurling 
ultradistributions}

\author[S. Pilipovi\' c]{Stevan Pilipovi\' c}

\address{University of Novi Sad,
Trg D. Obradovi\' ca 4
21000 Novi Sad,
Serbia
}

\email{pilipovic@dmi.uns.ac.rs}
%
\author[B. Prangoski]{Bojan Prangoski}
\address{Department for Mathematics, Faculty of Mechanical
Engineering-Skopje, Karposh 2 b.b., 1000 Skopje, Macedonia}
\email{bprangoski@yahoo.com}

\author[D. Velinov]{Daniel Velinov}
\address{Department for Mathematics, Faculty of Civil
Engineering-Skopje, Partizanski Odredi 24,
P.O. box 560, 1000 Skopje, Macedonia}
\email{velinovd@gf.ukim.edu.mk}
\subjclass{Primary 47D03; Secondary 35A01}
\keywords{Cauchy problem, strict weak solution, $F$--weak solution}


\begin{abstract}
Results of Da Prato and Sinestrari \cite{d81}, on differential operators with non-dense domain but satisfying the Hille--Yosida condition,
are applied in the setting of Beurling weighted spaces of ultradistributions
 $\DD'^{(s)}_{L^p}((0,T)\times U)$, where $U$ is open and bounded set in $\mathbb R^d$.

\end{abstract}

\date{}
\maketitle

\section{Introduction}

Da Prato and Sinestrary \cite{d81}   have studied
the Cauchy problem
\begin{equation}\label{ACP} u'(t)=Au(t)+f(t), u(0)=u_0,\end{equation}
where $A$ is a closed operator in a Banach space $E$ with not
necessarily dense domain in $E$ but satisfying the Hille-Yosida
condition. Here  $u_0\in E$, $f$ is  the $E$-valued continuous or
$L^p-$ function on $[0,T]$. They have considered various classes
of equations and types of solutions illustrating their theory.
Regularity properties of solutions is extended much later in
\cite{S2010}.

Our aim in this paper is to extend the results of \cite{d81} for (\ref{ACP}) to  weighted Schwartz spaces of distributions and Beurling space of  ultradistributions \cite{k91}-\cite{k82}.
Since the weighted Schwartz space $\mathcal D'_{L^p}$ (\cite{SchwartzV1}) can be involved in this theory similarly as Beurling type spaces, and the second ones are more delicate, we focus our investigations to
the Beurling case, more precisely to the space of ultradistributions $\mathcal D'^{(s)}_{L^p}((0,T)\times U))$, $U$ is a bounded domain in  $\mathbb R^d,$ related to the Gevrey sequence $p!^s, s>1$ (see \cite{PilipovicT} for $U=\mathbb R^d$).
In order to apply results of \cite{d81} in this abstract setting,
we study the topological structure of  spaces $\DD^{s}_{L^p,h}(U),
p\in[1,\infty]$ (with a special analysis for $p=\infty$), the
closures of $\DD^{(s)}(U)$ in such spaces, corresponding
projective limits, tensor products, their duals as well as  vector
valued version of such spaces. As a special result, we note that $\DD_{L^p}^{(s)}(U)$ is nuclear for bounded $U$. Also we have that all spaces $\DD_{L^p}^{(s)}(U)$ are isomorphic to $\dot{\BB}^{(s)}(U)$ for bounded $U$. Both assertions do not hold for $U=\mathbb R^d$. The main results of the paper are
related to the structure of quoted spaces. Such preparatory
results
 are needed for the formulation of the Cauchy  problem in this abstract setting and for the application of results in \cite{d81}.
 Our main theorem in the second part of the paper reads:

\begin{thm}
Let $U$ be a bounded domain in $\RR^d$ with smooth boundary and $A(x,\partial_x)$ be a strongly elliptic operator of order $2m$ on $U$.
Then for each $f\in \DD'^{(s)}_{L^p}\left((0,T)\times U\right)$ there exists $u\in \DD'^{(s)}_{L^p}\left((0,T)\times U\right)$ such that $$u'_t+A(x,\partial_x)u=f \mbox{ in } \DD'^{(s)}_{L^p}\left((0,T)\times U\right).$$
\end{thm}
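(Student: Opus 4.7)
The plan is to reduce the ultradistributional equation to an abstract Cauchy problem in a Banach space in the spirit of \cite{d81}, and then to lift the resulting solution back to $\DD'^{(s)}_{L^p}((0,T)\times U)$ via an ultradifferential operator acting only in the time variable. The first step is to exploit the nuclearity of $\DD^{(s)}_{L^p}(U)$ for bounded $U$, recorded in the introduction, together with the vector valued and tensor product results developed in the preparatory sections, in order to identify
$$\DD'^{(s)}_{L^p}((0,T)\times U) \;\cong\; \DD'^{(s)}_{L^p}((0,T))\,\widehat{\otimes}\,\DD'^{(s)}_{L^p}(U)$$
and thereby regard $f$ as a vector valued ultradistribution on $(0,T)$ taking values in the spatial factor.

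Next, I would use the structural theorems for $\DD'^{(s)}_{L^p}$ from the first part of the paper to represent
$$f \;=\; P(d/dt)\,g,$$
where $P$ is an ultradifferential operator of class $(s)$ acting only in $t$, and $g\in L^p((0,T);X)$ for a suitable Banach space $X$ in which a realization of $A(x,\partial_x)$, with say zero Dirichlet boundary conditions, is closed and satisfies the Hille--Yosida condition. A natural choice is $X=L^p(U)$ for $1\le p<\infty$ and $X=C(\overline U)$ for $p=\infty$; strong ellipticity together with classical elliptic resolvent estimates provides the Hille--Yosida bounds while allowing for a non-dense domain, which is precisely the context of \cite{d81}.

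With $g$ in hand, I would apply the Da Prato--Sinestrari theorem to produce an $F$-weak solution $v\in C([0,T];X)$ of
$$v'(t)+Av(t)=g(t),\qquad v(0)=0.$$
Since $X$ embeds continuously into $\DD'^{(s)}_{L^p}(U)$, the function $v$ belongs to $\DD'^{(s)}_{L^p}((0,T)\times U)$. Because $P(d/dt)$ acts only in $t$ and hence commutes with the spatial operator $A(x,\partial_x)$, the ultradistribution $u:=P(d/dt)\,v$ then satisfies $u'_t+A(x,\partial_x)u=P(d/dt)(v'+Av)=P(d/dt)\,g=f$ in $\DD'^{(s)}_{L^p}((0,T)\times U)$, as required.

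The main obstacle is the second step: producing, for every $f$, a representation in which the ultradifferential factor is concentrated in the time variable alone, so that no spatial ultradifferential operator (which would not commute with $A(x,\partial_x)$ and would push the datum out of the Banach framework of \cite{d81}) enters the decomposition. Concentrating the singularity in the $t$-direction is what makes the spatial Da Prato--Sinestrari machinery applicable, and it is here that the nuclearity of $\DD^{(s)}_{L^p}(U)$, the isomorphism $\DD^{(s)}_{L^p}(U)\cong\dot{\BB}^{(s)}(U)$ for bounded $U$, and the vector valued structure theorems of the preparatory sections enter essentially.
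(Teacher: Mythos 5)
Your overall architecture --- reduce to an abstract Cauchy problem in the time variable, solve it by Da Prato--Sinestrari, and lift back through the kernel theorem --- is the same as the paper's, but the decisive step is missing and, as you have set it up, cannot be carried out. The representation $f=P(d/dt)\,g$ with $P$ an ultradifferential operator acting only in $t$ and $g\in L^p((0,T);X)$ for a \emph{classical} Banach space $X$ such as $L^p(U)$ or $\mathcal{C}(\overline U)$ does not exist for general $f\in \DD'^{(s)}_{L^p}\left((0,T)\times U\right)$. Indeed, take $f=1(t)\otimes T(x)$ with $T\in\DD'^{(s)}_{L^1}(U)$ of genuinely infinite order: testing $P(d/dt)g$ against $\varphi(t)\psi(x)$ with $\int_0^T\varphi(t)\,dt\neq 0$ and integrating in $t$ first would exhibit $T$ as the $L^p(U)$ function $x\mapsto \left(\int_0^T\varphi\,dt\right)^{-1}\int_0^T g(t,x)\,P(-d/dt)\varphi(t)\,dt$, a contradiction. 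The structure theorem (Proposition \ref{330} applied on $(0,T)\times U$) only yields $f=\sum_{\alpha,\beta}\partial_t^{\alpha}\partial_x^{\beta}F_{\alpha,\beta}$ with derivatives in \emph{both} variables, and no purely temporal operator can absorb the spatial ones. You correctly flag this as ``the main obstacle,'' but appealing to nuclearity and the tensor product isomorphisms does not remove it: those results identify the space, they do not regularize the spatial singularity.

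The paper's resolution, which is the actual content of the proof, is to change the Banach space: it takes $E=\tilde{{\mathcal D}}^{'s}_{L^2,h}(U)$, the weighted sequence space whose elements $(F_\beta)_\beta$ encode $\sum_\beta\partial_x^\beta F_\beta$, so that the spatial derivatives are stored in the sequence index rather than applied. The operator is then lifted to act componentwise, $(F_\beta)_\beta\mapsto(-A(x,\partial_x)F_\beta)_\beta$, with domain built from $W^{2m,2}(U)\cap W^{m,2}_0(U)$ in each slot, and one must \emph{prove} that this lift is closed and satisfies the Hille--Yosida condition; this uses the a priori elliptic estimate of Proposition \ref{1205} together with the $L^2$ resolvent bounds, and is not a quotation of the classical generation theorems for $L^p(\Omega)$. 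With this $E$ the datum becomes $\mathbf f\in\DD'^{(s)}_{L^1}(0,T;E)$ with only temporal derivatives outstanding, Theorem \ref{1220} applies, and the solution is pushed back to $\DD'^{(s)}_{L^1}\left((0,T)\times U\right)$ via the map $S$ and the kernel theorem (Theorem \ref{1310}). Without this construction, or an equivalent device for concentrating the singularity in $t$ at the price of an exotic state space, your argument does not go through; moreover, even granting a representation $f=P(d/dt)g$, the commutation $A(x,\partial_x)P(d/dt)v=P(d/dt)A(x,\partial_x)v$ for the closed unbounded realization of $A$ would still require the kind of justification the paper supplies in Lemma \ref{1295}.
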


In fact, we first solve (\ref{ACP}) in the space of Banach valued
ultradistributions ${\mathcal D}'^{(s)}_{L^p}(0,T;E)$, i.e. \beqs
\langle \mathbf{u}'(t),\varphi(t)\rangle=A\langle
\mathbf{u}(t),\varphi(t)\rangle+\langle
\mathbf{f}(t),\varphi(t)\rangle,\, \forall\varphi\in
\DD^{(s)}_{L^q}(0,T), \eeqs where $A:D(A)\subseteq E\rightarrow E$
is closed operator which satisfies the Hille-Yosida condition
$$\|(\lambda-\omega)^k R(\lambda:A)^k\|\leq C, \mbox{ for } \lambda>\omega,\,
k\in\ZZ_+.$$ Then, by using the theory that we perviously develop,
we prove the above-mentioned result.

For the background material we mention  \cite{Ph},   \cite{Pazy}, \cite{Lun}, \cite{NS},\cite{S}.
Moreover, we give references for  another approaches to the
abstract Cauchy problem  with non-densely defined $A$  through the
theory of integrated, convoluted, distribution or
ultradistribution semigroups,  \cite{a11}-\cite{cha}, \cite{l115},
\cite{mn}-\cite{LY}, \cite{n181}, \cite{me152}.

  The paper is organized as follows.

The Banach space ${\mathcal D}^{(s)}_{L^p,h}(U)$ and its dual
${\mathcal D}^{'(s)}_{L^p,h}(U)$ are explained in Section 1.
Section 2 is devoted to the Beurling type test spaces ${\mathcal
D}^{(s)}_{L^p}(U)$ and their corresponding duals. In Section 3 we
consider the vector valued ultradistribution spaces ${\mathcal
D}'^{(s)}_{L^p}(U;E)$ and ${\mathcal D}'^{(s)}_{L^p,h}(U;E)$,
where $U$ is a bounded open subset of $\RR^d$. The boundness of
$U$ is important since it implies nuclearity of ${\mathcal
D}^{(s)}_{L^p}(U)$ and ${\mathcal D}^{'(s)}_{L^p}(U)$ which in
turn will imply a very important kernel theorem when $E$ is equal
to ${\mathcal D}'^{(s)}_{L^p}(U)$. In the end of this section we
are particulary interested in the spaces ${\mathcal
D}'^{(s)}_{L^p}(U;E)$ when $E$ is a Banach space. We start Section
4 by defining the Banach space $\tilde{{\mathcal
D}}'^s_{L^p,h}(0,T;E)$ consisting of sequences of Bochner $L^p$
functions with certain growth condition. In this abstract setting
we define the Cauchy problem (\ref{ACP}) and recall from \cite
{d81} two types of solutions of (\ref{ACP}). Then, using the
proof in \cite{d81} we prove the
existence of such solutions in $\tilde{{\mathcal
D}}'^s_{L^p,h}(0,T;E)$ and use this to prove existence of solution
of (\ref{ACP}) in the space of Banach-valued ultradistributions
${\mathcal D}'^{(s)}_{L^p}(0,T;E)$. We apply in Section 5
results of Section 4 for several important instances of $A$ and $E$
considered by Da Prato and Sinestrari in \cite{d81}, but in our
ultradistributional setting. The main part is the proof of the theorem
 that we announced above by using the theory developed
in the Sections 1--3.

\subsection{Preliminaries}

The sets of natural, integer, positive integer, real and complex numbers are denoted by $\NN$, $\ZZ$, $\ZZ_+$, $\RR$, $\CC$. We use the symbols for $x\in \RR^d$: $\langle x\rangle =(1+|x|^2)^{1/2} $,
$D^{\alpha}= D_1^{\alpha_1}\ldots D_d^{\alpha_d},\quad D_j^
{\alpha_j}={i^{-1}}\partial^{\alpha_j}/{\partial x}^{\alpha_j}$, $\alpha=(\alpha_1,\alpha_2,\ldots,\alpha_d)\in\NN^d$.\\
\indent Let $s>1$ and $U\subseteq\RR^d$ be an open set. Following Komatsu \cite{k91}, for a compact set $K\subseteq U$, define $\EE^{s,h}(K)$ as the Banach space (from now on abbreviated as $(B)$-space) of all $\varphi\in \mathcal{C}^{\infty}(U)$ which satisfy $\ds\sup_{\alpha\in\NN^d}\sup_{x\in K}\frac{|D^{\alpha}\varphi(x)|}{h^{|\alpha|}\alpha!^s}<\infty$ and $\DD^{s,h}_K$ as the $(B)$-space of all $\varphi\in \mathcal{C}^{\infty}\left(\RR^d\right)$ with support in $K$, which satisfy $\ds\sup_{\alpha\in\NN^d}\sup_{x\in K}\frac{|D^{\alpha}\varphi(x)|}{h^{\alpha}\alpha!^s}<\infty$. Define the spaces
$$
\EE^{(s)}(U)=\lim_{\substack{\longleftarrow\\ K\subset\subset U}}\lim_{\substack{\longleftarrow\\ h\rightarrow 0}} \EE^{s,h}(K),\,\,\,\,
\EE^{\{s\}}(U)=\lim_{\substack{\longleftarrow\\ K\subset\subset U}}
\lim_{\substack{\longrightarrow\\ h\rightarrow \infty}} \EE^{s,h}(K),
$$
\beqs
\DD^{(s)}_K=\lim_{\substack{\longleftarrow\\ h\rightarrow 0}} \DD^{s,h}_K,\,\,\,\, \DD^{(s)}(U)=\lim_{\substack{\longrightarrow\\ K\subset\subset U}}\DD^{(s)}_K,\\
\DD^{\{s\}}_K=\lim_{\substack{\longrightarrow\\ h\rightarrow \infty}} \DD^{s,h}_K,\,\,\,\, \DD^{\{s\}}(U)=\lim_{\substack{\longrightarrow\\ K\subset\subset U}}\DD^{\{s\}}_K.
\eeqs
The spaces of ultradistributions and ultradistributions with compact support of Beurling and Roumieu type are defined as the strong duals of $\DD^{(s)}(U)$ and $\EE^{(s)}(U)$, resp. $\DD^{\{s\}}(U)$ and $\EE^{\{s\}}(U)$. For the properties of these spaces, we refer to \cite{k91}, \cite{Komatsu2} and \cite{k82}.\\
\indent It is said that $\ds P(\xi ) =\sum _{\alpha \in \NN^d}c_{\alpha } \xi^{\alpha}$, $\xi \in \RR^d$, is an ultrapolynomial of the class $(s)$, resp. $\{s\}$, whenever the coefficients $c_{\alpha }$ satisfy the estimate $|c_{\alpha }|  \leq C L^{|\alpha|}/ \alpha!^s$, $\alpha \in \NN^d$ for some $L > 0$ and $C>0$, resp. for every $L > 0 $ and some $C_{L} > 0$. The corresponding operator  $P(D)=\sum_{\alpha} c_{\alpha}D^{\alpha}$ is an ultradifferential operator of the class $(s)$, resp. $\{s\}$ and they act continuously on $\EE^{(s)}(U)$ and $\DD^{(s)}(U)$, resp. $\EE^{\{s\}}(U)$ and $\DD^{\{s\}}(U)$ and the corresponding spaces of ultradistributions.

\section{ Banach  spaces of weighted ultradistributions}

\subsection{Basic Banach spaces}
Let $U$ be an open subset of $\RR^d$ and $1\leq p\leq \infty$. Let $\DD^{s}_{L^p,h}(U)$ be the space of all $\varphi\in\mathcal{C}^{\infty}(U)$ such that the norm $\ds \left(\sum_{\alpha\in\NN^d}\frac{h^{p|\alpha|}\left\|D^{\alpha}\varphi\right\|_{L^p(U)}^p}{\alpha!^{ps}}\right)^{1/p}$ is finite (with the obvious meaning when $p=\infty$).
One can simply prove:
\begin{lem}
$\DD^{s}_{L^p,h}(U)$ is a $(B)$-space, when $1\leq p \leq \infty$.
\end{lem}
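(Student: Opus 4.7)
The plan is to first verify that the expression is a genuine norm and then establish completeness by the standard passage-to-the-limit argument for sequences of derivatives. The norm properties are routine: positive homogeneity is immediate; separation holds because $\|\varphi\|=0$ forces in particular $\|\varphi\|_{L^p(U)}=0$; and the triangle inequality is Minkowski's inequality for $\ell^p(\NN^d)$ applied to the nonnegative sequences $\bigl(h^{|\alpha|}\|D^\alpha\varphi\|_{L^p(U)}/\alpha!^s\bigr)_{\alpha\in\NN^d}$, the case $p=\infty$ being even easier with a sup in place of an $\ell^p$ sum.

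For completeness, I would take $(\varphi_n)$ Cauchy in $\DD^{s}_{L^p,h}(U)$. Since for every $\alpha\in\NN^d$
$$\frac{h^{|\alpha|}\|D^\alpha(\varphi_n-\varphi_m)\|_{L^p(U)}}{\alpha!^s}\leq\|\varphi_n-\varphi_m\|_{\DD^{s}_{L^p,h}(U)},$$
each sequence $(D^\alpha\varphi_n)_n$ is Cauchy in $L^p(U)$; denote its limit by $\psi_\alpha\in L^p(U)$ and set $\varphi:=\psi_0$. For arbitrary $\chi\in\DD^{(s)}(U)$ (or even $\chi\in C_c^\infty(U)$), passing to the limit in $\int_U(D^\alpha\varphi_n)\chi\,dx=(-1)^{|\alpha|}\int_U\varphi_n\,D^\alpha\chi\,dx$ yields $\psi_\alpha=D^\alpha\varphi$ in $\DD'(U)$. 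Thus every distributional derivative of $\varphi$ lies in $L^p_{\mathrm{loc}}(U)$, so the Sobolev embedding (applied locally with $k$ arbitrarily large) furnishes a $C^\infty$ representative of $\varphi$ whose classical derivatives coincide a.e.\ with the $\psi_\alpha$.

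It then remains to show $\varphi\in\DD^{s}_{L^p,h}(U)$ together with $\varphi_n\to\varphi$ in norm. Given $\epsilon>0$, choose $N$ such that $\|\varphi_n-\varphi_m\|_{\DD^{s}_{L^p,h}(U)}<\epsilon$ for all $n,m\geq N$. For $p<\infty$ and any finite $F\subset\NN^d$,
$$\sum_{\alpha\in F}\frac{h^{p|\alpha|}\|D^\alpha(\varphi_n-\varphi_m)\|_{L^p(U)}^p}{\alpha!^{ps}}<\epsilon^p;$$
letting $m\to\infty$ inside this finite sum (using $L^p$-convergence of $D^\alpha\varphi_m$ to $D^\alpha\varphi$) and then $F\uparrow\NN^d$ by monotone convergence produces $\|\varphi_n-\varphi\|_{\DD^{s}_{L^p,h}(U)}\leq\epsilon$ for $n\geq N$. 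The triangle inequality then gives $\varphi\in\DD^{s}_{L^p,h}(U)$. The case $p=\infty$ is handled identically, replacing the $\ell^p$ sum by a supremum.

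The only mildly delicate point I expect is the appeal to Sobolev embedding used to upgrade local $W^{k,p}$-regularity for all $k$ to $C^\infty$-regularity of $\varphi$; everything else is a routine exchange of limits.
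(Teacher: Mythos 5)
Your proof is correct. The paper offers no proof of this lemma (it is prefaced only by ``One can simply prove''), and your argument --- Minkowski for the norm axioms, completeness via $L^p$-convergence of each $D^\alpha\varphi_n$, identification of the limits as distributional derivatives, local Sobolev embedding to recover a $\mathcal{C}^\infty$ representative, and the finite-partial-sum passage to the limit in the norm --- is exactly the standard argument the authors leave to the reader.
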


Let$1\leq p\leq \infty$ and $1\leq q\leq \infty$ be such that $\ds \frac{1}{p}+\frac{1}{q}=1$. Let $\DD^{(s)}_{L^p,h}(U)$ denotes the closure of $\DD^{(s)}(U)$ in $\DD^{s}_{L^p,h}(U)$. Denote by $\DD'^{(s)}_{L^p,h}(U)$ the strong dual of $\DD^{(s)}_{L^q,h}(U)$. Then, $\DD'^{(s)}_{L^p,h}(U)$ is continuously injected in $\DD'^{(s)}(U)$, for $1\leq p\leq \infty$. We will denote by $\mathcal{C}_0(U)$ the space of all continuous functions $f$ on $U$ such that for every $\varepsilon>0$ there exists $K\subset\subset U$ such that $|f(x)|< \varepsilon$ when $x\in U\backslash K$. We leave the proof of the next lemma to the reader.

\begin{lem}\label{10}
Let $\varphi\in\DD^{(s)}_{L^{\infty},h}(U)$. Then for every $\varepsilon>0$ there exist $K\subset\subset U$ and $k\in\ZZ_+$ such that
\beqs
\ds \sup_{\alpha\in\NN^d}\sup_{x\in U\backslash K}\frac{h^{|\alpha|}\left|D^{\alpha}\varphi(x)\right|}{\alpha!^s}\leq\varepsilon \mbox{ and } \sup_{|\alpha|\geq k} \frac{h^{|\alpha|}\left\|D^{\alpha}\varphi\right\|_{L^{\infty}(U)}}{\alpha!^s}\leq\varepsilon.
\eeqs
\end{lem}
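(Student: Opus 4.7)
The plan is to exploit the fact that $\DD^{(s)}_{L^{\infty},h}(U)$ is by definition the closure of $\DD^{(s)}(U)$ in the Banach space $\DD^{s}_{L^{\infty},h}(U)$, whose norm is
$\|\psi\|:=\sup_{\alpha}h^{|\alpha|}\|D^{\alpha}\psi\|_{L^{\infty}(U)}/\alpha!^s$.
Given $\varphi\in\DD^{(s)}_{L^{\infty},h}(U)$ and $\varepsilon>0$, I would first pick $\psi\in\DD^{(s)}(U)$ with $\|\varphi-\psi\|<\varepsilon/2$. Let $K:=\supp\psi\subset\subset U$. The two conclusions will then follow from this single approximation by treating the two statements separately.

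For the first estimate, I would observe that $\psi\equiv 0$ on $U\setminus K$, so that $D^{\alpha}\varphi(x)=D^{\alpha}(\varphi-\psi)(x)$ pointwise on $U\setminus K$ for every $\alpha$. Dividing by $\alpha!^s/h^{|\alpha|}$ and taking the supremum in $\alpha$ and in $x\in U\setminus K$ gives an upper bound of $\|\varphi-\psi\|<\varepsilon/2<\varepsilon$, which is exactly the first claimed inequality.

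For the second estimate, the key is that $\psi\in\DD^{(s)}_K$ actually lives in $\DD^{s,h'}_K$ for \emph{every} $h'>0$, since $\DD^{(s)}_K=\varprojlim_{h'\to 0}\DD^{s,h'}_K$. In particular, choosing some $h'>h$, there is a constant $C_{\psi}>0$ with
$\|D^{\alpha}\psi\|_{L^{\infty}(U)}\leq C_{\psi}\alpha!^s/h'^{|\alpha|}$
for all $\alpha$. Consequently $h^{|\alpha|}\|D^{\alpha}\psi\|_{L^{\infty}(U)}/\alpha!^s\leq C_{\psi}(h/h')^{|\alpha|}\to 0$ as $|\alpha|\to\infty$, so there exists $k\in\ZZ_+$ such that this expression is $<\varepsilon/2$ for all $|\alpha|\geq k$. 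A triangle inequality against $\|\varphi-\psi\|<\varepsilon/2$ then yields $\sup_{|\alpha|\geq k}h^{|\alpha|}\|D^{\alpha}\varphi\|_{L^{\infty}(U)}/\alpha!^s<\varepsilon$.

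There is no real obstacle here: everything is a direct consequence of density and of the projective nature of the spaces $\DD^{(s)}_K$; the only point worth verifying carefully is that the approximant $\psi$ supplies both a compact set $K$ (for the tail-in-$x$ estimate) and an arbitrarily small ``mollifier parameter'' $h'<h^{-1}$ bounding its derivatives (for the tail-in-$\alpha$ estimate), which is exactly what the definition of $\DD^{(s)}(U)$ provides.
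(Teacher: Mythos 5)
Your proof is correct; the paper explicitly leaves this lemma to the reader, and your argument---approximate $\varphi$ within $\varepsilon/2$ by some $\psi\in\DD^{(s)}(U)$, take $K=\supp\psi$ for the tail in $x$, and use that $\psi\in\DD^{(s)}_K$ satisfies Gevrey bounds with arbitrarily small parameter to kill the tail in $\alpha$---is exactly the intended one. The only cosmetic point is the parameter convention: with the paper's definition $\DD^{(s)}_K=\lim_{\longleftarrow,\,h'\rightarrow 0}\DD^{s,h'}_K$, membership in $\DD^{s,h''}_K$ gives $\|D^{\alpha}\psi\|_{L^{\infty}}\leq C\,h''^{|\alpha|}\alpha!^s$, so your bound $\|D^{\alpha}\psi\|_{L^{\infty}}\leq C_{\psi}\,\alpha!^s/h'^{|\alpha|}$ with $h'>h$ is obtained by taking $h''=1/h'<1/h$ (your closing remark states this correctly, while the middle paragraph momentarily inverts the parameter); this does not affect the validity of the argument.
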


\subsection{Duals of Banach spaces}

The main goal in this subsection is to give a representation of
the elements of $\DD'^{(s)}_{L^p,h}(U)$, $1\leq p \leq\infty$. In
order to do that, first we will construct a $(B)$-space
which will contain $\DD^{(s)}_{L^p,h}(U)$ as a closed subspace.
It is worth to note that the
main idea of this constructions is due to Komatsu \cite{k91}.\\
\indent For $1\leq p<\infty$ define \beqs
Y_{h,L^p}={\Big\{}(\psi_{\alpha})_{\alpha\in\NN^d}\Big|\,
\psi_{\alpha}\in L^p(U),\,
\left\|(\psi_{\alpha})_{\alpha}\right\|_{Y_{h,L^p}}=\eeqs \beqs
=\left(\sum_{\alpha\in\NN^d}\frac{h^{p|\alpha|}\left\|\psi_{\alpha}\right\|_{L^p(U)}^p}
{\alpha!^{ps}}\right)^{1/p}<\infty{\Big\}}.\eeqs Then one easily
verifies that $Y_{h,L^p}$ is a $(B)$-space, with the norm
$\|\cdot\|_{Y_{h,L^p}}$, for $1\leq p< \infty$. Let $p=\infty$.
Define \beqs
Y_{h,L^{\infty}}=\left\{(\psi_{\alpha})_{\alpha\in\NN^d}\Big|\,
\psi_{\alpha}\in \mathcal{C}_0(U),\,
\lim_{|\alpha|\rightarrow\infty}\frac{h^{|\alpha|}\left\|\psi_{\alpha}\right\|_{L^{\infty}(U)}}
{\alpha!^s}=0\right\}, \eeqs
with the norm $\ds\left\|(\psi_{\alpha})_{\alpha}\right\|_{Y_{h,L^{\infty}}}= \sup_{\alpha\in\NN^d}\frac{h^{|\alpha|}}{\alpha!^s}\|\psi_{\alpha}\|_{L^{\infty}(U)}$. One easily verifies that it is a $(B)$-space.\\
\indent Let $\tilde{U}$ be the disjoint union of countable number
of copies of $U$, one for each $\alpha\in\NN^d$, i.e. $\ds
\tilde{U}=\bigsqcup_{\alpha\in\NN^d}U_{\alpha}$, where
$U_{\alpha}=U$. Equip $\tilde{U}$ with the disjoint union
topology. Then $\tilde{U}$ is Hausdorff locally compact space.
Moreover every open set in $\tilde{U}$ is $\sigma$-compact. For
each $1\leq p<\infty$, one can define a Borel measure $\mu_p$ on
$\tilde{U}$ by
$\ds\mu_p(E)=\sum_{\alpha}\frac{h^{|\alpha|p}}{\alpha!^{ps}}|E\cap
U_{\alpha}|$, for $E$ a Borel subset of $\tilde{U}$, where $|E\cap
U_{\alpha}|$ is the Lebesgue measure of $E\cap U_{\alpha}$. It is
obviously locally finite, $\sigma$-finite and $\mu(K)<\infty$ for
every compact subset $K$ of $\tilde{U}$. By the properties of
$\tilde{U}$ described above, $\mu_p$ is regular (both inner and
outer regular). We obtained that $\mu_p$ is a Radon measure. It
follows that $Y_{h,L^p}$ is exactly $L^p(\tilde{U},\mu_p)$, for
$1\leq p<\infty$. In particular, $Y_{h,L^p}$ is a reflexive
$(B)$-space for $1<p<\infty$. For $p=\infty$, we will prove that
$Y_{h,L^{\infty}}$ is isomorphic to $\mathcal{C}_0(\tilde{U})$.
For $\psi\in\mathcal{C}_0(\tilde{U})$ denote by $\psi_{\alpha}$
the restriction of $\psi$ to $U_{\alpha}$. By the definition of
$\tilde{U}$, $K$ is compact subset of $\tilde{U}$ if and only if
$K\cap U_{\alpha}\neq \emptyset$ for only finitely many
$\alpha\in\NN^d$ and for those $\alpha$, $K\cap U_{\alpha}$ is
compact subset of $U_{\alpha}$. Now, one easily verifies that
$\psi_{\alpha}\in \mathcal{C}_0(U)$ and $\ds
\lim_{|\alpha|\rightarrow\infty}\|\psi_{\alpha}\|_{L^{\infty}(U)}=0$.
Moreover, if $\psi_{\alpha}\in\mathcal{C}_0(U)$, $\alpha\in\NN^d$,
are such that $\ds
\lim_{|\alpha|\rightarrow\infty}\|\psi_{\alpha}\|_{L^{\infty}(U)}=0$
then the function $\psi$ on $\tilde{U}$, defined by
$\psi(x)=\psi_{\alpha}(x)$, when $x\in U_{\alpha}$ is an element
of $\mathcal{C}_0(\tilde{U})$. We obtain that \beqs
\mathcal{C}_0(\tilde{U})=\left\{(\psi_{\alpha})_{\alpha\in\NN^d}\Big|\,
\psi_{\alpha}\in\mathcal{C}_0(U),\, \forall\alpha\in\NN^d,\,
\lim_{|\alpha|\rightarrow\infty}\|\psi_{\alpha}\|_{L^{\infty}(U)}=0\right\}.
\eeqs
Observe that the mapping $(\psi_{\alpha})_{\alpha\in\NN^d}\mapsto (\tilde{\psi}_{\alpha})_{\alpha\in\NN^d}$, where $\ds\tilde{\psi}_{\alpha}=\frac{h^{|\alpha|}}{\alpha!^s}\psi_{\alpha}$, is an isometry from $Y_{h,L^{\infty}}$ onto $\mathcal{C}_0(\tilde{U})$. For the purpose of the next proposition we will denote by $\iota$ the inverse mapping of this isometry, i.e. $\iota:\mathcal{C}_0(\tilde{U})\rightarrow Y_{h,L^{\infty}}$.\\
\indent Note that $\DD^{(s)}_{L^p,h}(U)$ can be identified with a closed subspace of $Y_{h,L^p}$ by the mapping $\varphi\mapsto ((-D)^{\alpha}\varphi)_{\alpha\in\NN^d}$. This is obvious for $1\leq p<\infty$ and for $p=\infty$ it follows from Lemma \ref{10}. Since $Y_{h,L^p}$ is reflexive for $1<p<\infty$ so is $\DD^{(s)}_{L^p,h}(U)$ as a closed subspace of a reflexive $(B)$-space.\\

Observe that spaces $L^p(U)$, for $1\leq p\leq \infty$, resp, $\left(\mathcal{C}_0(U)\right)'$, are continuously injected into $\DD'^{(s)}_{L^p,h}(U)$, resp. $\DD'^{(s)}_{L^1,h}(U)$. For $\alpha\in\NN^d$ and $F\in L^p(U)$, resp. $F\in\left(\mathcal{C}_0(U)\right)'$, we define $D^{\alpha}F\in \DD'^{(s)}_{L^p,h}(U)$, resp. $D^{\alpha}F\in \DD'^{(s)}_{L^1,h}(U)$, by
\beqs
\langle D^{\alpha} F,\varphi\rangle&=& \int_U F(x) (-D)^{\alpha}\varphi(x)dx,\, \varphi\in \DD^{(s)}_{L^q,h}(U), \mbox{ resp.}\\
\langle D^{\alpha} F,\varphi\rangle&=& \int_U (-D)^{\alpha}\varphi(x)dF,\, \varphi\in \DD^{(s)}_{L^{\infty},h}(U).
\eeqs
It is easy to verify that $D^{\alpha}F$ is well defined element of $\DD'^{(s)}_{L^p,h}(U)$, resp. $\DD'^{(s)}_{L^1,h}(U)$, and in fact it is equal to its ultradistributional derivative when we regard $F$ as an element of $\DD'^{(s)}(U)$.

\begin{prop}\label{20}
Let $1< p\leq\infty$. For every $T\in\DD'^{(s)}_{L^p,h}(U)$, there exist $C>0$ and $F_{\alpha}\in L^p(U)$, $\alpha\in\NN^d$, such that
\beq\label{21}
\left(\sum_{\alpha\in\NN^d} \frac{\alpha!^{ps}}{h^{|\alpha|p}}\|F_{\alpha}\|_{L^p(U)}^p\right)^{1/p}\leq C \mbox{ and } T=\sum_{|\alpha|=0}^{\infty} D^{\alpha}F_{\alpha}.
\eeq
When $p=1$, for every $T\in\DD'^{(s)}_{L^1,h}(U)$, there exist $C>0$ and Radon measures $F_{\alpha}\in \left(\mathcal{C}_0(U)\right)'$, $\alpha\in\NN^d$, such that
\beq\label{22}
\sum_{\alpha\in \NN^d} \frac{\alpha!^s}{h^{|\alpha|}}\|F_{\alpha}\|_{\left(\mathcal{C}_0(U)\right)'}\leq C \mbox{ and } T=\sum_{|\alpha|=0}^{\infty} D^{\alpha}F_{\alpha}.
\eeq
Moreover, if $B$ is a bounded subset of $\DD'^{(s)}_{L^p,h}(U)$, then there exists $C>0$ independent of $T\in B$ and for each $T\in B$ there exist $F_{\alpha}\in L^p(U)$, $\alpha\in\NN^d$, for $1<p\leq \infty$, resp. $F_{\alpha}\in \left(\mathcal{C}_0(U)\right)'$, $\alpha\in\NN^d$, for $p=1$, such that (\ref{21}), resp. (\ref{22}), holds.\\
\indent If $F_{\alpha}\in L^p(U)$, $\alpha\in\NN^d$, for $1< p\leq\infty$, resp. $F_{\alpha}\in \left(\mathcal{C}_0(U)\right)'$, $\alpha\in\NN^d$, for $p=1$, are such that $\ds\left(\sum_{\alpha\in\NN^d} \frac{\alpha!^{ps}}{h^{|\alpha|p}}\|F_{\alpha}\|_{L^p(U)}^p\right)^{1/p}<\infty$, for $1< p\leq\infty$, resp. $\ds \sum_{\alpha\in \NN^d} \frac{\alpha!^s}{h^{|\alpha|}}\|F_{\alpha}\|_{\left(\mathcal{C}_0(U)\right)'}<\infty$, for $p=1$, then the series $\ds\sum_{|\alpha|=0}^{\infty}D^{\alpha}F_{\alpha}$ converges absolutely in $\DD'^{(s)}_{L^p,h}(U)$, resp. $\DD'^{(s)}_{L^1,h}(U)$.
\end{prop}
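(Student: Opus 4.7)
The plan is to exploit the fact, recorded just before the proposition, that $\DD^{(s)}_{L^q,h}(U)$ embeds isometrically as a closed subspace of $Y_{h,L^q}$ via the map $\Phi\colon\varphi \mapsto ((-D)^\alpha \varphi)_{\alpha\in\NN^d}$. Given $T\in\DD'^{(s)}_{L^p,h}(U)=(\DD^{(s)}_{L^q,h}(U))'$, I would apply the Hahn--Banach theorem to extend $T\circ\Phi^{-1}$ from $\Phi(\DD^{(s)}_{L^q,h}(U))$ to a continuous linear functional $\tilde T$ on all of $Y_{h,L^q}$ with the same operator norm. The representations (\ref{21}) and (\ref{22}) are then read off from a concrete description of the dual $(Y_{h,L^q})'$, since $T(\varphi)=\tilde T(\Phi(\varphi))=\sum_\alpha \int_U F_\alpha (-D)^\alpha\varphi\,dx=\langle\sum_\alpha D^\alpha F_\alpha,\varphi\rangle$.

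The key step is to identify $(Y_{h,L^q})'$ concretely. I would pass to the rescaled variables $\tilde\psi_\alpha=(h^{|\alpha|}/\alpha!^s)\psi_\alpha$, which turns the $Y_{h,L^q}$--norm into the $L^q(\tilde U,\mu_q)$--norm when $1\le q<\infty$, and into the sup-norm on $\mathcal{C}_0(\tilde U)$ when $q=\infty$ (via the isometry $\iota$ already constructed). For $1<p\le\infty$ (so $1\le q<\infty$) the dual is $L^{p}(\tilde U,\mu_q)$; undoing the rescaling shows that it corresponds precisely to sequences $(F_\alpha)\subset L^p(U)$ with $\bigl(\sum_\alpha (\alpha!^{ps}/h^{|\alpha|p})\|F_\alpha\|_{L^p}^p\bigr)^{1/p}<\infty$, and this quantity is the dual norm (in the limiting case $p=\infty$ the $\ell^p$--sum degenerates to a supremum). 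For $p=1$ (so $q=\infty$), the Riesz representation theorem identifies $(\mathcal{C}_0(\tilde U))'$ with finite Radon measures on $\tilde U$, which split as sequences $(F_\alpha)\subset(\mathcal{C}_0(U))'$ with $\sum_\alpha (\alpha!^s/h^{|\alpha|})\|F_\alpha\|_{(\mathcal{C}_0(U))'}<\infty$, as required by (\ref{22}).

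The ``moreover'' clause for a bounded subset $B$ is automatic: Hahn--Banach preserves norms, so equicontinuity of $B$ transfers to equicontinuity of the family of extensions, yielding a uniform bound on the representing sequences through the explicit dual-norm formulas. For the converse, given a coefficient sequence satisfying the stated summability, I would estimate, for $\varphi\in\DD^{(s)}_{L^q,h}(U)$,
\[
\sum_\alpha|\langle D^\alpha F_\alpha,\varphi\rangle|\le \sum_\alpha \|F_\alpha\|_{L^p}\|D^\alpha\varphi\|_{L^q}=\sum_\alpha\frac{\alpha!^s}{h^{|\alpha|}}\|F_\alpha\|_{L^p}\cdot\frac{h^{|\alpha|}}{\alpha!^s}\|D^\alpha\varphi\|_{L^q},
\]
and apply H\"older's inequality on $\NN^d$ with exponents $p,q$ (with the analogous $\ell^1$--$\ell^\infty$ estimate for $p=1$, total variation replacing $\|\cdot\|_{L^p}$) to dominate this by $C\|\varphi\|_{\DD^{(s)}_{L^q,h}(U)}$. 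Applying the same bound to tails shows that the partial sums of $\sum_\alpha D^\alpha F_\alpha$ are Cauchy in the strong dual topology, giving the claimed absolute convergence.

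The main delicacy will lie in the endpoint cases $p\in\{1,\infty\}$. For $p=\infty$ (so $q=1$) one must check that the rescaling correctly turns the essential supremum on $L^\infty(\tilde U,\mu_1)$ into $\sup_\alpha(\alpha!^s/h^{|\alpha|})\|F_\alpha\|_{L^\infty}$, which is the $p=\infty$ reading of (\ref{21}). For $p=1$, the critical point is that $Y_{h,L^\infty}$ is isomorphic to the $\mathcal{C}_0$--space $\mathcal{C}_0(\tilde U)$ rather than to an abstract $L^\infty$ space --- this is precisely where Lemma~\ref{10} and the isometry $\iota$ do essential work --- so that its dual really is a space of Radon measures and the Riesz theorem applies.
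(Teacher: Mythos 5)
Your proposal is correct and follows essentially the same route as the paper: embed $\DD^{(s)}_{L^q,h}(U)$ as a closed subspace of $Y_{h,L^q}$ via $\varphi\mapsto((-D)^{\alpha}\varphi)_{\alpha}$, extend $T$ by Hahn--Banach (norm-preserving, which also handles the bounded-set clause), identify the dual of $L^q(\tilde U,\mu_q)$ for $1\le q<\infty$ and of $\mathcal{C}_0(\tilde U)$ via Riesz for $q=\infty$, undo the rescaling to obtain the $F_{\alpha}$, and prove the converse by the H\"older estimate on the tails. Your remarks on the endpoint cases (the supremum reading of the norm for $p=\infty$, and the role of Lemma~\ref{10} and the isometry $\iota$ for $p=1$) match exactly what the paper does.
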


\begin{proof} Let $Y_{h,L^q}$ be as in the above discussion. Extend $T$ by the Hahn-Banach theorem to a continuous functional on $Y_{h,L^q}$ and denote it again by $T$, for $1\leq q\leq \infty$. For $q=\infty$, $\tilde{T}=T\circ\iota$ is a functional on $\mathcal{C}_0(\tilde{U})$. Then, for $1< p\leq\infty$, there exists $g\in L^p(\tilde{U},\mu_q)$ such that $\ds T\left((\psi_{\alpha})_{\alpha\in\NN^d}\right)=\int_{\tilde{U}}(\psi_{\alpha})_{\alpha\in\NN^d}gd\mu_q$, $(\psi_{\alpha})_{\alpha\in\NN^d}\in Y_{h,L^q}$. For $p=1$, there exists $g\in \left(\mathcal{C}_0(\tilde{U})\right)'$ such that $\ds \tilde{T}(\psi)=\int_{\tilde{U}}\psi dg$, for $\psi\in \mathcal{C}_0(\tilde{U})$. Hence, for $(\psi_{\alpha})_{\alpha\in\NN^d}\in Y_{h,L^{\infty}}$, we have
\beqs
T\left((\psi_{\alpha})_{\alpha\in\NN^d}\right)=\tilde{T}\left((\tilde{\psi}_{\alpha})_{\alpha\in\NN^d}\right)= \int_{\tilde{U}}(\tilde{\psi}_{\alpha})_{\alpha\in\NN^d}dg,
\eeqs
where $\ds(\tilde{\psi}_{\alpha})_{\alpha}=\iota^{-1}\left((\psi_{\alpha})_{\alpha}\right)= \left(\frac{h^{|\alpha|}}{\alpha!^s}\psi_{\alpha}\right)_{\alpha}$. Put $\ds F_{\alpha}=\frac{h^{|\alpha|q}}{\alpha!^{qs}}g_{|U_{\alpha}}$, for $1\leq q<\infty$. For $q=\infty$, put $\ds F_{\alpha}=\frac{h^{|\alpha|}}{\alpha!^s}g_{|U_{\alpha}}$. Then $F_{\alpha}\in L^p(U)$, for $1\leq q<\infty$, respectively $F_{\alpha}\in \left(\mathcal{C}_0(U)\right)'$ for $q=\infty$. Moreover, for $1< q<\infty$,
\beqs
\sum_{\alpha\in\NN^d}\frac{\alpha!^{ps}}{h^{|\alpha|p}}\|F_{\alpha}\|_{L^p(U)}^p= \sum_{\alpha\in\NN^d}\frac{h^{|\alpha|q}}{\alpha!^{qs}}\left\|g_{|U_{\alpha}}\right\|_{L^p(U)}^p =\|g\|_{L^p(\tilde{U},\mu_q)}^p<\infty.
\eeqs
Also, it is easy to verify that, for $q=1$, $\ds \sup_{\alpha}\frac{\alpha!^s}{h^{|\alpha|}}\|F_{\alpha}\|_{L^{\infty}(U)}=\|g\|_{L^{\infty}(\tilde{U},\mu_1)}<\infty$. For $q=\infty$ we have
\beqs
\sum_{\alpha\in\NN^d}\frac{\alpha!^s}{h^{|\alpha|}}\|F_{\alpha}\|_{\left(\mathcal{C}_0(U)\right)'}= \sum_{\alpha\in\NN^d}\left\|g_{|U_{\alpha}}\right\|_{\left(\mathcal{C}_0(U)\right)'}=\|g\|_{\left(\mathcal{C}_0(\tilde{U})\right)'} <\infty,
\eeqs
where in the second equality we used that $\left\|g_{|U_{\alpha}}\right\|_{\left(\mathcal{C}_0(U)\right)'}= \left|g_{|U_{\alpha}}\right|(U_{\alpha})=|g|(U_{\alpha})$ (we denote by $|g|$ the total variation of the measure $g$ and similarly for $g_{|U_{\alpha}}$). Moreover
\beqs
T\left((\psi)_{\alpha\in\NN^d}\right)=\sum_{\alpha\in\NN^d}\int_{U}\psi_{\alpha}(x)F_{\alpha}(x)dx,
\eeqs
for $1\leq q<\infty$. For $q=\infty$ we have
\beqs
T\left((\psi_{\alpha})_{\alpha\in\NN^d}\right)=\int_{\tilde{U}}(\tilde{\psi}_{\alpha})_{\alpha\in\NN^d}dg=\sum_{\alpha\in\NN^d} \frac{\alpha!^s}{h^{|\alpha|}}\int_U\tilde{\psi}_{\alpha}dF_{\alpha}=\sum_{\alpha\in\NN^d}\int_U\psi_{\alpha}dF_{\alpha}.
\eeqs
So, for $1\leq q<\infty$, if $\varphi\in \DD^{(s)}_{L^q,h}(U)$, we obtain
\beqs
\langle T,\varphi\rangle=\sum_{\alpha\in\NN^d}\int_{U}(-D)^{\alpha}\varphi(x)F_{\alpha}(x)dx= \sum_{\alpha\in\NN^d}\langle D^{\alpha}F_{\alpha},\varphi\rangle.
\eeqs
Similarly, $\langle T,\varphi\rangle=\sum_{\alpha}\langle D^{\alpha}F_{\alpha},\varphi\rangle$ when $q=\infty$. Moreover, by these calculations, it follows that for $1\leq q<\infty$
\beqs
\sum_{\alpha\in\NN^d}\left|\langle D^{\alpha}F_{\alpha},\varphi\rangle\right|\leq \left(\sum_{\alpha\in\NN^d} \frac{\alpha!^{ps}}{h^{|\alpha|p}}\|F_{\alpha}\|_{L^p(U)}^p\right)^{1/p} \left(\sum_{\alpha\in\NN^d}\frac{h^{|\alpha|q}\left\|D^{\alpha}\varphi\right\|_{L^q(U)}^q} {\alpha!^{qs}}\right)^{1/q}.
\eeqs
Hence the partial sums of $\sum_{\alpha}D^{\alpha}F_{\alpha}$ converge absolutely in $\DD'^{(s)}_{L^p,h}(U)$, when $1<p\leq \infty$. When $p=1$, the proof that the partial sums of $\sum_{\alpha} D^{\alpha} F_{\alpha}$ converge absolutely in $\DD'^{(s)}_{L^1,h}(U)$ is similar and we omit it. If $B$ is a bounded subset of $\DD'^{(s)}_{L^p,h}(U)$, by the Hahn-Banach theorem it can be extended to a bounded set $B_1$ in $Y'_{h,L^q}$, for $1\leq q<\infty$, resp. to a bounded set $B_1$ in $\mathcal{C}_0(\tilde{U})$ for $q=\infty$ ($\iota$ is an isometry). Hence, there exists $C>0$ independent of $T\in B_1$ and for each $T\in B_1$ there exists $g\in L^p(\tilde{U},\mu_q)$, for $1<p\leq\infty$, resp. $g\in\left(\mathcal{C}_0(\tilde{U})\right)'$, for $p=1$, such that $\|g\|_{L^p(\tilde{U})}\leq C$, resp. $\|g\|_{\left(\mathcal{C}_0(\tilde{U})\right)'}\leq C$. If we define $F_{\alpha}$ as above one obtains (\ref{21}), resp. (\ref{22}), with the desired uniform estimate independent of $T\in B$.\\
\indent The last part of the proposition is easy and we omit it.
\end{proof}

\section{Ultradistribution spaces}

\subsection{Beurling type test spaces}
For $1\leq p\leq \infty$, we define locally convex spaces (from now on abbreviated as l.c.s.) $\ds \BB^{(s)}_{L^p}(U)=\lim_{\substack{\longleftarrow\\ h\rightarrow \infty}}\DD^{s}_{L^p,h}(U)$. Then $\BB^{(s)}_{L^p}(U)$ is a $(F)$-space. Denote by $\DD^{(s)}_{L^p}(U)$ the closure of $\DD^{(s)}(U)$ in $\BB^{(s)}_{L^p}(U)$ for $1\leq p<\infty$ and $\dot{\BB}^{(s)}(U)$ the closure of $\DD^{(s)}(U)$ in $\BB^{(s)}_{L^{\infty}}(U)$. Hence, when $U=\RR^d$, these spaces coincide with the spaces $\DD^{(s)}_{L^p}(\RR^d)$, for $1\leq p<\infty$, resp. $\dot{\BB}^{(s)}$ defined in \cite{PilipovicT}. All of these spaces are $(F)$-spaces as well as  $\ds X_{L^p}=\lim_{\substack{\longleftarrow\\ h\rightarrow \infty}} \DD^{(s)}_{L^p,h}(U)$ $1\leq p\leq \infty$.

\begin{lem} Let $X_{L^p}$ be as above and $1\leq p\leq \infty$.
\begin{itemize}
\item[$i)$] $\DD^{(s)}(U)$ is dense in $X_{L^p}$.
\item[$ii)$] $X_{L^p}$ is a closed subspace of $\BB^{(s)}_{L^p}(U)$ and the topology of $X_{L^p}$ is the same as the induced one from $\BB^{(s)}_{L^p}(U)$. Hence $X_{L^p}$ and $\DD^{(s)}_{L^p}(U)$, for $1\leq p<\infty$, resp. $X_{L^{\infty}}$ and $\dot{\BB}^{(s)}(U)$ when $p=\infty$, are isomorphic l.c.s.
\end{itemize}
\end{lem}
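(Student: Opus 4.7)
The plan rests on a single structural observation: since $\DD^{(s)}_{L^p,h}(U)$ is by construction a closed linear subspace of $\DD^{s}_{L^p,h}(U)$ endowed with the restricted norm, the projective limits $X_{L^p}$ and $\BB^{(s)}_{L^p}(U)$ are defined by the \emph{same} family of seminorms $\{\|\cdot\|_{\DD^{s}_{L^p,h}}:h>0\}$, only over different ambient linear sets of smooth functions. Moreover, since these norms are monotone non-decreasing in $h$, a fundamental system of $0$-neighbourhoods in either projective limit is furnished by single-norm balls $V_{h,\varepsilon}=\{\psi:\|\psi\|_{\DD^{s}_{L^p,h}}<\varepsilon\}$ indexed by $h>0$, $\varepsilon>0$. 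This reduction will do almost all of the work.

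For (i), I would take $\varphi\in X_{L^p}$ and a basic neighbourhood $\varphi+V_{h,\varepsilon}$. Since $\varphi\in\DD^{(s)}_{L^p,h}(U)$ and, by definition, this space is the closure of $\DD^{(s)}(U)$ in $\DD^{s}_{L^p,h}(U)$, I can pick $\chi\in\DD^{(s)}(U)$ with $\|\varphi-\chi\|_{\DD^{s}_{L^p,h}}<\varepsilon$; this is density of $\DD^{(s)}(U)$ in $X_{L^p}$. For (ii), the algebraic inclusion $X_{L^p}\subseteq\BB^{(s)}_{L^p}(U)$ and the coincidence of the two topologies are immediate from the structural observation, because the generating seminorms literally agree on $X_{L^p}$. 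For closedness, I pick $\varphi_n\in X_{L^p}$ with $\varphi_n\to\varphi$ in $\BB^{(s)}_{L^p}(U)$; then for each fixed $h>0$ we have $\varphi_n\to\varphi$ in $\DD^{s}_{L^p,h}(U)$, and since $\DD^{(s)}_{L^p,h}(U)$ is closed there by definition, $\varphi\in\DD^{(s)}_{L^p,h}(U)$ for every $h$, i.e.\ $\varphi\in X_{L^p}$.

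The final identification is then a two-line closure argument. Let $Y$ denote the closure of $\DD^{(s)}(U)$ in $\BB^{(s)}_{L^p}(U)$, so that $Y=\DD^{(s)}_{L^p}(U)$ for $1\leq p<\infty$ and $Y=\dot{\BB}^{(s)}(U)$ for $p=\infty$. From $\DD^{(s)}(U)\subseteq X_{L^p}$ and the closedness part of (ii) one gets $Y\subseteq X_{L^p}$, while (i) together with the fact that the topology of $X_{L^p}$ is the induced topology from $\BB^{(s)}_{L^p}(U)$ yields $X_{L^p}\subseteq Y$. I do not expect a genuine obstacle here; the only point that deserves care is the use of monotonicity in $h$ to replace a generic basic neighbourhood in the projective limit by a single-norm ball, which is precisely what allows density in one Banach factor $\DD^{(s)}_{L^p,h}(U)$ to propagate to density in the Fr\'echet space $X_{L^p}$.
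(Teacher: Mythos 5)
Your proof is correct, and for part (i) and the closedness claim in (ii) it follows the same factor-by-factor route as the paper: density of $\DD^{(s)}(U)$ in each $\DD^{(s)}_{L^p,h}(U)$ propagates to the projective limit, and closedness of each $\DD^{(s)}_{L^p,h}(U)$ in $\DD^{s}_{L^p,h}(U)$ forces any $\BB^{(s)}_{L^p}(U)$-limit of a sequence from $X_{L^p}$ to lie in every $\DD^{(s)}_{L^p,h}(U)$, hence in $X_{L^p}$. Where you genuinely diverge is the identification of the two topologies on $X_{L^p}$: the paper obtains it from the open mapping theorem for Fr\'echet spaces (a continuous bijection of $X_{L^p}$ onto its closed image in $\BB^{(s)}_{L^p}(U)$), whereas you observe directly that the projective-limit topology and the subspace topology are generated by the same seminorms $\|\cdot\|_{\DD^{s}_{L^p,h}}$ restricted to $X_{L^p}$, using monotonicity in $h$ to reduce arbitrary basic neighbourhoods to single-norm balls. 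Your route is more elementary, and it also pays off at the end: you explicitly derive the set equality of $X_{L^p}$ with the closure of $\DD^{(s)}(U)$ in $\BB^{(s)}_{L^p}(U)$ from (i), closedness, and the topology identification, a step the paper's proof leaves implicit in the word ``Hence''. The only point worth spelling out in a written version is that $\DD^{(s)}(U)\subseteq\DD^{s}_{L^p,h}(U)$ for every $h>0$, so that $\DD^{(s)}(U)\subseteq X_{L^p}$ in the first place; this is routine from the defining estimates on $\DD^{(s)}_K$ and is taken for granted by the paper as well.
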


\begin{proof} Since $\DD^{(s)}(U)$ is dense in each $\DD^{(s)}_{L^p,h}(U)$ it follows that $\DD^{(s)}(U)\subseteq X_{L^p}$ and it is dense in $X_{L^p}$. The proof of $i)$ is complete. To prove $ii)$ note that $X_{L^p}\subseteq \BB^{(s)}_{L^p}(U)$. Let $\varphi_j$, $j\in\NN$, be a sequence in $X_{L^p}$ which converges to $\varphi\in \BB^{(s)}_{L^p}(U)$ in the topology of $\BB^{(s)}_{L^p}(U)$. Then $\varphi_j$ converges to $\varphi$ in $\DD^{s}_{L^p,h}(U)$ for each $h$. But $\varphi_j\in \DD^{(s)}_{L^p,h}(U)$, $j\in\NN$ and $\DD^{(s)}_{L^p,h}(U)$ is a closed subspace of $\DD^{s}_{L^p,h}(U)$ with the same topology. It follows that $\varphi\in \DD^{(s)}_{L^p,h}(U)$ and $\varphi_j$ converges to $\varphi$ in $\DD^{(s)}_{L^p,h}(U)$ for each $h$. Hence $\varphi\in X_{L^p}$. Moreover, since the inclusion $X_{L^p}\rightarrow \BB^{(s)}_{L^p}(U)$ is obviously continuous and $X_{L^p}$ and $\BB^{(s)}_{L^p}(U)$ are $(F)$-spaces and the image of $X_{L^p}$ under the inclusion is closed subspace of $\BB^{(s)}_{L^p}(U)$ by the open mapping theorem it follows that $X_{L^p}$ is isomorphic with its image under this inclusion (isomorphic as l.c.s.).
\end{proof}

By the above lemma we obtain that $\ds \DD^{(s)}_{L^p}(U)=\lim_{\substack{\longleftarrow\\ h\rightarrow \infty}} \DD^{(s)}_{L^p,h}(U)$, for $1\leq p<\infty$ and $\ds \dot{\BB}^{(s)}(U)=\lim_{\substack{\longleftarrow\\ h\rightarrow \infty}} \DD^{(s)}_{L^{\infty},h}(U)$, for $p=\infty$ and the projective limits are reduced. For $1< p\leq \infty$, denote by $\DD'^{(s)}_{L^p}(U)$ the strong dual of $\DD^{(s)}_{L^q}(U)$. Denote by $\DD'^{(s)}_{L^1}(U)$ the strong dual of $\dot{\BB}^{(s)}(U)$. Since $\DD^{(s)}(U)$ is continuously and densely injected into $\DD^{(s)}_{L^q}(U)$, for $1\leq q<\infty$ and into $\dot{\BB}^{(s)}(U)$, $\DD'^{(s)}_{L^p}(U)$ are continuously injected into $\DD'^{(s)}(U)$, for $1\leq p\leq \infty$. One easily verifies that ultradifferential operators of class $(s)$ act continuously on $\DD^{(s)}_{L^p}(U)$, for $1\leq p<\infty$ and on $\dot{\BB}^{(s)}(U)$. Hence they act continuously on $\DD'^{(s)}_{L^p}(U)$, for $1\leq p\leq \infty$. For $1<p<\infty$, since all $\DD^{(s)}_{L^p,h}(U)$ are reflexive $(B)$-spaces, the inclusion $\DD^{(s)}_{L^p,h_2}(U)\rightarrow \DD^{(s)}_{L^p,h_1}(U)$, for $h_2>h_1$ is weakly compact mapping, hence $\DD^{(s)}_{L^p}(U)$ is a $(FS^*)$-space, in particular it is reflexive.\\
\indent From now on we suppose that $U$ is bounded open set in $\RR^d$.

\begin{prop}\label{50}
Let $1\leq p<\infty$ and $h_1>h$. We have the continuous inclusions $\DD^{(s)}_{L^{\infty},h_1}(U)\rightarrow \DD^{(s)}_{L^p,h}(U)$ and $\DD^{(s)}_{L^p,2^sh}(U)\rightarrow \DD^{(s)}_{L^{\infty},h}(U)$. In particular, the spaces $\DD^{(s)}_{L^p}(U)$, $1\leq p<\infty$ and $\dot{\BB}^{(s)}(U)$ are isomorphic among each other.
\end{prop}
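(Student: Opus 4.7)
The plan is to establish the two continuous inclusions via direct norm estimates, and then deduce the isomorphism statement from the cofinality of the resulting inclusions inside the projective limits defining $\DD^{(s)}_{L^p}(U)$ and $\dot{\BB}^{(s)}(U)$.

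\emph{First inclusion.} Since $U$ is bounded, $L^{\infty}(U)\hookrightarrow L^p(U)$ with norm $|U|^{1/p}$. If $\varphi\in \DD^{(s)}_{L^{\infty},h_1}(U)$, write $M=\sup_{\alpha}h_1^{|\alpha|}\|D^{\alpha}\varphi\|_{L^{\infty}(U)}/\alpha!^s$. Then
\[
\sum_{\alpha\in\NN^d}\frac{h^{p|\alpha|}\|D^{\alpha}\varphi\|_{L^p(U)}^p}{\alpha!^{ps}}\leq |U|\,M^p\sum_{\alpha\in\NN^d}\bigl(h/h_1\bigr)^{p|\alpha|},
\]
and the last sum is finite since $h<h_1$. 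This gives continuity of $\DD^{s}_{L^{\infty},h_1}(U)\to \DD^{s}_{L^p,h}(U)$. Approximating $\varphi\in \DD^{(s)}_{L^{\infty},h_1}(U)$ by test functions $\varphi_j\in \DD^{(s)}(U)$, the Cauchy sequence $(\varphi_j)$ is then Cauchy also in $\DD^{s}_{L^p,h}(U)$, so $\varphi\in \DD^{(s)}_{L^p,h}(U)$.

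\emph{Second inclusion.} This is the main step. For $\varphi\in\DD^{(s)}(U)$, extend $\varphi$ by zero to $\RR^d$; choose $m\in\ZZ_+$ with $mp>d$, so that the Sobolev embedding $W^{m,p}(\RR^d)\hookrightarrow L^{\infty}(\RR^d)$ gives
\[
\|D^{\alpha}\varphi\|_{L^{\infty}(U)}\leq C\sum_{|\beta|\leq m}\|D^{\alpha+\beta}\varphi\|_{L^p(U)},\qquad \alpha\in\NN^d,
\]
with $C=C(d,p,m)$ independent of $\alpha$. Using $(\alpha+\beta)!^s\leq 2^{s|\alpha+\beta|}(\alpha!\beta!)^s$, one estimates termwise
\[
\frac{h^{|\alpha|}\|D^{\alpha+\beta}\varphi\|_{L^p(U)}}{\alpha!^s}\leq \beta!^{s}h^{-|\beta|}\sup_{\gamma}\frac{(2^sh)^{|\gamma|}\|D^{\gamma}\varphi\|_{L^p(U)}}{\gamma!^s},
\]
and the sup in $\gamma$ is controlled by the $\DD^{s}_{L^p,2^sh}(U)$-norm of $\varphi$ (since in $\ell^p$ the supremum is dominated by the $\ell^p$-norm). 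Summing over $|\beta|\leq m$ yields
\[
\sup_{\alpha}\frac{h^{|\alpha|}\|D^{\alpha}\varphi\|_{L^{\infty}(U)}}{\alpha!^{s}}\leq C'\|\varphi\|_{\DD^{s}_{L^p,2^sh}(U)},
\]
so the inclusion $\DD^{s}_{L^p,2^sh}(U)\supseteq \DD^{(s)}(U)\to \DD^{s}_{L^{\infty},h}(U)$ is continuous. Density of $\DD^{(s)}(U)$ in $\DD^{(s)}_{L^p,2^sh}(U)$ and the argument as in the first step (using that on a bounded $U$ one has $L^{\infty}(U)\subseteq L^p(U)$ to identify the limits) extend this to all of $\DD^{(s)}_{L^p,2^sh}(U)$ into $\DD^{(s)}_{L^{\infty},h}(U)$.

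\emph{Isomorphism of projective limits.} Combining the two inclusions, for every $h>0$ and every $1\leq p<\infty$ there exist $h',h''>0$ with
\[
\DD^{(s)}_{L^{\infty},h'}(U)\hookrightarrow \DD^{(s)}_{L^p,h}(U)\hookrightarrow \DD^{(s)}_{L^{\infty},h''}(U),
\]
and similarly with the roles of $p$ and $\infty$ (or of two different $p$'s) interchanged. Hence the defining projective spectra of $\DD^{(s)}_{L^p}(U)$ (for $1\leq p<\infty$) and of $\dot{\BB}^{(s)}(U)$ are mutually cofinal, so the projective limits coincide topologically and the spaces are isomorphic as l.c.s.

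The main difficulty is the second inclusion: the space $\DD^{s}_{L^{\infty},h}(U)$ controls each derivative in sup-norm, but the information coming from $\DD^{s}_{L^p,h'}(U)$ is a summability condition in $\alpha$. The use of the Sobolev embedding on $\RR^d$ (after extension by zero) together with the multinomial estimate $(\alpha+\beta)!^s\leq 2^{s|\alpha+\beta|}(\alpha!\beta!)^s$ is what converts the $\ell^p$-type data into uniform bounds in $\alpha$, while the loss in $h$ is compensated by the factor $2^s$ on the source side.
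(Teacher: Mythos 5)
Your proposal is correct and follows essentially the same route as the paper: the first inclusion via $L^\infty(U)\hookrightarrow L^p(U)$ on the bounded set $U$ together with the convergent geometric series in $(h/h_1)^{|\alpha|}$, and the second via the Sobolev embedding (with $mp>d$) combined with the estimate $(\alpha+\beta)!^s\le 2^{s(|\alpha|+|\beta|)}\alpha!^s\beta!^s$, which is exactly where the factor $2^s$ in the index enters in the paper as well. The only cosmetic differences are that you run the Sobolev step on $\DD^{(s)}(U)$ after extension by zero and then pass to the closure by density, whereas the paper notes directly that $D^{\alpha}\varphi\in W^{m,p}_0(U)$ for $\varphi$ in the closure; both are fine.
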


\begin{proof} Let $1\leq p<\infty$ and $\varphi\in\DD^{(s)}_{L^p,h}(U)$. It is obvious that for each $\alpha\in\NN^d$, $D^{\alpha}\varphi\in W^{m,p}_0(U)$, for any $m\in\ZZ_+$. Hence, by the Sobolev imbedding theorem it follows that for each $\alpha\in\NN^d$, $D^{\alpha}\varphi$ extends to a uniformly continuous function on $\overline{U}$. Now, let $\varphi\in\DD^{s}_{L^{\infty},h_1}(U)$. Then
\beqs
\left(\sum_{\alpha\in\NN^d}\frac{h^{p|\alpha|}\left\|D^{\alpha}\varphi\right\|_{L^p(U)}^p}{\alpha!^{ps}}\right)^{1/p} &\leq& |U|^{1/p} \left(\sum_{\alpha\in\NN^d}\frac{h^{p|\alpha|}h_1^{p|\alpha|}\left\|D^{\alpha}\varphi\right\|_{L^{\infty}(U)}^p} {h_1^{p|\alpha|}\alpha!^{ps}}\right)^{1/p}\\
&\leq& C|U|^{1/p} \sup_{\alpha\in\NN^d}\frac{h_1^{|\alpha|}\left\|D^{\alpha}\varphi\right\|_{L^{\infty}(U)}}{\alpha!^s}.
\eeqs
We obtain that the inclusion $\DD^{s}_{L^{\infty},h_1}(U)\rightarrow\DD^{s}_{L^p,h}(U)$ is continuous. Moreover, if $\varphi\in \DD^{(s)}_{L^{\infty},h_1}(U)$, then there exist $\varphi_j\in\DD^{(s)}(U)$, $j\in\ZZ_+$, such that $\varphi_j\rightarrow \varphi$, as $j\rightarrow \infty$, in $\DD^{s}_{L^{\infty},h_1}(U)$. But then $\varphi_j\rightarrow \varphi$, as $j\rightarrow \infty$, in $\DD^{s}_{L^p,h}(U)$. Hence, $\DD^{(s)}_{L^{\infty},h_1}(U)$ is continuously injected into $\DD^{(s)}_{L^p,h}(U)$. It follows that for each $\varphi\in\DD^{(s)}_{L^{\infty},h_1}(U)$, $\alpha\in\NN^d$, $D^{\alpha}\varphi$ can be extended to a uniformly continuous function on $\overline{U}$. Let $\varphi\in \DD^{(s)}_{L^p,2^sh}(U)$. Fix $m\in\ZZ_+$, such that $mp>d$. Denote by $\ds C_1=\max_{|\alpha|\leq m} \alpha!^s/h^{|\alpha|}$. By the Sobolev imbedding theorem we have
\beqs
\frac{h^{|\beta|}\|D^{\beta}\varphi\|_{L^{\infty}(U)}}{\beta!^s}&\leq& C'\frac{h^{|\beta|}}{\beta!^s}\left(\sum_{|\alpha|\leq m} \|D^{\alpha+\beta}\varphi\|_{L^p(U)}^p\right)^{1/p}\\
&\leq& C'\left(\sum_{|\alpha|\leq m} \frac{h^{(|\alpha|+|\beta|)p}\alpha!^{ps}}{\beta!^{ps}\alpha!^{ps} h^{|\alpha|p}} \|D^{\alpha+\beta}\varphi\|_{L^p(U)}^p\right)^{1/p}\\
&\leq& C'C_1\left(\sum_{|\alpha|\leq m} \frac{(2^sh)^{(|\alpha|+|\beta|)p}}{(\alpha+\beta)!^{ps}} \|D^{\alpha+\beta}\varphi\|_{L^p(U)}^p\right)^{1/p}\\
&\leq& C'C_1\left(\sum_{\gamma \in \NN^d} \frac{(2^sh)^{|\gamma|p}}{\gamma!^{ps}} \|D^{\gamma}\varphi\|_{L^p(U)}^p\right)^{1/p}.
\eeqs
We obtain that $\DD^{(s)}_{L^p,2^sh}(U)$ is continuously injected in $\DD^{s}_{L^{\infty},h}(U)$. Moreover, if $\varphi_j\in\DD^{(s)}(U)$, $j\in\ZZ_+$, are such that $\varphi_j\rightarrow \varphi$, when $j\rightarrow \infty$, in $\DD^{(s)}_{L^p,2^sh}(U)$, then $\varphi_j\rightarrow \varphi$, when $j\rightarrow \infty$, in $\DD^{s}_{L^{\infty},h}(U)$. Hence, $\DD^{(s)}_{L^p,2^sh}(U)$ is continuously injected into $\DD^{(s)}_{L^{\infty},h}(U)$.
\end{proof}

Proposition \ref{50} implies that, we no longer need to distinguish the spaces $\DD^{(s)}_{L^p}(U)$ since they are all isomorphic to $\dot{\BB}^{(s)}(U)$. Hence their duals are all isomorphic to $\DD'^{(s)}_{L^1}(U)$.

\begin{prop}\label{60}
Let $U$ be bounded open subset of $\RR^d$.
\begin{itemize}
\item[$i)$] Let $h>0$ be fixed. Every element $\varphi$ of $\DD^{(s)}_{L^p,h}(U)$ for $1\leq p\leq\infty$, can be extended to $\mathcal{C}^{\infty}$ function on $\RR^d$ with support in $\overline{U}$. Moreover $\DD^{(s)}_{L^{\infty},h}(U)$ can be identified with a closed subspace of $\DD^{s,h}_{\overline{U}}$;
\item[$ii)$] $\dot{\BB}^{(s)}(U)$ can be identified with a closed subspace of $\DD^{(s)}_{\overline{U}}$;
\item[$iii)$] $\dot{\BB}^{(s)}(U)$ is a nuclear $(FS)$-spaces. Moreover, in the representation $\ds\dot{\BB}^{(s)}(U)=\lim_{\substack{\longleftarrow\\ h\rightarrow \infty}} \DD^{(s)}_{L^{\infty},h}(U)$, the linking inclusions in the projective limit $\DD^{(s)}_{L^{\infty},h_1}(U)\rightarrow\DD^{(s)}_{L^{\infty},h}(U)$ are compact for $h_1>h$.
\end{itemize}
\end{prop}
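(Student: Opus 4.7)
The plan is to prove (i) first and then derive (ii) and (iii) as formal consequences by pushing the extension-by-zero map through the defining projective systems, together with the known properties of $\DD^{(s)}_{\overline{U}}$. For (i), the main case is $p=\infty$, since for $1\leq p<\infty$ the continuous inclusion $\DD^{(s)}_{L^p,h}(U)\hookrightarrow\DD^{(s)}_{L^{\infty},h/2^s}(U)$ from Proposition~\ref{50} reduces everything to that case. Given $\varphi\in\DD^{(s)}_{L^{\infty},h}(U)$, I would choose approximants $\varphi_n\in\DD^{(s)}(U)$ with $\varphi_n\to\varphi$ in $\DD^{s}_{L^{\infty},h}(U)$; extending each $\varphi_n$ by zero yields $\tilde\varphi_n\in\DD^{(s)}(\RR^d)$ with $\supp\tilde\varphi_n\subseteq\overline{U}$. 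Since the extensions vanish outside $\overline{U}$, the Cauchy condition on $\{\varphi_n\}$ in $\DD^{s}_{L^{\infty},h}(U)$ translates into a Cauchy condition on $\{\tilde\varphi_n\}$ in $\DD^{s,h}_{\overline{U}}$ (matching the two weighted-supremum norms on the same compact set, with the convention on $h$ accounted for on each side). Completeness of $\DD^{s,h}_{\overline{U}}$ then produces a limit $\tilde\varphi$ there, and pointwise convergence forces $\tilde\varphi|_U=\varphi$, delivering both the $\mathcal{C}^{\infty}$ extension with support in $\overline{U}$ and the continuous embedding. Closedness is immediate, since the image is precisely the closure in $\DD^{s,h}_{\overline{U}}$ of the extensions by zero of $\DD^{(s)}(U)$.

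For (ii), I apply (i) uniformly in $h$: the resulting family of closed embeddings $\DD^{(s)}_{L^{\infty},h}(U)\hookrightarrow\DD^{s,h}_{\overline{U}}$ intertwines the linking inclusions on both sides, so passing to the projective limit over $h$ descends to a closed embedding $\dot{\BB}^{(s)}(U)\hookrightarrow\DD^{(s)}_{\overline{U}}$. For (iii), nuclearity of $\dot{\BB}^{(s)}(U)$ then follows from (ii), the classical fact (Komatsu~\cite{k91}) that $\DD^{(s)}_{\overline{U}}$ is nuclear for compact $\overline{U}$, and the stability of nuclearity under closed subspaces. For the compactness of the linking inclusion $\DD^{(s)}_{L^{\infty},h_1}(U)\to\DD^{(s)}_{L^{\infty},h}(U)$ with $h_1>h$, I would factor it through (i) and through the compact linking map in the projective system defining $\DD^{(s)}_{\overline{U}}$, a standard Arzel\`a--Ascoli fact for Banach spaces of ultradifferentiable functions on a fixed compact set. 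A bounded set has relatively compact image in the ambient Banach space on the right, and since $\DD^{(s)}_{L^{\infty},h}(U)$ sits in it as a closed subspace with the induced topology, relative compactness is inherited. Together with the Fr\'echet property established earlier, this yields the $(FS)$-assertion.

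The principal technical point is part (i): identifying the extension by zero as a genuine topological embedding into $\DD^{s,h}_{\overline{U}}$ and matching up the two similar but differently parametrized weighted norms for functions supported in $\overline{U}$. Once (i) is secured, both (ii) and (iii) become essentially formal manipulations of closed subspaces in projective limits, combined with the well-known nuclear $(FS)$-structure of $\DD^{(s)}_{\overline{U}}$.
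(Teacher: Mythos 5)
Your proposal is correct and follows essentially the same route as the paper's proof: part (i) via extension by zero of approximating sequences from $\DD^{(s)}(U)$ and completeness of $\DD^{s,h}_{\overline{U}}$, part (ii) by passing to the projective limits, and part (iii) by combining the closed embedding into the nuclear $(FS)$-space $\DD^{(s)}_{\overline{U}}$ with the compactness of the linking maps $\DD^{s,h_1}_{\overline{U}}\rightarrow\DD^{s,h}_{\overline{U}}$ (Komatsu's Proposition 2.2). The only cosmetic difference is that the paper records the extension map as an isometry to get closedness of the image, whereas you describe the image as the closure of the extended test functions; both amount to the same thing.
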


\begin{proof} To prove the first part of $i)$, note that by Proposition \ref{50}, $\DD^{(s)}_{L^p,h}(U)$ is continuously injected into $\DD^{(s)}_{L^{\infty},h/2^s}(U)$. Hence it is enough to prove it for $\DD^{(s)}_{L^{\infty},h}(U)$. Let $\varphi\in\DD^{(s)}_{L^{\infty},h}(U)$. Then there exist $\varphi_j\in\DD^{(s)}(U)$, $j\in\ZZ_+$, such that $\varphi_j\rightarrow\varphi$, as $j\rightarrow\infty$ in $\DD^{(s)}_{L^{\infty},h}(U)$. So for $\varepsilon>0$ there exists $j_0\in\ZZ_+$ such that for $j,k\geq j_0$, $j,k\in\ZZ_+$, we have $\ds\sup_{\alpha\in\NN^d}\frac{h^{|\alpha|}\left\|D^{\alpha}\varphi_k-D^{\alpha}\varphi_j\right\|_{L^{\infty}(U)}}{\alpha!^s}\leq \varepsilon$. Since all $\varphi_j$, $j\in\ZZ_+$, have compact support in $U$ and $\DD^{(s)}(U)\subseteq \DD^{s,h}_{\overline{U}}$ we obtain that
\beqs
\sup_{\alpha\in\NN^d}\frac{h^{|\alpha|}\left\|D^{\alpha}\varphi_k-D^{\alpha}\varphi_j\right\|_{L^{\infty}\left(\RR^d\right)}} {\alpha!^s}\leq \varepsilon
\eeqs
for all $j,k\geq j_0$, $j,k\in\ZZ_+$. Hence, $\varphi_j$ is a Cauchy sequence in the $(B)$-space $\DD^{s,h}_{\overline{U}}$ so it must converge to an element $\psi\in\DD^{s,h}_{\overline{U}}$. Hence $\psi(x)=\varphi(x)$, when $x\in U$ and obviously $\psi(x)=0$ when $x\in\RR^d\backslash U$ (since all $\varphi_j$, $j\in\ZZ_+$, have compact support in $U$). This proofs the first part of $i)$. To prove the second part, consider the mapping $\varphi\mapsto \widetilde{\varphi}$, $\DD^{(s)}_{L^{\infty},h}(U)\rightarrow \DD^{s,h}_{\overline{U}}$, where $\widetilde{\varphi}(x)=\varphi(x)$, when $x\in U$ and $\widetilde{\varphi}(x)=0$, when $x\in\RR^d\backslash U$. By the above discussion, this is well defined mapping. Moreover, one easily sees that it is an isometry, which completes the proof of $i)$. Observe that $ii)$ follows from $i)$ since $\ds\dot{\BB}^{(s)}(U)=\lim_{\substack{\longleftarrow\\ h\rightarrow \infty}} \DD^{(s)}_{L^{\infty},h}(U)$ and $\ds\DD^{(s)}_{\overline{U}}=\lim_{\substack{\longleftarrow\\ h\rightarrow \infty}} \DD^{s,h}_{\overline{U}}$. The first part of $iii)$ follows from $ii)$ since $\dot{\BB}^{(s)}(U)$ is a closed subspace of the nuclear $(FS)$-space $\DD^{(s)}_{\overline{U}}$ (Komatsu in \cite{k91} proves the nuclearity of $\DD^{(s)}_{\overline{U}}$ when $\overline{U}$ is regular compact set, but the proof is valid for general $\overline{U}$; the regularity of $\overline{U}$ is used by Komatsu \cite{k91} for the definition and nuclearity of $\EE^{(s)}(\overline{U})$). For the second part, by Proposition 2.2 of \cite{k91} the inclusion $\DD^{s,h_1}_{\overline{U}}\rightarrow\DD^{s,h}_{\overline{U}}$ is compact. Since $\DD^{(s)}_{L^{\infty},h_1}(U)$, resp. $\DD^{(s)}_{L^{\infty},h}(U)$, is closed subspace of $\DD^{s,h_1}_{\overline{U}}$, resp. $\DD^{s,h}_{\overline{U}}$, we obtain the compactness of the inclusion under consideration.
\end{proof}

\subsection{Weighted Beurling spaces of ultradistributions }
\begin{prop}\label{330}
Let $T\in\DD'^{(s)}_{L^1}(U)$. For every $1\leq p\leq \infty$ there exist $h,C>0$ and $F_{\alpha}\in \mathcal{C}(\overline{U})$, $\alpha\in\NN^d$, such that
\beq\label{331}
\left(\sum_{\alpha\in\NN^d} \frac{\alpha!^{ps}}{h^{|\alpha|p}}\|F_{\alpha}\|_{L^{\infty}(U)}^p\right)^{1/p}\leq C \mbox{ and } T=\sum_{\alpha\in\NN^d}D^{\alpha}F_{\alpha},
\eeq
where the last series converges absolutely in $\DD'^{(s)}_{L^1}(U)$. Moreover, if $B$ is a bounded subset of $\DD'^{(s)}_{L^1}(U)$ and $1\leq p\leq \infty$, then there exist $h,C>0$ independent of $T\in B$ and for each $T\in B$ there exist $F_{\alpha}\in \mathcal{C}(\overline{U})$, $\alpha\in\NN^d$, such that (\ref{331}) holds.\\
\indent Conversely, for $1\leq p\leq \infty$, if $F_{\alpha}\in L^p(U)$, $\alpha\in\NN^d$, are such that $\ds\left(\sum_{\alpha\in\NN^d} \frac{\alpha!^{ps}}{h^{|\alpha|p}}\|F_{\alpha}\|_{L^p}^p\right)^{1/p}<\infty$ for some $h>0$ then the series $\ds\sum_{|\alpha|=0}^{\infty}D^{\alpha} F_{\alpha}$ converges absolutely in $\DD'^{(s)}_{L^p,h}(U)$ and hence also in $\DD'^{(s)}_{L^1}(U)$.
\end{prop}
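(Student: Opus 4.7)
The strategy is to reduce to Proposition \ref{20} via a duality argument, and then upgrade the $L^\infty$ coefficients it produces to continuous ones on $\overline U$ by iterated antidifferentiation on a box enclosing $\overline U$. Since $\DD'^{(s)}_{L^1}(U)$ is the strong dual of the Fr\'echet space $\dot\BB^{(s)}(U)$, the projective limit of $\DD^{(s)}_{L^\infty,h}(U)$ as $h\to\infty$, continuity of $T$ provides some $h_0>0$ with $T\in\DD'^{(s)}_{L^1,h_0}(U)$. Dualizing the second inclusion of Proposition \ref{50}, $\DD^{(s)}_{L^1,2^sh_0}(U)\hookrightarrow\DD^{(s)}_{L^\infty,h_0}(U)$, then places $T$ in $\DD'^{(s)}_{L^\infty,h_1}(U)$ with $h_1:=2^sh_0$. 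Proposition \ref{20} with $p=\infty$ produces $F_\alpha\in L^\infty(U)$ satisfying $T=\sum_\alpha D^\alpha F_\alpha$ and $\sup_\alpha\alpha!^sh_1^{-|\alpha|}\|F_\alpha\|_{L^\infty(U)}\leq C_1$.

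To gain $\mathcal C(\overline U)$-regularity I would enclose $\overline U$ in a closed box $Q=\prod_{i=1}^d[a_i,b_i]$, extend each $F_\alpha$ by zero to $Q$, and set
$$G_\alpha(x)=i^d\int_{a_1}^{x_1}\!\cdots\!\int_{a_d}^{x_d}F_\alpha(t)\,dt,\qquad x\in Q.$$
Each $G_\alpha$ is Lipschitz on $Q$ (hence continuous on $\overline U$) with $\|G_\alpha\|_{L^\infty(U)}\leq R^d\|F_\alpha\|_{L^\infty(U)}$ for $R=\max_i(b_i-a_i)$, and $D^{\mathbf 1}G_\alpha=F_\alpha$ in $\DD'^{(s)}(U)$ for $\mathbf 1=(1,\dots,1)$. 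Re-indexing $\beta=\alpha+\mathbf 1$ rewrites $T=\sum_\beta D^\beta H_\beta$ with $H_\beta\in\mathcal C(\overline U)$, and the elementary bound $\beta!/(\beta-\mathbf 1)!\leq|\beta|^d$ (for $\beta\geq\mathbf 1$) yields
$$\frac{\beta!^s\|H_\beta\|_{L^\infty(U)}}{h^{|\beta|}}\leq C'|\beta|^{sd}(h_1/h)^{|\beta|},$$
whose $\ell^p$ norm (or supremum, when $p=\infty$) is finite once $h$ is chosen sufficiently larger than $h_1$, proving (\ref{331}). For the uniform version, a bounded $B\subset\DD'^{(s)}_{L^1}(U)$ is equicontinuous on $\dot\BB^{(s)}(U)$ by the Banach-Steinhaus theorem, so a single $h_0$ works for all $T\in B$; since $R^d$ and the combinatorial factors depend only on $U$, the uniform part of Proposition \ref{20} then supplies a common $C$.

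The converse is immediate from the last statement of Proposition \ref{20}, which gives absolute convergence of $\sum D^\alpha F_\alpha$ in $\DD'^{(s)}_{L^p,h}(U)$, together with the continuous embedding $\DD'^{(s)}_{L^p,h}(U)\hookrightarrow\DD'^{(s)}_{L^1}(U)$ obtained by dualizing the continuous dense inclusion $\dot\BB^{(s)}(U)\hookrightarrow\DD^{(s)}_{L^q,h}(U)$, itself a consequence of Proposition \ref{50}. The main obstacle is the $\mathcal C(\overline U)$-regularity of the coefficients: Proposition \ref{20} only delivers $L^\infty$ representatives, so the antiderivative step is essential, and boundedness of $U$ is used twice--to form the enclosing box $Q$ and to obtain the $\alpha$-independent constant $R^d$. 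The spurious factor $|\beta|^{sd}$ arising from the derivative shift is absorbed by enlarging $h$, which is admissible since the statement allows $h$ to depend on $p$.
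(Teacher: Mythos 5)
Your argument is correct, and it diverges from the paper's proof at the key regularization step. The paper fixes each $1<p\leq\infty$ separately, places the (bounded set of) functional(s) into $\DD'^{(s)}_{L^p,h}(U)$ via the duals of the inclusions in Proposition \ref{50}, applies Proposition \ref{20} to get $\tilde{F}_{\alpha}\in L^p(U)$, and then produces continuous coefficients by convolving with a continuous fundamental solution $L$ of the iterated Laplacian $\Delta^d$, writing $\Delta^d=\sum_{\beta}c_{\beta}D^{\beta}$ and regrouping as $F_{\alpha}=\sum_{\beta\leq\alpha}c_{\beta}G_{\alpha-\beta}$; the case $p=1$ is then deduced at the end. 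You instead work only with the $p=\infty$ instance of Proposition \ref{20} and replace the fundamental-solution convolution by the explicit box primitive $G_{\alpha}(x)=i^d\int_{a_1}^{x_1}\cdots\int_{a_d}^{x_d}F_{\alpha}(t)\,dt$, so that the index shift is simply $\beta=\alpha+\mathbf{1}$ and the only combinatorial loss is the factor $\prod_i\beta_i\leq|\beta|^{d}$, absorbed by enlarging $h$; all finite $p$ then follow from the $\ell^{\infty}$ bound by geometric summability. Your route is more elementary (no fundamental solution of $\Delta^d$ and no Cauchy-product regrouping are needed), at the cost of passing through the $L^\infty$ representation rather than matching the exponent $p$ throughout; the paper's convolution argument has the mild advantage of handling all $1<p\leq\infty$ in a single uniform computation with the Hölder bound $\|G_{\alpha}\|_{L^{\infty}(U)}\leq C_1\|\tilde{F}_{\alpha}\|_{L^p(U)}$. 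Your treatment of the bounded-set statement (Banach--Steinhaus giving a single $h_0$, then the uniform constant from Proposition \ref{20}) and of the converse (last part of Proposition \ref{20} plus the dual of the dense inclusion $\dot{\BB}^{(s)}(U)\hookrightarrow\DD^{(s)}_{L^q,h}(U)$) coincides with the paper's. One small point worth writing out if you expand this: the identity $D^{\alpha+\mathbf{1}}G_{\alpha}=D^{\alpha}F_{\alpha}$, known a priori only against $\DD^{(s)}(U)$, transfers to $\dot{\BB}^{(s)}(U)$ by density and the continuity of both pairings, which is what legitimizes re-summing the series in $\DD'^{(s)}_{L^1}(U)$.
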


\begin{proof} We will prove first the second part of the proposition. If $F_{\alpha}\in L^p(U)$, $\alpha\in\NN^d$, are as above, the absolute convergence of $\ds\sum_{|\alpha|=0}^{\infty}D^{\alpha} F_{\alpha}$ in $\DD'^{(s)}_{L^p,h}(U)$ follows by proposition \ref{20} for $1<p\leq \infty$ and can be easily verified for $p=1$. By Proposition \ref{50}, $\dot{\BB}^{(s)}(U)$ is continuously and densely injected into $\DD^{(s)}_{L^q,h}(U)$, where $q$ is the conjugate of $p$, i.e. $p^{-1}+q^{-1}=1$ (the part about the denseness follows from the fact that $\DD^{(s)}(U)\subseteq \dot{\BB}^{(s)}(U)$ is dense in $\DD^{(s)}_{L^q,h}(U)$). Hence $\DD'^{(s)}_{L^p,h}(U)$ is continuously injected into $\DD'^{(s)}_{L^1}(U)$ and we obtain that $\ds\sum_{|\alpha|=0}^{\infty}D^{\alpha} F_{\alpha}$ converges absolutely in $\DD'^{(s)}_{L^1}(U)$.\\
\indent To prove the first part, we fix $1< p \leq \infty$ and let $q$ to be the conjugate of $p$. Since $\ds \dot{\BB}^{(s)}(U)=\lim_{\substack{\longleftarrow\\ h\rightarrow \infty}} \DD^{(s)}_{L^{\infty},h}(U)$ and the projective limit is reduced with compact linking mappings (cf. Proposition \ref{60}), $\ds \DD'^{(s)}_{L^1}(U)=\lim_{\substack{\longrightarrow\\ h\rightarrow \infty}} \DD'^{(s)}_{L^1,h}(U)$ as l.c.s., where the inductive limit is injective with compact linking mappings. If $B$ is bounded subset of $\DD'^{(s)}_{L^1}(U)$ there exists $h_1>0$ such that $B\subseteq \DD'^{(s)}_{L^1,h}(U)$ and is bounded there. By proposition \ref{50}, if we take $h=2^sh_1$, $\DD^{(s)}_{L^q,h}(U)$ is continuously injected into $\DD^{(s)}_{L^{\infty},h_1}(U)$. Obviously, $\DD^{(s)}_{L^q,h}(U)$ is dense in $\DD^{(s)}_{L^{\infty},h_1}(U)$ (since $\DD^{(s)}(U)$ is). We obtain that $\DD'^{(s)}_{L^1,h_1}(U)$ is continuously injected into $\DD'^{(s)}_{L^p,h}(U)$. Hence $B$ is a bounded subset of $\DD'^{(s)}_{L^p,h}(U)$. Now, by Proposition \ref{20}, for each $T\in B$ there exist $\tilde{F}_{\alpha}\in L^p(U)$, $\alpha\in\NN^d$, such that
\beqs
\left(\sum_{\alpha\in\NN^d} \frac{\alpha!^{ps}}{h^{p|\alpha|}}\|\tilde{F}_{\alpha}\|^p_{L^p(U)}\right)^{1/p}\leq C' \mbox{ and } T=\sum_{\alpha\in\NN^d}D^{\alpha}\tilde{F}_{\alpha}
\eeqs
and the constant $C'$ is the same for all $T\in B$. Let $L(x)\in\mathcal{C}\left(\RR^d\right)$ be a fundamental solution of $\Delta^d L=\delta$ ($\Delta$ is the Laplacian). Define $\ds G_{\alpha}(x)=\int_U L(x-y)\tilde{F}_{\alpha}(y)dy$, $\alpha\in\NN^d$. Obviously $G_{\alpha}\in\mathcal{C}(\overline{U})$, $\alpha\in\NN^d$ and $\|G_{\alpha}\|_{L^{\infty}(U)}\leq C_1 \|\tilde{F}_{\alpha}\|_{L^p(U)}$, for all $\alpha\in\NN^d$. Hence $\ds \left(\sum_{\alpha\in\NN^d} \frac{\alpha!^{ps}}{h^{p|\alpha|}}\|G_{\alpha}\|_{L^{\infty}(U)}^p\right)^{1/p}\leq C_2$ and $C_2$ is independent of $T\in B$. Let $\Delta^d=\sum_{\beta}c_{\beta}D^{\beta}$ and define $\ds F_{\alpha}=\sum_{\beta\leq \alpha}c_{\beta}G_{\alpha-\beta}$, $\alpha\in\NN^d$. The obviously $F_{\alpha}\in\mathcal{C}(\overline{U})$ for all $\alpha\in\NN^d$. Note that $c_{\beta}\neq 0$ only for finitely many $\beta\in\NN^d$. Put $\ds C_3=\sum_{\beta}\frac{\beta!^s}{h^{|\beta|}}\left|c_{\beta}\right|$. Then\\
\\
$\ds\left(\sum_{\alpha\in\NN^d} \frac{\alpha!^{ps}}{(2^{s+1}h)^{|\alpha|p}}\|F_{\alpha}\|_{L^{\infty}(U)}^p\right)^{1/p}$
\beqs
&\leq& \left(\sum_{\alpha\in\NN^d} \frac{1}{2^{|\alpha|p}} \left(\sum_{\beta\leq\alpha} \frac{(\alpha-\beta)!^s \beta!^s}{h^{|\alpha|-|\beta|}h^{|\beta|}}\left|c_{\beta}\right|\|G_{\alpha-\beta}\|_{L^{\infty}(U)}\right)^p \right)^{1/p}\\
&\leq& C_2C_3\left(\sum_{\alpha\in\NN^d}\frac{1}{2^{|\alpha|p}}\right)^{1/p}
\eeqs
and the last is independent of $T\in B$. Now one easily obtains that $T=\sum_{\alpha}D^{\alpha}F_{\alpha}$ which completes the first part of the proposition when $1<p\leq \infty$. Note that the case $p=1$ follows from this for any $\tilde{h}>h$.
\end{proof}

\section{Vector-valued spaces of ultradistributions}

Let now $E$ be a complete l.c.s. As we saw above, $\DD'^{(s)}_{L^1}(U)$ and $\DD'^{(s)}_{L^p,h}(U)$, $1\leq p\leq\infty$, are continuously injected in $\DD'^{(s)}(U)$. Following Komatsu \cite{k82}, (see also \cite{ku113}) we define the spaces $\DD'^{(s)}_{L^1}(U;E)$ and $\DD'^{(s)}_{L^p,h}(U;E)$, $1\leq p\leq \infty$, of $E$-valued ultradistributions of type $\DD'^{(s)}_{L^1}(U)$ and $\DD'^{(s)}_{L^p,h}(U)$ respectively, as
\beq
\DD'^{(s)}_{L^1}(U;E)&=&\DD'^{(s)}_{L^1}(U)\varepsilon E=\mathcal{L}_{\epsilon}\left(\left(\DD'^{(s)}_{L^1}(U)\right)'_c, E\right), \mbox{ resp.}\label{1397}\\
\DD'^{(s)}_{L^p,h}(U;E)&=&\DD'^{(s)}_{L^p,h}(U)\varepsilon E=\mathcal{L}_{\epsilon}\left(\left(\DD'^{(s)}_{L^p,h}(U)\right)'_c, E\right)\label{1399}.
\eeq
The subindex $c$ stands for the topology of compact convex circled convergence on the dual of $\DD'^{(s)}_{L^1}(U)$, resp. $\DD'^{(s)}_{L^p,h}(U)$, from the duality
\beqs
\left\langle \DD'^{(s)}_{L^1}(U),\left(\DD'^{(s)}_{L^1}(U)\right)'\right\rangle, \mbox{ resp. } \left\langle \DD'^{(s)}_{L^p,h}(U),\left(\DD'^{(s)}_{L^p,h}(U)\right)'\right\rangle.
\eeqs
If we denote by $\iota$, resp. $\iota_p$, the inclusion $\DD'^{(s)}_{L^1}(U)\rightarrow\DD'^{(s)}(U)$, resp. $\DD'^{(s)}_{L^p,h}(U)\rightarrow\DD'^{(s)}(U)$, then $\DD'^{(s)}_{L^1}(U;E)$, resp. $\DD'^{(s)}_{L^p,h}(U;E)$, is continuously injected into $\DD'^{(s)}(U;E)=\DD'^{(s)}(U)\varepsilon E=\mathcal{L}_b\left(\DD^{(s)}(U),E\right)$ by the mapping $\iota\,\varepsilon\, \mathrm{Id}$, resp. $\iota_p\,\varepsilon\,\mathrm{Id}$ (cf. \cite{k82}). In \cite{SchwartzV} is proved that when both spaces are complete. The same holds for their $\varepsilon$ tensor product. Hence, $\DD'^{(s)}_{L^1}(U;E)$ and $\DD'^{(s)}_{L^p,h}(U;E)$ are complete. Since $\DD'^{(s)}_{L^1}(U)$ and $\DD'^{(s)}_{L^p,h}(U)$ are barrelled (the former is a $(DFS)$-space as the strong dual of a $(FS)$-space, hence barrelled), every bounded subset of $\left(\DD'^{(s)}_{L^1}(U)\right)'_c$ or $\left(\DD'^{(s)}_{L^p,h}(U)\right)'_c$ is equicontinuous (and vice versa). Hence, the $\epsilon$ topology on the right hand sides of (\ref{1397}) and (\ref{1399}) is the same as the topology of bounded convergence. Moreover, since $\dot{\BB}^{(s)}(U)$ is a $(FS)$-space and $\DD'^{(s)}_{L^1}(U)$ is a $(DFS)$-space they are both Montel spaces. Hence $\DD'^{(s)}_{L^1}(U;E)=\mathcal{L}_b\left(\dot{\BB}^{(s)}(U), E\right)$. For $1<p<\infty$, $\DD'^{(s)}_{L^p,h}(U;E)=\mathcal{L}_b\left(\DD^{(s)}_{L^q,h}(U)_c, E\right)$, since $\DD^{(s)}_{L^q,h}(U)$ are reflexive, where $\DD^{(s)}_{L^q,h}(U)_c$ is the space $\DD^{(s)}_{L^q,h}(U)$ equipped with topology of compact convex circled convergence from the duality $\left\langle \DD^{(s)}_{L^q,h}(U),\DD'^{(s)}_{L^p,h}(U)\right\rangle$. Since $\dot{\BB}^{(s)}(U)$ is a nuclear $(FS)$-space (by Proposition \ref{60}) $\DD'^{(s)}_{L^1}(U)$ is a nuclear $(DFS)$-space and hence it satisfies the weak approximation property by Corollary 2 pg.110 of \cite{Schaefer} (for the definition of the weak approximation property see \cite{SchwartzV}). Hence Proposition 1.4 of \cite{k82} implies $\DD'^{(s)}_{L^1}(U;E)=\DD'^{(s)}_{L^1}(U)\varepsilon E\cong \DD'^{(s)}_{L^1}(U)\hat{\otimes} E$ where the $\pi$ and the $\epsilon$ topologies coincide on $\DD'^{(s)}_{L^1}(U)\hat{\otimes} E$ since $\DD'^{(s)}_{L^1}(U)$ is nuclear. Later we will need the following kernel theorem.

\begin{thm}\label{1310}
Let $U_1$ and $U_2$ be bounded open sets in $\RR^{d_1}_x$ and $\RR^{d_2}_y$ respectively. Then we have the following canonical isomorphisms of l.c.s.
\begin{itemize}
\item[$i)$] $\dot{\BB}^{(s)}(U_1)\hat{\otimes}\dot{\BB}^{(s)}(U_2)\cong\dot{\BB}^{(s)}(U_1\times U_2)$.
\item[$ii)$] $\DD'^{(s)}_{L^1}(U_1)\hat{\otimes}\DD'^{(s)}_{L^1}(U_2)\cong\DD'^{(s)}_{L^1}(U_1\times U_2)\cong \DD'^{(s)}_{L^1}(U_1)\varepsilon \DD'^{(s)}_{L^1}(U_2)\newline\cong\mathcal{L}_b\left(\dot{\BB}^{(s)}(U_1), \DD'^{(s)}_{L^1}(U_2)\right)\cong\DD'^{(s)}_{L^1}\left(U_1;\DD'^{(s)}_{L^1}(U_2)\right)\cong \DD'^{(s)}_{L^1}\left(U_2;\DD'^{(s)}_{L^1}(U_1)\right)$.
\end{itemize}
\end{thm}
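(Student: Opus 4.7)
The plan is to first establish (i), the identification of the completed tensor product of the test-function spaces with the two-variable space, and then derive (ii) by dualization plus a direct unwinding of the $\varepsilon$-product definitions, using crucially the nuclearity and reflexivity of $\dot{\BB}^{(s)}(U_i)$.

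\textbf{Proof of (i).} By Proposition \ref{60}, each $\dot{\BB}^{(s)}(U_i)$ is a nuclear $(FS)$-space and embeds, via extension by zero, as a topologically closed subspace of the classical Komatsu space $\DD^{(s)}_{\overline{U_i}}$; since the linking maps in the reduced projective limit are compact, the $\varepsilon$- and $\pi$-completed tensor products coincide and commute with these projective limits. Komatsu's kernel theorem (cf.\ \cite{k91}) yields
\[
\DD^{(s)}_{\overline{U_1}}\,\hat{\otimes}\,\DD^{(s)}_{\overline{U_2}}\;\cong\;\DD^{(s)}_{\overline{U_1}\times\overline{U_2}}\;=\;\DD^{(s)}_{\overline{U_1\times U_2}},
\]
and since the $\varepsilon$-product of nuclear $(FS)$-spaces respects closed subspaces, $\dot{\BB}^{(s)}(U_1)\hat{\otimes}\dot{\BB}^{(s)}(U_2)$ sits inside $\DD^{(s)}_{\overline{U_1\times U_2}}$ as a closed subspace. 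To identify this image with $\dot{\BB}^{(s)}(U_1\times U_2)$, I would note that the algebraic tensor product $\DD^{(s)}(U_1)\otimes\DD^{(s)}(U_2)$ lies dense in both: in $\dot{\BB}^{(s)}(U_1)\hat{\otimes}\dot{\BB}^{(s)}(U_2)$ by construction, and in $\dot{\BB}^{(s)}(U_1\times U_2)$ via the inclusion $\DD^{(s)}(U_1)\otimes\DD^{(s)}(U_2)\subseteq\DD^{(s)}(U_1\times U_2)$ combined with a partition of unity and ultradifferentiable mollification argument (valid for $s>1$).

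\textbf{Proof of (ii).} The vector-valued isomorphisms are the definitions (\ref{1397}) applied with $E=\DD'^{(s)}_{L^1}(U_j)$, that is $\DD'^{(s)}_{L^1}(U_i;\DD'^{(s)}_{L^1}(U_j))=\DD'^{(s)}_{L^1}(U_i)\,\varepsilon\,\DD'^{(s)}_{L^1}(U_j)$. Since each $\DD'^{(s)}_{L^1}(U_i)$ is a nuclear $(DFS)$-space (the strong dual of the nuclear $(FS)$-space $\dot{\BB}^{(s)}(U_i)$), the $\varepsilon$-product coincides with the completed $\pi$-tensor product, giving $\DD'^{(s)}_{L^1}(U_1)\hat{\otimes}\DD'^{(s)}_{L^1}(U_2)\cong\DD'^{(s)}_{L^1}(U_1)\,\varepsilon\,\DD'^{(s)}_{L^1}(U_2)$. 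Taking strong duals in (i) and using that the strong dual of a completed tensor product of nuclear $(FS)$-spaces equals the completed tensor product of the strong duals, I obtain $\DD'^{(s)}_{L^1}(U_1)\hat{\otimes}\DD'^{(s)}_{L^1}(U_2)\cong\DD'^{(s)}_{L^1}(U_1\times U_2)$. Finally, since $\dot{\BB}^{(s)}(U_1)$ is a Montel $(FS)$-space, reflexivity gives $\left(\DD'^{(s)}_{L^1}(U_1)\right)'_c=\dot{\BB}^{(s)}(U_1)$, and barrelledness of $\dot{\BB}^{(s)}(U_1)$ forces the $\epsilon$-topology to equal bounded convergence, so
\[
\DD'^{(s)}_{L^1}(U_1)\,\varepsilon\,\DD'^{(s)}_{L^1}(U_2)\;\cong\;\mathcal{L}_b\!\left(\dot{\BB}^{(s)}(U_1),\DD'^{(s)}_{L^1}(U_2)\right),
\]
completing the chain of isomorphisms.

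The main obstacle I expect is the precise identification in (i) of $\dot{\BB}^{(s)}(U_1)\hat{\otimes}\dot{\BB}^{(s)}(U_2)$ with $\dot{\BB}^{(s)}(U_1\times U_2)$ as \emph{topological} subspaces of $\DD^{(s)}_{\overline{U_1\times U_2}}$, not merely as algebraic closures of the same dense subset. One must verify that the projective tensor-product topology (equivalently, the $\varepsilon$-topology, by nuclearity) induced on $\DD^{(s)}(U_1)\otimes\DD^{(s)}(U_2)$ by $\dot{\BB}^{(s)}(U_1)\hat{\otimes}\dot{\BB}^{(s)}(U_2)$ agrees with the topology it inherits from $\dot{\BB}^{(s)}(U_1\times U_2)$; this reduces, at each level $h$, to comparing the norm of $\DD^{(s)}_{L^\infty,h}(U_1\times U_2)$ with the cross-norm on $\DD^{(s)}_{L^\infty,h}(U_1)\hat{\otimes}\DD^{(s)}_{L^\infty,h}(U_2)$, which in turn requires careful ultradifferentiable cutoff/mollification near the boundary of $U_1\times U_2$ using the Denjoy--Carleman machinery for the class $(s)$.
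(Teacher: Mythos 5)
Your part (ii) is essentially the paper's argument: dualize (i), use that the $\DD'^{(s)}_{L^1}(U_i)$ are nuclear $(DFS)$-spaces so that $\pi=\epsilon$, and read off the remaining identifications from the discussion preceding the theorem; that part is fine. The problem is part (i). You have correctly located the crux — showing that the $\pi=\epsilon$ topology on $\dot{\BB}^{(s)}(U_1)\otimes\dot{\BB}^{(s)}(U_2)$ agrees with the topology induced by $\dot{\BB}^{(s)}(U_1\times U_2)$ — but you leave it as ``the main obstacle I expect'' rather than proving it, and this is the actual mathematical content of the theorem; everything else is soft functional analysis. Your proposed shortcut, quoting a kernel theorem $\DD^{(s)}_{\overline{U_1}}\hat{\otimes}\DD^{(s)}_{\overline{U_2}}\cong\DD^{(s)}_{\overline{U_1}\times\overline{U_2}}$ from \cite{k91} and then restricting to closed subspaces via injectivity of the $\varepsilon$-product, does not close the gap: that isomorphism is not in \cite{k91} (the kernel theorem is in \cite{Komatsu2}, and even there the statement proved is density of $\DD^{(s)}(U_1)\otimes\DD^{(s)}(U_2)$ in $\DD^{(s)}(U_1\times U_2)$, not a topological tensor-product identity for the fixed-support spaces $\DD^{(s)}_K$), and any proof of it would require exactly the seminorm comparison you are deferring. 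So a key cited ingredient is unavailable as a black box.

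The missing estimate is short and involves no boundary cutoffs or mollification; your closing remark about Denjoy--Carleman cutoffs near $\partial(U_1\times U_2)$ points in the wrong direction, since density of $\DD^{(s)}(U_1)\otimes\DD^{(s)}(U_2)$ in $\dot{\BB}^{(s)}(U_1\times U_2)$ is free ($\dot{\BB}^{(s)}$ is by definition the closure of $\DD^{(s)}$, and $\DD^{(s)}(U_1)\otimes\DD^{(s)}(U_2)$ is dense in $\DD^{(s)}(U_1\times U_2)$ by Theorem 2.1 of \cite{Komatsu2}). One direction of the topology comparison is the continuity of $(\varphi,\psi)\mapsto\varphi(x)\psi(y)$ (separate continuity between $(F)$-spaces suffices), which shows $\pi$ is finer than the induced topology. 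For the converse, take equicontinuous $A'\subseteq\DD'^{(s)}_{L^1}(U_1)$ and $B'\subseteq\DD'^{(s)}_{L^1}(U_2)$, dominated by the sup-seminorms at some level $h$ with constant $C$, and estimate for $\chi$ in the algebraic tensor product
\[
|\langle T\otimes S,\chi\rangle|\leq C^2\sup_{x,y,\alpha,\beta}\frac{h^{|\alpha|+|\beta|}\left|D_x^{\alpha}D_y^{\beta}\chi(x,y)\right|}{\alpha!^s\beta!^s}\leq C^2\sup_{x,y,\alpha,\beta}\frac{(2^sh)^{|\alpha|+|\beta|}\left|D_x^{\alpha}D_y^{\beta}\chi(x,y)\right|}{(\alpha+\beta)!^s},
\]
using $(\alpha+\beta)!\leq 2^{|\alpha|+|\beta|}\,\alpha!\,\beta!$; the passage from $h$ to $2^sh$ is harmless in the projective limit over $h$, so the $\epsilon$ topology is coarser than the induced one. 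Without this computation (or an equivalent substitute) the proof of (i), and hence of (ii), is incomplete.
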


\begin{proof} First we prove $i)$. Since $\dot{\BB}^{(s)}(U_1)$ and $\dot{\BB}^{(s)}(U_2)$ are nuclear (Proposition \ref{60}) the $\pi$ and the $\epsilon$ topologies coincide on $\dot{\BB}^{(s)}(U_1)\otimes\dot{\BB}^{(s)}(U_2)$. Moreover, one easily verifies that $\dot{\BB}^{(s)}(U_1)\otimes\dot{\BB}^{(s)}(U_2)$ can be regarded as a subspace of $\dot{\BB}^{(s)}(U_1\times U_2)$ by identifying $\varphi\otimes\psi$ with $\varphi(x)\psi(y)$. Since $\DD^{(s)}(U_1\times U_2)$ is continuously and densely injected in $\dot{\BB}^{(s)}(U_1\times U_2)$ and $\DD^{(s)}(U_1)\otimes\DD^{(s)}(U_2)$ is a dense subspace of $\DD^{(s)}(U_1\times U_2)$ (see Theorem 2.1 of \cite{Komatsu2}) we obtain that $\DD^{(s)}(U_1)\otimes\DD^{(s)}(U_2)$ and hence $\dot{\BB}^{(s)}(U_1)\otimes\dot{\BB}^{(s)}(U_2)$ is a dense subspace of $\dot{\BB}^{(s)}(U_1\times U_2)$. Observe that the bilinear mapping $(\varphi,\psi)\mapsto \varphi(x)\psi(y)$, $\dot{\BB}^{(s)}(U_1)\times\dot{\BB}^{(s)}(U_2)\rightarrow \dot{\BB}^{(s)}(U_1\times U_2)$ is continuous (it is separately continuous and hence continuous since all spaces under consideration are $(F)$-spaces). We obtain that the $\pi$ topology on $\dot{\BB}^{(s)}(U_1)\otimes\dot{\BB}^{(s)}(U_2)$ is stronger than the induced one by $\dot{\BB}^{(s)}(U_1\times U_2)$. Hence, to obtain $\dot{\BB}^{(s)}(U_1)\hat{\otimes}\dot{\BB}^{(s)}(U_2)\cong\dot{\BB}^{(s)}(U_1\times U_2)$, it is enough to prove that the $\epsilon$ topology on $\dot{\BB}^{(s)}(U_1)\otimes\dot{\BB}^{(s)}(U_2)$ is weaker than the induced one by $\dot{\BB}^{(s)}(U_1\times U_2)$. Let $A'$ and $B'$ be equicontinuous subsets of $\DD'^{(s)}_{L^1}(U_1)$ and $\DD'^{(s)}_{L^1}(U_2)$ respectively. Hence, there exist $h,C>0$ such that
\beqs
\sup_{T\in A'}|\langle T,\varphi\rangle|\leq C\sup_{x,\alpha}\frac{h^{|\alpha|}\left|D^{\alpha}\varphi(x)\right|}{\alpha!^s} \mbox{ and } \sup_{S\in B'}|\langle S,\psi\rangle|\leq C\sup_{y,\beta}\frac{h^{|\beta|}\left|D^{\beta}\psi(y)\right|}{\beta!^s}
\eeqs
Then for $\chi\in \dot{\BB}^{(s)}(U_1)\otimes\dot{\BB}^{(s)}(U_2)$, $T\in A'$ and $S\in B'$, we have\\
\\
$|\langle T(x)\otimes S(y),\chi(x,y)\rangle|$
\beqs
&=&|\langle T(x),\langle S(y),\chi(x,y)\rangle\rangle|\leq C\sup_{x,\alpha}\frac{h^{|\alpha|}\left|\langle S(y),D^{\alpha}_x\chi(x,y)\rangle\right|}{\alpha!^s}\\
&\leq& C^2\sup_{x,y,\alpha,\beta}\frac{h^{|\alpha|+|\beta|}\left|D^{\alpha}_xD^{\beta}_y \chi(x,y)\right|}{\alpha!^s \beta!^s}\leq C^2\sup_{x,y,\alpha,\beta}\frac{(2^sh)^{|\alpha|+|\beta|}\left|D^{\alpha}_xD^{\beta}_y \chi(x,y)\right|}{(\alpha+\beta)!^s}.
\eeqs
Hence, we obtain that the $\epsilon$ topology is weaker than the topology induced by $\dot{\BB}^{(s)}(U_1\times U_2)$.\\
\indent $ii)$  Since $\dot{\BB}^{(s)}(U_1)$ and $\dot{\BB}^{(s)}(U_2)$ are nuclear $(FS)$-spaces (by Proposition \ref{60}), $\DD'^{(s)}_{L^1}(U_1)$ and $\DD'^{(s)}_{L^1}(U_2)$ are nuclear $(DFS)$-spaces. Hence the $\pi$ and the $\epsilon$ topologies on the tensor product $\DD'^{(s)}_{L^1}(U_1)\otimes\DD'^{(s)}_{L^1}(U_2)$ coincide and by $i)$ (using the fact that $\DD'^{(s)}_{L^1}(U_1)$ and $\DD'^{(s)}_{L^1}(U_2)$ are nuclear $(DFS)$-spaces) we have $\DD'^{(s)}_{L^1}(U_1\times U_2)\cong \left(\dot{\BB}^{(s)}(U_1)\hat{\otimes}\dot{\BB}^{(s)}(U_2)\right)'\cong \DD'^{(s)}_{L^1}(U_1)\hat{\otimes}\DD'^{(s)}_{L^1}(U_2)$. Other isomorphisms in the assertion on $U$ follow by the discussion before the theorem.
\end{proof}

\subsection{Banach-valued ultradistributions}

Let now $E$ be a $(B)$-space and denote by $L^p(U;E)$, $1\leq
p\leq \infty$, the Bochner $L^p$ space. If $\varphi\in
\mathcal{C}_{L^{\infty}}(U)$ (the space of bounded continuous
functions on $U$) and $\mathbf{F}\in L^1(U;E)$ then one easily
verifies that $\varphi \mathbf{F}\in L^1(U;E)$. We will need the
following lemma.

\begin{lem}\label{150}(variant of du Bois-Reymond lemma for Bochner integrable functions)
Let $\mathbf{F}\in L^1(U;E)$ is such that $\ds\int_U \mathbf{F}(x)\varphi(x)dx=0$ for all $\varphi\in\DD^{(s)}(U)$. Then $\mathbf{F}(x)=0$ a.e.
\end{lem}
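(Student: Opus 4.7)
The plan is to mimic the classical du Bois--Reymond argument using Gevrey mollifiers. Fix a nonnegative $\rho\in\DD^{(s)}(\RR^d)$ with $\supp\rho\subseteq B_1(0)$ and $\int\rho\,dx=1$ (such functions exist since $s>1$), and set $\rho_\varepsilon(x)=\varepsilon^{-d}\rho(x/\varepsilon)$. For each $x_0\in U$ with $\mathrm{dist}(x_0,\partial U)>\varepsilon$, the function $\varphi_{x_0,\varepsilon}(x):=\rho_\varepsilon(x_0-x)$ belongs to $\DD^{(s)}(U)$, so the hypothesis gives
\[
0=\int_U \mathbf{F}(x)\rho_\varepsilon(x_0-x)\,dx=(\rho_\varepsilon*\mathbf{F})(x_0),
\]
where the convolution is understood in the Bochner sense after extending $\mathbf{F}$ by $0$ outside $U$.

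The main step is then a Lebesgue differentiation argument for Bochner integrable $E$-valued functions: at almost every $x_0\in U$,
\[
\lim_{\varepsilon\to 0^+}\frac{1}{|B_\varepsilon(x_0)|}\int_{B_\varepsilon(x_0)}\|\mathbf{F}(x)-\mathbf{F}(x_0)\|_E\,dx=0.
\]
This is the standard Lebesgue differentiation theorem applied to the scalar function $x\mapsto\|\mathbf{F}(x)-v\|_E$ for $v$ ranging over a countable dense subset of a separable subspace $E_0\subseteq E$ containing the essential range of $\mathbf{F}$ (such an $E_0$ exists by the Pettis measurability theorem). At every such Lebesgue point $x_0$ one estimates
\[
\|(\rho_\varepsilon*\mathbf{F})(x_0)-\mathbf{F}(x_0)\|_E\le\|\rho\|_{L^\infty}\cdot\frac{1}{|B_\varepsilon(x_0)|}\int_{B_\varepsilon(x_0)}\|\mathbf{F}(x)-\mathbf{F}(x_0)\|_E\,dx\longrightarrow 0,
\]
so combining with the previous display gives $\mathbf{F}(x_0)=0$ at a.e.\ $x_0\in U$.

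The only delicate point is that one is working with $E$-valued integrands, so one cannot simply quote the scalar result. This is handled by the Pettis measurability reduction above, which makes the essential range separable and allows standard real-variable techniques (via the countable family of scalar functions $x\mapsto\|\mathbf{F}(x)-v_n\|_E$) to apply. As a sanity check, one could instead reduce to the scalar case by composing with functionals $e'\in E'$: for each $e'$, $\langle e',\mathbf{F}(\cdot)\rangle\in L^1(U)$ satisfies the scalar hypothesis, hence vanishes a.e.; choosing a countable norming family for the separable subspace $E_0$ furnished by Pettis' theorem yields $\mathbf{F}=0$ a.e. Either route leads to the same conclusion.
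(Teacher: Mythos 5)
Your proof is correct, but it takes a genuinely different route from the paper's. You argue by mollification: testing against the translates $\rho_\varepsilon(x_0-\cdot)\in\DD^{(s)}(U)$ of a Gevrey bump gives $(\rho_\varepsilon*\mathbf{F})(x_0)=0$ for $x_0$ at distance more than $\varepsilon$ from $\partial U$, and you then invoke the Lebesgue differentiation theorem for Bochner integrable functions (itself deduced from Pettis measurability and the scalar theorem applied to the countable family $x\mapsto\|\mathbf{F}(x)-v_n\|_E$) to get $\mathbf{F}(x_0)=0$ at a.e.\ $x_0$. The paper instead reduces immediately to scalars: for each $e'\in E'$ the function $e'\circ\mathbf{F}$ annihilates $\DD^{(s)}(U)$, hence vanishes a.e.\ by density of $\DD^{(s)}(U)$ in $\DD(U)$ and the classical du Bois--Reymond lemma; the vector-valued conclusion is then extracted purely functional-analytically --- strong measurability places $\mathbf{F}$ in a separable closed subspace $\tilde E$, the weak-$*$ dual of $\tilde E$ is separable, and a countable weak-$*$ dense family of functionals (extended by Hahn--Banach) separates points of $\tilde E$, so the union of the corresponding null sets finishes the proof. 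Your route buys a self-contained real-variable argument that avoids duality and identifies the good set as the Lebesgue points of $\mathbf{F}$; the paper's buys a quicker reduction to the known scalar lemma at the price of the weak-$*$ separability argument. Your closing ``sanity check'' via functionals and a countable norming family is essentially the paper's proof, stated more compactly. One harmless slip: since $\varepsilon^{-d}=|B_1(0)|/|B_\varepsilon(x_0)|$, the constant in your displayed convolution estimate should be $|B_1(0)|\,\|\rho\|_{L^\infty}$ rather than $\|\rho\|_{L^\infty}$ (and the identity behind it uses $\int\rho_\varepsilon=1$ together with $B_\varepsilon(x_0)\subseteq U$, so the zero extension plays no role there); this does not affect the conclusion.
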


\begin{proof} Observe first that for each $e'\in E'$ and $\varphi\in\DD^{(s)}(U)$, we have
\beqs
\int_U e'\circ\mathbf{F}(x) \varphi(x)dx=e'\left(\int_U \mathbf{F}(x) \varphi(x)dx\right)=0.
\eeqs
Since $\DD^{(s)}(U)$ is dense in $\DD(U)$, by the du Bois-Reymond lemma it follows that $e'\circ\mathbf{F}=0$ a.e. for each $e'\in E'$. Since $\mathbf{F}$ is strongly measurable $\mathbf{F}(U)$ is separable subset of $E$. Let $D$ be a countable dense subset of $\mathbf{F}(U)$. Denote by $L$ the set of all finite linear combinations of the elements of $D$ with scalars from $\QQ+i\QQ$. Then $L$ is countable. Denote by $\tilde{E}$ the closure of $L$ in $E$. Then $\tilde{E}$ is a separable $(B)$-space and $\mathbf{F}(U)\subseteq \tilde{E}$. Thus $\tilde{E}'_{\sigma}$ is separable (by Theorem 1.7 of Chapter 4 of \cite{Schaefer}; $\sigma$ stands for the weak* topology). Let $\tilde{V}=\{\tilde{e}'_1,\tilde{e}'_2,\tilde{e}'_3,...\}$ be a countable dense subset of $\tilde{E}'_{\sigma}$. Extend each $\tilde{e}'_j$, $j\in\ZZ_+$, by the Hahn-Banach theorem to a continuous functional of $E$ and denote this extension by $e'_j$, $j\in\ZZ_+$. Arguments given above imply that $e'_j\circ \mathbf{F}=0$ a.e. for each $j\in\ZZ_+$ and in fact $\tilde{e}'_j\circ \mathbf{F}=0$ a.e., $j\in\ZZ_+$, since $e'_j$ is extension of $\tilde{e}'_j$ and $\mathbf{F}(U)\subseteq \tilde{E}$. Hence $P_j=\{x\in U|\, \tilde{e}'_j\circ \mathbf{F}(x)\neq 0\}$ is a set of measure $0$, for each $j\in\ZZ_+$ and so is $P=\bigcup_j P_j$. We will prove that $\mathbf{F}(x)=0$ for every $x\in U\backslash P$. Assume that there exists $x_0\in U\backslash P$ such that $\mathbf{F}(x_0)\neq 0$. Then there exists $\tilde{e}'\in \tilde{E}'$ such that $\tilde{e}'\circ\mathbf{F}(x_0)\neq 0$ i.e. $\left|\tilde{e}'\circ\mathbf{F}(x_0)\right|=c>0$. Then there exists $\tilde{e}'_k\in \tilde{V}$ such that $\left|\tilde{e}'\circ\mathbf{F}(x_0)-\tilde{e}'_k\circ\mathbf{F}(x_0)\right|\leq c/2$. Since $\tilde{e}'_k\circ\mathbf{F}(x_0)=0$, by the definition of $P$, we have
\beqs
c=\left|\tilde{e}'\circ\mathbf{F}(x_0)\right|\leq \left|\tilde{e}'\circ\mathbf{F}(x_0)-\tilde{e}'_k\circ\mathbf{F}(x_0)\right|+\left|\tilde{e}'_k\circ\mathbf{F}(x_0)\right|\leq c/2,
\eeqs
which is a contradiction. Hence $\mathbf{F}(x)=0$ for all $x\in U\backslash P$ and the proof is complete.
\end{proof}

Denote by $\delta_x$ the delta ultradistribution concentrated at $x$. For $\alpha\in\NN^d$ and $x\in U$ one easily verifies that $D^{\alpha}\delta_x \in \DD'^{(s)}_{L^1,h}(U)$ for any $h>0$ and hence, by Proposition \ref{50}, $D^{\alpha}\delta_x \in \DD'^{(s)}_{L^p,h}(U)$ for any $h>0$ and $1\leq p\leq \infty$. For the next proposition we need the following result.

\begin{lem}\label{310}
Let $h>0$, $\alpha\in\NN^d$ and $1\leq p\leq \infty$. The set $G_{\alpha}=\{D^{\alpha}\delta_x|\, x\in U\}\subseteq \DD'^{(s)}_{L^p,h}(U)$ is precompact in $\DD'^{(s)}_{L^p,h}(U)$.
\end{lem}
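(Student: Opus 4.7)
The plan is to realize $G_\alpha$ as a subset of the continuous image of the compact set $\overline{U}$ under a Lipschitz map $\Phi$ into the Banach space $\DD'^{(s)}_{L^p,h}(U)$, so that compactness of $\Phi(\overline{U})$ will yield precompactness of $G_\alpha$.

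First, writing $q$ for the conjugate exponent to $p$, I would combine Proposition \ref{50} (which supplies the continuous embedding $\DD^{(s)}_{L^q,h}(U)\hookrightarrow\DD^{(s)}_{L^\infty,h_0}(U)$ with $h_0=h/2^s$ if $q<\infty$ and $h_0=h$ if $q=\infty$) with Proposition \ref{60}(i) (which identifies $\DD^{(s)}_{L^\infty,h_0}(U)$ isometrically with a closed subspace of $\DD^{s,h_0}_{\overline{U}}$). The outcome is that every $\varphi\in\DD^{(s)}_{L^q,h}(U)$ extends to a $\mathcal{C}^\infty$ function $\tilde\varphi$ on $\RR^d$ with support in $\overline{U}$, satisfying $\|\tilde\varphi\|_{\DD^{s,h_0}_{\overline{U}}}\leq C\|\varphi\|_{\DD^{(s)}_{L^q,h}(U)}$ for a fixed constant $C$. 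In particular $D^\alpha\varphi$ extends continuously to $\overline{U}$.

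Next, I define $\Phi:\overline{U}\to\DD'^{(s)}_{L^p,h}(U)$ by $\langle\Phi(x),\varphi\rangle=(-1)^{|\alpha|}D^\alpha\tilde\varphi(x)$ for $\varphi\in\DD^{(s)}_{L^q,h}(U)$. The estimate $|D^\alpha\tilde\varphi(x)|\leq C\alpha!^sh_0^{-|\alpha|}\|\varphi\|_{\DD^{(s)}_{L^q,h}(U)}$ shows that $\Phi(x)$ is a continuous functional with norm uniformly bounded in $x\in\overline{U}$, and plainly $\Phi(x)=D^\alpha\delta_x$ when $x\in U$, so $G_\alpha\subseteq\Phi(\overline{U})$.

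The core step is the continuity of $\Phi$ in the Banach-dual norm, and this is where the estimates have to be carried out with some care. For $x,y\in\overline{U}$ and $\|\varphi\|_{\DD^{(s)}_{L^q,h}(U)}\leq 1$, the mean value inequality applied to $\tilde\varphi\in\mathcal{C}^\infty(\RR^d)$ gives
\[
|D^\alpha\tilde\varphi(x)-D^\alpha\tilde\varphi(y)|\leq |x-y|\sum_{i=1}^d\|D^{\alpha+e_i}\tilde\varphi\|_{L^\infty(\RR^d)}\leq C'|x-y|,
\]
where $C'=Cd\max_{1\leq i\leq d}(\alpha+e_i)!^s h_0^{-|\alpha|-1}$ depends only on $\alpha,h,s,d$. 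Taking the supremum over such $\varphi$ yields $\|\Phi(x)-\Phi(y)\|_{\DD'^{(s)}_{L^p,h}(U)}\leq C'|x-y|$, so $\Phi$ is Lipschitz, hence continuous. Since $\overline{U}$ is compact in $\RR^d$, $\Phi(\overline{U})$ is a compact subset of $\DD'^{(s)}_{L^p,h}(U)$ containing $G_\alpha$, which proves precompactness. The main point that one has to verify carefully is the uniform Gevrey-type control of the extensions $\tilde\varphi$, obtained by chaining the embeddings in Propositions \ref{50} and \ref{60}(i); once this is in hand, the Lipschitz estimate and the conclusion are straightforward.
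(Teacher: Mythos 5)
Your proof is correct, but it follows a genuinely different route from the paper's. The paper gets precompactness "for free" from functional analysis: using Proposition \ref{50} and the compactness of the linking maps in Proposition \ref{60}$(iii)$, it builds a compact (dense) inclusion $\DD^{(s)}_{L^q,h}(U)\rightarrow\DD^{(s)}_{L^{\infty},h_1}(U)$ for suitable $h_1<h/2^s$, dualizes to a compact inclusion $\DD'^{(s)}_{L^1,h_1}(U)\rightarrow\DD'^{(s)}_{L^p,h}(U)$, and then merely checks that $G_{\alpha}$ is \emph{bounded} in the smaller dual. You instead prove that the map $x\mapsto D^{\alpha}\delta_x$ is Lipschitz from $\overline{U}$ into the dual Banach space and conclude by compactness of $\overline{U}$. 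Your argument is more elementary in that it bypasses Proposition \ref{60}$(iii)$ (hence Komatsu's compactness result for $\DD^{s,h_1}_{\overline{U}}\rightarrow\DD^{s,h}_{\overline{U}}$) and needs only the extension statement of Proposition \ref{60}$(i)$ together with the embedding of Proposition \ref{50}; it also yields slightly more, namely an explicit compact set $\Phi(\overline{U})$ containing $G_{\alpha}$ and a quantitative modulus of continuity for $x\mapsto D^{\alpha}\delta_x$. The paper's argument is shorter given the machinery already in place and applies verbatim to any set that is merely bounded in $\DD'^{(s)}_{L^1,h_1}(U)$, without requiring it to be parametrized continuously by a compact set. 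Two small points you handled correctly and that are genuinely needed: the extension $\tilde\varphi$ to all of $\RR^d$ is what makes the mean value inequality legitimate when the segment $[x,y]$ leaves $U$ (for non-convex $U$), and the uniform Gevrey bound on $\|D^{\alpha+e_i}\tilde\varphi\|_{L^{\infty}}$ in terms of $\|\varphi\|_{\DD^{(s)}_{L^q,h}(U)}$ is exactly what the chain of embeddings supplies.
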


\begin{proof} Let $0<h_1<h/2^s$. By Proposition \ref{50} we have the continuous inclusion $\DD^{(s)}_{L^q,h}(U)\rightarrow\DD^{(s)}_{L^{\infty},h/2^s}(U)$. Proposition \ref{60} implies that the inclusion $\DD^{(s)}_{L^{\infty},h/2^s}(U)\rightarrow\DD^{(s)}_{L^{\infty},h_1}(U)$ is compact. Hence we have the compact dense inclusion $\DD^{(s)}_{L^q,h}(U)\rightarrow \DD^{(s)}_{L^{\infty},h_1}(U)$ (the denseness follows from the fact that $\DD^{(s)}(U)\subseteq \DD^{(s)}_{L^q,h}(U)$ is dense in $\DD^{(s)}_{L^{\infty},h_1}(U)$). So, the dual mapping $\DD'^{(s)}_{L^1,h_1}(U)\rightarrow \DD'^{(s)}_{L^p,h}(U)$ is compact inclusion. Observe that, for $\varphi\in \DD^{(s)}_{L^{\infty},h_1}(U)$, $\ds\left|\langle D^{\alpha}\delta_x,\varphi\rangle\right|\leq \frac{\alpha!^s}{h_1^{|\alpha|}}\left\|D^{\alpha}\varphi\right\|_{\DD^{(s)}_{L^{\infty},h_1}(U)}$, $\forall x\in U$. Hence $G_{\alpha}$ is bounded in the $(B)$-space $\DD'^{(s)}_{L^1,h_1}(U)$, thus precompact in $\DD'^{(s)}_{L^p,h}(U)$.
\end{proof}

\begin{prop}\label{54}
Each $\mathbf{F}\in L^p(U;E)$ can be regarded as an $E$-valued ultradistribution by $\ds\overline{\mathbf{F}}(\varphi)=\int_U \mathbf{F}(x)\varphi(x)dx$. In this way $L^p(U;E)$ is continuously injected into $\DD'^{(s)}_{L^1}(U;E)$ for $1\leq p\leq \infty$ and in $\DD'^{(s)}_{L^p,h}(U;E)$ for $1<p <\infty$.
\end{prop}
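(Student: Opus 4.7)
The plan is to produce, for each $\mathbf{F}\in L^p(U;E)$, a continuous linear map $\overline{\mathbf{F}}$ on the appropriate test space and then verify that the assignment $\mathbf{F}\mapsto\overline{\mathbf{F}}$ is continuous and injective. First I would check that $\varphi\mathbf{F}\in L^1(U;E)$ for every $\varphi$ in the relevant test space: strong measurability of $\varphi\mathbf{F}$ is inherited from $\mathbf{F}$, and the scalar H\"older inequality applied to $\|\mathbf{F}(\cdot)\|_E\in L^p(U)$ and $|\varphi(\cdot)|\in L^{p'}(U)$ (with $p'$ the conjugate exponent, allowing $\infty$) yields
\[
\int_U\|\mathbf{F}(x)\varphi(x)\|_E\,dx\le\|\mathbf{F}\|_{L^p(U;E)}\|\varphi\|_{L^{p'}(U)},
\]
so the Bochner integral $\overline{\mathbf{F}}(\varphi)$ exists and $\|\overline{\mathbf{F}}(\varphi)\|_E$ is bounded by the same right-hand side.

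Next I would identify the codomains and check continuity of $\overline{\mathbf{F}}$ as a linear map on the test space. By the discussion preceding the proposition, $\DD'^{(s)}_{L^1}(U;E)=\mathcal{L}_b(\dot{\BB}^{(s)}(U),E)$ and, for $1<p<\infty$, $\DD'^{(s)}_{L^p,h}(U;E)=\mathcal{L}_b(\DD^{(s)}_{L^q,h}(U)_c,E)$ with $q$ the conjugate of $p$; since $\DD^{(s)}_{L^q,h}(U)$ is a reflexive $(B)$-space, its compact-convex-circled topology agrees with the norm topology. For the first embedding, every $\varphi\in\dot{\BB}^{(s)}(U)$ lies in $L^\infty(U)$ and, because $U$ is bounded, also in $L^{p'}(U)$ with $\|\varphi\|_{L^{p'}(U)}\le|U|^{1/p'}\|\varphi\|_{L^\infty(U)}$; combined with the H\"older bound, this gives continuity of $\overline{\mathbf{F}}\colon\dot{\BB}^{(s)}(U)\to E$. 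For the second embedding, the $\alpha=0$ summand of the $\DD^{(s)}_{L^q,h}(U)$-norm gives $\|\varphi\|_{L^q(U)}\le\|\varphi\|_{\DD^{(s)}_{L^q,h}(U)}$, and the same H\"older estimate yields continuity of $\overline{\mathbf{F}}\colon\DD^{(s)}_{L^q,h}(U)\to E$.

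Continuity of $\mathbf{F}\mapsto\overline{\mathbf{F}}$ and injectivity then follow quickly. The topology of bounded convergence on the codomain is generated by the seminorms $\sup_{\varphi\in B}\|\overline{\mathbf{F}}(\varphi)\|_E$ indexed by bounded sets $B$ in the test space; any such $B$ is uniformly bounded in $L^{p'}(U)$, respectively $L^q(U)$, so $\sup_{\varphi\in B}\|\overline{\mathbf{F}}(\varphi)\|_E\le C_B\|\mathbf{F}\|_{L^p(U;E)}$. For injectivity, if $\overline{\mathbf{F}}=0$ then in particular $\int_U\mathbf{F}(x)\varphi(x)\,dx=0$ for every $\varphi\in\DD^{(s)}(U)\subset\dot{\BB}^{(s)}(U)$, and Lemma~\ref{150} forces $\mathbf{F}=0$ a.e. The only non-routine ingredient is the vector-valued du Bois-Reymond lemma already in place; the rest is H\"older bookkeeping combined with the explicit descriptions of the codomain topologies, and I expect no substantive obstacle beyond keeping the endpoint cases $p=1$ and $p=\infty$ straight in the $\DD'^{(s)}_{L^1}$ embedding.
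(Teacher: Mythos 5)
Your treatment of the embedding into $\DD'^{(s)}_{L^1}(U;E)$ matches the paper's: the H\"older estimate, the bound $\|\varphi\|_{L^{q}(U)}\le|U|^{1/q}\|\varphi\|_{L^{\infty}(U)}$ valid because $U$ is bounded, and Lemma~\ref{150} for injectivity. There the identification $\DD'^{(s)}_{L^1}(U;E)=\mathcal{L}_b\left(\dot{\BB}^{(s)}(U),E\right)$ is legitimate because $\dot{\BB}^{(s)}(U)$ and $\DD'^{(s)}_{L^1}(U)$ are Montel, so the compact convex circled topology and the strong topology coincide, and norm-type continuity on the test space is exactly what is required.

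The second embedding, however, contains a genuine error. You assert that, because $\DD^{(s)}_{L^q,h}(U)$ is a reflexive $(B)$-space, its compact convex circled topology (from the duality with $\DD'^{(s)}_{L^p,h}(U)$) agrees with the norm topology. This is false for any infinite-dimensional Banach space: the norm topology is that of uniform convergence on \emph{bounded} subsets of the dual, while the $c$-topology is uniform convergence on norm-compact convex circled subsets of $\DD'^{(s)}_{L^p,h}(U)$, which is strictly weaker. Your H\"older estimate therefore only places $\overline{\mathbf{F}}$ in $\mathcal{L}\left(\DD^{(s)}_{L^q,h}(U),E\right)$ for the norm topology, which is in general strictly larger than $\DD'^{(s)}_{L^p,h}(U)\varepsilon E=\mathcal{L}\left(\left(\DD'^{(s)}_{L^p,h}(U)\right)'_c,E\right)$; membership in the latter requires a bound of the form $\|\overline{\mathbf{F}}(\varphi)\|_E\le\sup_{T\in K}|\langle T,\varphi\rangle|$ for some compact convex circled $K\subseteq\DD'^{(s)}_{L^p,h}(U)$. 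This is precisely the missing ingredient that the paper supplies: by Lemma~\ref{310} the set $G=\{\delta_x\mid x\in U\}$ is precompact in $\DD'^{(s)}_{L^p,h}(U)$, and the pointwise estimate $\|\overline{\mathbf{F}}(\varphi)\|_E\le|U|^{1/q}\|\mathbf{F}\|_{L^p(U;E)}\sup_{x\in U}|\langle\delta_x,\varphi\rangle|$ shows that $\overline{\mathbf{F}}$ maps the polar of a suitable multiple of $G$ into any prescribed ball of $E$, which gives continuity for the $c$-topology (the topologies of precompact and of compact convex circled convergence coincide here since $\DD'^{(s)}_{L^p,h}(U)$ is a $(B)$-space). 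Your argument for $1<p<\infty$ needs to be repaired along these lines.
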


\begin{proof} Let $\mathbf{F}\in L^p(U;E)$. First we will prove that $L^p(U;E)$ is continuously injected into $\DD'^{(s)}_{L^1}(U;E)$. If $\varphi\in\dot{\BB}^{(s)}(U)$ then
\beq\label{70}
\left\|\int_U \mathbf{F}(x)\varphi(x)dx\right\|_E\leq \int_U \|\mathbf{F}(x)\|_E|\varphi(x)|dx\leq \|\mathbf{F}\|_{L^p(U;E)}\|\varphi\|_{L^q(U)}.
\eeq
Since $U$ is bounded, $\|\varphi\|_{L^q(U)}\leq |U|^{1/q}\|\varphi\|_{L^{\infty}(U)}$. Hence $\overline{\mathbf{F}}\in \mathcal{L}_b\left(\dot{\BB}^{(s)}(U),E\right)=\DD'^{(s)}_{L^1}(U;E)$ and the mapping $\mathbf{F}\mapsto \overline{\mathbf{F}}$ is continuous from $L^p(U;E)$ into $\DD'^{(s)}_{L^1}(U;E)$. To prove that it is injective let $\overline{\mathbf{F}}=0$ i.e. $\ds \int_U \mathbf{F}(x)\varphi(x)dx=0$ for all $\varphi\in\dot{\BB}^{(s)}(U)$. Since $U$ is bounded $L^p(U;E)\subseteq L^1(U;E)$. Now, Lemma \ref{150} implies that $\mathbf{F}=0$.\\
\indent Next, we prove that $L^p(U;E)$ is continuously injected into $\DD'^{(s)}_{L^p,h}(U;E)$ for $1<p<\infty$. Consider the set $G=\{\delta_x|\, x\in U\}\subseteq \DD'^{(s)}_{L^p,h}(U)$. It is precompact in $\DD'^{(s)}_{L^p,h}(U)$ by Lemma \ref{310}. Fix $\mathbf{F}\in L^p(U;E)$ and note that (\ref{70}) still holds when $\varphi\in\DD^{(s)}_{L^q,h}(U)$. Let $V=\{e\in E|\, \|e\|_E\leq \varepsilon\}$ be a neighborhood of zero in $E$ and $\ds \tilde{G}= \frac{\|\mathbf{F}\|_{L^p(U;E)} |U|^{1/q}}{\varepsilon}G$. Since $G$ is precompact so is $\tilde{G}$. But then, for $\varphi\in \tilde{G}^{\circ}$,
\beqs
\|\mathbf{F}\|_{L^p(U;E)}\|\varphi\|_{L^q(U)}\leq |U|^{1/q}\|\mathbf{F}\|_{L^p(U;E)}\sup_{x\in U}|\langle \delta_x, \varphi\rangle|\leq \varepsilon.
\eeqs
Hence $\overline{\mathbf{F}}(\varphi)\in V$ for all $\varphi\in \tilde{G}^{\circ}$. We obtain that $\overline{\mathbf{F}}\in \mathcal{L}\left(\DD^{(s)}_{L^q,h}(U)_c, E\right)$ since the topology of precompact convergence on $\DD^{(s)}_{L^q,h}(U)$ coincides with the topology of compact convex circled convergence ($\DD'^{(s)}_{L^p,h}(U)$ is a $(B)$-space). The continuity of the mapping $\mathbf{F}\mapsto\overline{\mathbf{F}}$ follows from (\ref{70}) since the bounded sets of $\DD^{(s)}_{L^q,h}(U)$ are the same for the initial topology and the topology of compact convex circled convergence. The proof of the injectivity is the same as above.
\end{proof}

By Proposition \ref{54}, from now on we will use the same notation for $\mathbf{F}\in L^p(U;E)$ and its image in $\DD'^{(s)}_{L^1}(U;E)$, resp. $\DD'^{(s)}_{L^p,h}(U;E)$ for $1<p<\infty$.\\
\indent For $\alpha\in\NN^d$ and $\mathbf{F}\in L^p(U;E)$, $1<p<\infty$, define $D^{\alpha}\mathbf{F}\in \DD'^{(s)}_{L^p,h}(U;E)$ by
\beqs
D^{\alpha}\mathbf{F}(\varphi)=\int_U \mathbf{F}(x) (-D)^{\alpha}\varphi(x)dx,\, \varphi\in \DD^{(s)}_{L^q,h}(U).
\eeqs
As in Proposition \ref{54}, one can prove that this is well defined element of $\DD'^{(s)}_{L^p,h}(U;E)$. One only has to use the set $G_{\alpha}$ from Lemma \ref{310} instead $G=\{\delta_x|\, x\in U\}$. Observe that $D^{\alpha}\mathbf{F}$ coincides with the ultradistributional derivative of $\mathbf{F}$ when we regard $\mathbf{F}$ as an element of $\DD'^{(s)}_{L^1}(U;E)$ or $\DD'^{(s)}(U;E)$.

\begin{thm}\label{350}
Let $1<p<\infty$ and $\mathbf{F}_{\alpha}\in L^p(U;E)$, $\alpha\in\NN^d$, are such that, for some fixed $h>0$, $\left(\ds \sum_{\alpha}\frac{\alpha!^{ps}}{h^{|\alpha|p}}\|\mathbf{F}_{\alpha}\|_{L^p}^p\right)^{1/p}<\infty$. Then the partial sums $\ds\sum_{|\alpha|=0}^n D^{\alpha} \mathbf{F}_{\alpha}$ converge absolutely in $\DD'^{(s)}_{L^p}(U;E)$ and $\DD'^{(s)}_{L^p,h}(U;E)$.\\
\indent The partial sums converge absolutely in $\DD'^{(s)}_{L^1}(U;E)$ also in the cases $p=1$ and $p=\infty$.
\end{thm}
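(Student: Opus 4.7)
The plan is to bound a generic seminorm of $\sum_{|\alpha|=0}^n D^\alpha\mathbf{F}_\alpha$ uniformly in $n$ by exactly the quantity assumed finite in the hypothesis. Recall from the discussion preceding the theorem that, because $\DD'^{(s)}_{L^p,h}(U)$ is barrelled, the topology of $\DD'^{(s)}_{L^p,h}(U;E)=\DD'^{(s)}_{L^p,h}(U)\varepsilon E$ coincides with the topology of bounded convergence on its dual $\DD^{(s)}_{L^q,h}(U)$, where $1/p+1/q=1$; likewise $\DD'^{(s)}_{L^1}(U;E)=\mathcal{L}_b(\dot{\BB}^{(s)}(U),E)$. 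Thus one must verify, for each bounded set $B$ in the appropriate test space, that $\sup_{\varphi\in B}\sum_\alpha\|D^\alpha\mathbf{F}_\alpha(\varphi)\|_E<\infty$.

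The workhorse estimate, valid for $1<p<\infty$ and $\varphi\in\DD^{(s)}_{L^q,h}(U)$, is a two-level H\"older inequality: first on the Bochner integral,
\[
\|D^\alpha\mathbf{F}_\alpha(\varphi)\|_E\le\int_U\|\mathbf{F}_\alpha(x)\|_E\,|D^\alpha\varphi(x)|\,dx\le\|\mathbf{F}_\alpha\|_{L^p(U;E)}\|D^\alpha\varphi\|_{L^q(U)},
\]
then on the discrete sum, inserting the weights $\alpha!^{\pm s}/h^{\pm|\alpha|}$, to obtain
\[
\sum_{\alpha}\|D^\alpha\mathbf{F}_\alpha(\varphi)\|_E\le\Bigl(\sum_\alpha\tfrac{\alpha!^{ps}\|\mathbf{F}_\alpha\|_{L^p}^p}{h^{|\alpha|p}}\Bigr)^{1/p}\|\varphi\|_{\DD^{(s)}_{L^q,h}(U)}.
\]
Since the right factor is bounded on every bounded $B\subseteq\DD^{(s)}_{L^q,h}(U)$, absolute convergence in $\DD'^{(s)}_{L^p,h}(U;E)$ follows at once. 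For $\DD'^{(s)}_{L^p}(U;E)$ one appeals to Proposition \ref{50}: since the spaces $\DD^{(s)}_{L^q}(U)$ for $1\le q<\infty$ and $\dot{\BB}^{(s)}(U)$ are all isomorphic, $\DD'^{(s)}_{L^p}(U;E)\cong\DD'^{(s)}_{L^1}(U;E)$, and any $B$ bounded in $\dot{\BB}^{(s)}(U)$ is bounded in each $\DD^{(s)}_{L^q,h}(U)$, so the same estimate applies.

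For the endpoint cases one replaces the pair $(\ell^p,\ell^q)$ with $(\ell^1,\ell^\infty)$ in the discrete step. For $p=1$ one takes $\varphi\in\dot{\BB}^{(s)}(U)\hookrightarrow\DD^{(s)}_{L^\infty,h}(U)$ and writes
\[
\sum_\alpha\|D^\alpha\mathbf{F}_\alpha(\varphi)\|_E\le\Bigl(\sum_\alpha\tfrac{\alpha!^{s}\|\mathbf{F}_\alpha\|_{L^1}}{h^{|\alpha|}}\Bigr)\|\varphi\|_{\DD^{(s)}_{L^\infty,h}(U)},
\]
while for $p=\infty$ one uses Proposition \ref{50} in the form $\dot{\BB}^{(s)}(U)\hookrightarrow\DD^{(s)}_{L^1,h}(U)$ to estimate
\[
\sum_\alpha\|D^\alpha\mathbf{F}_\alpha(\varphi)\|_E\le\Bigl(\sup_\alpha\tfrac{\alpha!^{s}\|\mathbf{F}_\alpha\|_{L^\infty}}{h^{|\alpha|}}\Bigr)\|\varphi\|_{\DD^{(s)}_{L^1,h}(U)}.
\]
In both cases, bounded sets in $\dot{\BB}^{(s)}(U)$ remain bounded in the required auxiliary $\DD^{(s)}_{L^{\infty},h}(U)$ or $\DD^{(s)}_{L^1,h}(U)$, yielding absolute convergence in $\DD'^{(s)}_{L^1}(U;E)$.

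There is no real obstacle once the correct topology is identified; the only points of care are (i) invoking barrelledness so that the $\varepsilon$-topology on $\DD'^{(s)}_{L^p,h}(U;E)$ is the topology of bounded convergence on $\DD^{(s)}_{L^q,h}(U)$, and (ii) using Proposition \ref{50} both to identify $\DD'^{(s)}_{L^p}(U;E)$ with $\DD'^{(s)}_{L^1}(U;E)$ and to interchange the $L^1$ and $L^\infty$ roles of the test space in the endpoint cases.
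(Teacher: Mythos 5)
Your proposal is correct and follows essentially the same route as the paper: identify the topology of $\DD'^{(s)}_{L^p,h}(U;E)$ with that of bounded convergence, apply H\"older first to the Bochner integral and then to the weighted discrete sum, and conclude by completeness. The only difference is that you spell out the endpoint cases $p=1$ and $p=\infty$ (and the passage to $\DD'^{(s)}_{L^p}(U;E)$ via Proposition \ref{50}), which the paper dismisses as ``similar''; your treatment of those cases is accurate.
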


\begin{proof} Let $1<p<\infty$. To prove that the partial sums converge absolutely in $\DD'^{(s)}_{L^p,h}(U;E)=\mathcal{L}_b\left(\DD^{(s)}_{L^q,h}(U)_c, E\right)$ let $B$ be a bounded subset of $\DD^{(s)}_{L^q,h}(U)_c$. Since the bounded sets of $\DD^{(s)}_{L^q,h}(U)$ are the same for the initial topology and the topology of compact convex circled convergence we may assume that $B$ is the closed unit ball in $\DD^{(s)}_{L^q,h}(U)$. We obtain\\
\\
$\ds \sum_{|\alpha|=0}^n \sup_{\varphi\in B}\left\|\int_U \mathbf{F}_{\alpha}(x)(-D)^{\alpha}\varphi(x)dx\right\|_E$
\beqs
&\leq& \sup_{\varphi\in B}\sum_{|\alpha|=0}^{\infty} \int_U \|\mathbf{F}_{\alpha}(x)\|_E |D^{\alpha}\varphi(x)|dx\leq \sup_{\varphi\in B} \sum_{|\alpha|=0}^{\infty} \|\mathbf{F}_{\alpha}\|_{L^p(U;E)} \|D^{\alpha}\varphi\|_{L^q(U)}\\
&\leq& \left(\sum_{|\alpha|=0}^{\infty} \frac{\alpha!^{ps}}{h^{|\alpha|p}}\|\mathbf{F}_{\alpha}\|_{L^p(U;E)}^p\right)^{1/p} \cdot \sup_{\varphi\in B}\left(\sum_{|\alpha|=0}^{\infty}\frac{h^{|\alpha|q}}{\alpha!^{qs}}\|D^{\alpha}\varphi\|_{L^q(U)}^q\right)^{1/q},
\eeqs
for any $n\in\ZZ_+$. Since $\DD'^{(s)}_{L^p,h}(U;E)$ is complete it follows that the partial sums converge absolutely in $\DD'^{(s)}_{L^p,h}(U;E)$ to an element of $\DD'^{(s)}_{L^p,h}(U;E)$. The proof for $\DD'^{(s)}_{L^1}(U;E)$ is similar.
\end{proof}

Observe that each $\mathbf{F}\in\mathcal{C}(\overline{U};E)$ is in $L^p(U;E)$ for any $1\leq p\leq \infty$. To see this, note that $\mathbf{F}$ is separately valued since it is continuous and $\overline{U}$ is a subset of $\RR^d$. Moreover it is easy to see that it is weakly measurable. Hence Pettis' theorem implies that $\mathbf{F}$ is strongly measurable. Now the claim follows since $U$ is bounded $\|\mathbf{F}(\cdot)\|_E$ is in $L^p(U)$, for any $1\leq p\leq \infty$.

\begin{thm}\label{990}
Let $\mathbf{f}\in \DD'^{(s)}_{L^1}(U;E)$ and $1\leq p\leq \infty$. Then there exists $h>0$ and $\mathbf{F}_{\alpha}\in \mathcal{C}(\overline{U};E)$, $\alpha\in\NN^d$, such that
\beq\label{370}
\left(\sum_{\alpha}\frac{\alpha!^{ps}}{h^{|\alpha|p}}\|\mathbf{F}_{\alpha}\|_{L^p(U;E)}^p\right)^{1/p}<\infty
\eeq
and $\ds\mathbf{f}=\sum_{|\alpha|=0}^{\infty} D^{\alpha} \mathbf{F}_{\alpha}$, where the series converges absolutely in $\DD'^{(s)}_{L^1}(U;E)$.\\
\indent Conversely, let $\mathbf{F}_{\alpha}\in L^p(U;E)$, $\alpha\in\NN^d$, be such that (\ref{370}) holds. Then there exists $\mathbf{f}\in \DD'^{(s)}_{L^1}(U;E)$ such that $\ds\mathbf{f}=\sum_{|\alpha|=0}^{\infty} D^{\alpha} \mathbf{F}_{\alpha}$ and the series converges absolutely in $\DD'^{(s)}_{L^1}(U;E)$.
\end{thm}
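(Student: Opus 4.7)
My plan is to first deduce the converse direction directly from Theorem \ref{350}. Given $\mathbf{F}_\alpha\in L^p(U;E)$ satisfying (\ref{370}), Theorem \ref{350} yields absolute convergence of $\sum_\alpha D^\alpha\mathbf{F}_\alpha$ in $\DD'^{(s)}_{L^p,h}(U;E)$ for $1<p<\infty$, and directly in $\DD'^{(s)}_{L^1}(U;E)$ for $p\in\{1,\infty\}$. For $1<p<\infty$ I would transfer the convergence via the continuous inclusion $\DD'^{(s)}_{L^p,h}(U;E)\hookrightarrow \DD'^{(s)}_{L^1}(U;E)$, obtained by applying the $\varepsilon$-product functor $\cdot\,\varepsilon\,E$ to the continuous dense scalar inclusion $\dot{\BB}^{(s)}(U)\hookrightarrow \DD^{(s)}_{L^q,h}(U)$ furnished by Proposition \ref{50}.

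For the forward direction, the strategy is to pull $\mathbf{f}$ apart via nuclearity so that the scalar Proposition \ref{330} can be invoked uniformly. First I would view $\mathbf{f}\in\DD'^{(s)}_{L^1}(U;E)$ as a continuous linear map $\mathbf{f}:\dot{\BB}^{(s)}(U)\to E$ and use metrizability of the Fr\'echet space $\dot{\BB}^{(s)}(U)$ to produce $h_1>0$ for which $\mathbf{f}$ extends continuously from $\DD^{(s)}_{L^\infty,h_1}(U)$ to $E$. Nuclearity of $\dot{\BB}^{(s)}(U)$ (Proposition \ref{60}) then yields $h_2>h_1$ for which the linking inclusion $\iota:\DD^{(s)}_{L^\infty,h_2}(U)\hookrightarrow \DD^{(s)}_{L^\infty,h_1}(U)$ is a nuclear operator, admitting a summable decomposition $\iota(\varphi)=\sum_n\lambda_n\langle T_n,\varphi\rangle\psi_n$ with $\sum_n|\lambda_n|<\infty$, $\{T_n\}$ bounded in $\DD'^{(s)}_{L^1,h_2}(U)$ and $\{\psi_n\}$ bounded in $\DD^{(s)}_{L^\infty,h_1}(U)$. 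Composing with $\mathbf{f}$ and setting $e_n:=\mathbf{f}(\psi_n)\in E$ (a bounded sequence), I obtain
\[
\mathbf{f}(\varphi)=\sum_{n}\lambda_n\langle T_n,\varphi\rangle e_n,\qquad \varphi\in\dot{\BB}^{(s)}(U).
\]

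Next I would apply the uniform version of Proposition \ref{330} to the bounded set $\{T_n\}\subset\DD'^{(s)}_{L^1}(U)$ at the prescribed $p$: it produces $h,C>0$ (independent of $n$) and $F_{\alpha,n}\in\mathcal{C}(\overline{U})$ with $T_n=\sum_\alpha D^\alpha F_{\alpha,n}$ and $\bigl(\sum_\alpha \alpha!^{ps}/h^{|\alpha|p}\,\|F_{\alpha,n}\|_{L^\infty(U)}^p\bigr)^{1/p}\leq C$. The candidate representatives are then
\[
\mathbf{F}_\alpha(x)=\sum_{n}\lambda_n F_{\alpha,n}(x)e_n,
\]
which converge absolutely in $\mathcal{C}(\overline{U};E)$ since $\|F_{\alpha,n}\|_{L^\infty(U)}\leq Ch^{|\alpha|}/\alpha!^s$ uniformly in $n$ while $\sum_n|\lambda_n|\|e_n\|_E<\infty$. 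Minkowski's inequality in the weighted $\ell^p_\alpha$ norm delivers the required estimate (\ref{370}) with constant $C\sup_n\|e_n\|_E\sum_n|\lambda_n|$. The identity $\mathbf{f}=\sum_\alpha D^\alpha\mathbf{F}_\alpha$ is verified by testing against $\varphi\in\dot{\BB}^{(s)}(U)$ and interchanging the absolutely convergent $n$- and $\alpha$-sums, and absolute convergence in $\DD'^{(s)}_{L^1}(U;E)$ is already supplied by the converse direction.

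The main obstacle will be the nuclear decomposition step: one must carefully leverage the nuclearity recorded in Proposition \ref{60} (not merely the compactness of the linking inclusions) to produce the Grothendieck-type summable factorization $\iota=\sum_n\lambda_n T_n\otimes\psi_n$, thus representing $\mathbf{f}$ literally as a countable sum of elementary tensors. Once this representation is in hand, the remainder is a routine synthesis of the scalar structure theorem with Minkowski's inequality and interchange of summations.
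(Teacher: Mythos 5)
Your proposal is correct and follows essentially the same route as the paper: the converse from Theorem \ref{350}, and the forward direction by writing $\mathbf{f}(\varphi)=\sum_n\lambda_n\langle T_n,\varphi\rangle e_n$ with $\sum_n|\lambda_n|<\infty$, applying the uniform part of Proposition \ref{330} to the bounded family $\{T_n\}$, and setting $\mathbf{F}_\alpha=\sum_n\lambda_n F_{\alpha,n}e_n$. The only cosmetic difference is that the paper obtains the summable tensor decomposition by quoting directly that a continuous linear map from the nuclear space $\dot{\BB}^{(s)}(U)$ into a Banach space is nuclear, whereas you rederive this by factoring through a Banach step and using nuclearity of a linking inclusion --- which is precisely the standard proof of that quoted fact, so the two arguments coincide in substance.
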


\begin{proof} First, note that the second part of the theorem follows by Theorem \ref{350}. To prove the first part, let $\mathbf{f}\in\DD'^{(s)}_{L^1}(U;E)=\mathcal{L}_b\left(\dot{\BB}^{(s)}(U), E\right)$. Since $\dot{\BB}^{(s)}(U)$ is nuclear (by Proposition \ref{60}) and $E$ is a $(B)$-space $\mathbf{f}$ is nuclear. Hence there exists a sequence $e_j$, $j\in\NN$, in the closed unit ball of $E$, an equicontinuous sequence $f_j$, $j\in\NN$, of $\DD'^{(s)}_{L^1}(U)$ and a complex sequence $\lambda_j$, $j\in\NN$, such that $\sum_j |\lambda_j|<\infty$, such that
\beqs
\mathbf{f}(\varphi)=\sum_{j=0}^{\infty}\lambda_j \langle f_j,\varphi\rangle e_j.
\eeqs
Since $\{f_j|j\in\NN\}$ is equicontinuous subset of $\DD'^{(s)}_{L^1}(U)$, it is bounded and by Proposition \ref{330}, there exist $h,C>0$ and $F_{j,\alpha}\in \mathcal{C}(\overline{U})$ such that
\beqs
f_j=\sum_{|\alpha|=0}^{\infty} D^{\alpha} F_{j,\alpha} \mbox{ and } \sup_j \left(\sum_{\alpha\in\NN^d} \frac{\alpha!^{ps}}{h^{|\alpha|p}}\|F_{j,\alpha}\|_{L^{\infty}(U)}^p\right)^{1/p}\leq C.
\eeqs
Define $\mathbf{F}_{\alpha}(x)=\sum_j \lambda_j F_{j,\alpha}(x)e_j$. To prove that $\mathbf{F}_{\alpha}\in \mathcal{C}(\overline{U};E)$, observe that for each $j\in\NN$, $\lambda_j F_{j,\alpha}(x)e_j\in \mathcal{C}(\overline{U};E)$ and the series $\sum_j \lambda_j F_{j,\alpha}(x)e_j$ converges absolutely in the $(B)$-space $\mathcal{C}(\overline{U};E)$. Hence $\mathbf{F}_{\alpha}\in\mathcal{C}(\overline{U};E)$. Moreover
\beqs
\frac{\alpha!^s}{h^{|\alpha|}}\|\mathbf{F}_{\alpha}(x)\|_E\leq \sum_{j=0}^{\infty}|\lambda_j|\frac{\alpha!^s}{h^{|\alpha|}}\left\|F_{j,\alpha}\right\|_{L^{\infty}(U)}\leq C\sum_{j=0}^{\infty}|\lambda_j|,\quad \mbox{for all}\,\, x\in \overline{U}.
\eeqs
We obtain $\ds \sup_{\alpha}\frac{\alpha!^s}{h^{|\alpha|}}\|\mathbf{F}_{\alpha}\|_{\mathcal{C}(\overline{U};E)}<\infty$. Since $U$ is bounded, (\ref{370}) holds for any $h_1>h$. One easily verifies that the series $\sum_{j,\alpha}\lambda_j \langle D^{\alpha} F_{j,\alpha},\varphi\rangle e_j$ converges absolutely in $E$ for each fixed $\varphi\in \dot{\BB}^{(s)}(U)$. Hence $\ds\mathbf{f}(\varphi)=\sum_{|\alpha|=0}^{\infty} D^{\alpha}\mathbf{F}_{\alpha}(\varphi)$, for each fixed $\varphi\in \dot{\BB}^{(s)}(U)$. By Theorem \ref{350}, $\ds\sum_{|\alpha|=0}^{\infty} D^{\alpha}\mathbf{F}_{\alpha}$ converges absolutely in $\DD'^{(s)}_{L^1}(U;E)$, hence $\ds\mathbf{f}=\sum_{|\alpha|=0}^{\infty} D^{\alpha}\mathbf{F}_{\alpha}$.
\end{proof}

\section{On the  Cauchy problem in  $\tilde{{\mathcal D}}'^s_{L^p,h}(0,T;E)$}\label{910}

In this section $E$ is the $(B)$-space with the norm $\|\cdot\|$, and $D(A)$ is the domain of a closed linear operator $A$, endowed with the graph norm $\|u\|_{D(A)}=\|u\|+\|Au\|$. We use standard notation for the symbols $R(\lambda:A)$, $\rho(A)$. The results obtained in previous sections will often be applied in the one dimensional case (i.e. $d=1$) when a bounded open set $U$ is equal to the interval $(0,T)$. In this case we will use the more descriptive notations $L^p(0,T;E)$, $\DD^s_{L^p,h}(0,T)$, $\DD^{(s)}_{L^p,h}(0,T)$, $\dot{\BB}^{(s)}(0,T)$, $\DD'^{(s)}_{L^p,h}(0,T)$, $\DD'^{(s)}_{L^1}(0,T)$, $\DD'^{(s)}_{L^p,h}(0,T;E)$ and $\DD'^{(s)}_{L^1}(0,T;E)$ for the spaces $L^p(U;E)$, $\DD^{s}_{L^p,h}(U)$, $\DD^{(s)}_{L^p,h}(U)$, $\dot{\BB}^{(s)}(U)$, $\DD'^{(s)}_{L^p,h}(U)$, $\DD'^{(s)}_{L^1}(U)$, $\DD'^{(s)}_{L^p,h}(U;E)$ and $\DD'^{(s)}_{L^1}(U;E)$, respectively. Note that by Sobolev imbedding theorem, every derivative of $\varphi\in \DD^s_{L^p,h}(0,T)$ can be extended to uniformly continuous function on $[0,T]$. As in \cite{d81}, we define the $E$-valued Sobolev space $W^{1,p}(0,T;E)$ as the space of all $\mathbf{F}:[0,T]\rightarrow E$, such that $\ds \mathbf{F}(t)=F_0+\int_0^t \mathbf{F}'(s)ds$, $t\in[0,T]$, for some $F_0\in E$ and $\mathbf{F}'(t)\in L^p(0,T;E)$, with the norm $\|\mathbf{F}\|_{W^{1,p}(0,T:E)}=\|\mathbf{F}\|_{L^p(0,T;E)}+\|\mathbf{F}'\|_{L^p(0,T;E)}$, $1\leq p<\infty$. Observe that if $\mathbf{F}\in W^{1,p}(0,T;E)$ then $\mathbf{F}$ is continuous function with values in $E$ which is a.e. differentiable and its derivative is equal to $\mathbf{F}'$ a.e.\\
\indent Let $1\leq p<\infty$. Define $\tilde{{\mathcal D}}^{'s}_{L^p,h}(0,T;E)$ as a space of all sequences $\mathbf{f}=(\mathbf{F}_{\alpha})_{\alpha}$, $\mathbf{F}_{\alpha}\in L^p(0,T;E)$, $\alpha\in\NN$, such that
\beq\label{510}
\|\mathbf{f}\|_{{\tilde{\mathcal D}}^{'s}_{L^p,h}(0,T;E)}=\left(\sum_{\alpha\in\NN}\frac{{\alpha}!^{ps}}{h^{p\alpha}} \|\mathbf{F}_{\alpha}\|^p_{L^p(0,T;E)}\right)^{1/p}<\infty.
\eeq
One easily verifies that it is a $(B)$-space with the norm (\ref{510}). Each $\mathbf{f}\in\tilde{{\mathcal D}}_{L^p,h}^{'s}(0,T;E)$ generates an element of $\mathcal{L}\left({\mathcal D}_{L^q,h}^s(0,T),E\right)$ by
\beqs
\langle \mathbf{f},\varphi\rangle=\mathbf{f}(\varphi)=\sum\limits_{\alpha\in \NN}(-1)^{\alpha}\int_0^T \mathbf{F}_{\alpha}(t)\varphi^{(\alpha)}(t)\, dt\in E.
\eeqs
Moreover, one easily verifies that the mapping $\mathbf{f}\mapsto \langle \mathbf{f},\cdot \rangle$, $\tilde{{\mathcal D}}_{L^p,h}^{'s}(0,T;E)\rightarrow \mathcal{L}_b\left({\mathcal D}_{L^q,h}^s(0,T),E\right)$ is continuous.

\begin{rem} It is worth to note that this mapping is not injective. To see this let $\psi\in \DD^{(s)}(0,T)$, $\psi\neq 0$. Take nonzero element $e$ of $E$ and define $\mathbf{F}(x)=\psi'(x)e$ and $\mathbf{G}(x)=\psi(x)e$, $x\in(0,T)$. Obviously $\mathbf{F},\mathbf{G}\in L^p(0,T;E)$, for any $1\leq p\leq \infty$. Define $\mathbf{f},\mathbf{g}\in \tilde{{\mathcal D}}_{L^p,h}^{'s}(0,T;E)$ by $\mathbf{f}=(\mathbf{F},0,0,...)$ and $\mathbf{g}=(0,\mathbf{G},0,...)$. Observe that, for $\varphi\in {\mathcal D}_{L^q,h}^s(0,T)$,
\beqs
\langle \mathbf{f},\varphi\rangle=e\int_0^T \psi'(x)\varphi(x)dx=-e\int_0^T\psi(x)\varphi'(x)dx=\langle \mathbf{g},\varphi\rangle.
\eeqs
Hence $\langle \mathbf{f},\cdot\rangle$ and $\langle \mathbf{g}, \cdot\rangle$ are the same element of $\mathcal{L}_b\left({\mathcal D}_{L^q,h}^s(0,T),E\right)$.
\end{rem}


Note that $L^p(0,T;E)$ can be continuously imbedded in $\tilde{{\mathcal D}}^{'s}_{L^p,h}(0,T;E)$ by $\mathbf{F}\mapsto (\mathbf{F},0,0,\ldots)$.\\
\indent Let $1\leq p<\infty$. We define $\tilde{{\mathcal D}}^{'s}_{W^{1,p},h}(0,T;E)$ as the space of all sequences $\mathbf{f}=(\mathbf{F}_{\alpha})_{\alpha}$, where $\mathbf{F}_{\alpha}\in W^{1,p}(0,T;E)$ and
\beqs
\|\mathbf{f}\|_{\tilde{{\mathcal D}}^{'s}_{W^{1,p},h}(0,T;E)}=\left(\sum_{\alpha\in\NN}\frac{{\alpha}!^{ps}}{h^{p\alpha}} \left(\|\mathbf{F}_{\alpha}\|^p_{L^p(0,T;E)}+\|\mathbf{F}'_{\alpha}\|^p_{L^p(0,T;E)}\right)\right)^{1/p}<\infty.
\eeqs
Equipped with the norm $\|\cdot\|_{\tilde{{\mathcal D}}^{'s}_{W^{1,p},h}(0,T;E)}$, it becomes a $(B)$-space.\\ $\tilde{{\mathcal D}}^{'s}_{W^{1,p},h}(0,T;E)$ is continuously injected into $\tilde{{\mathcal D}}^{'s}_{L^p,h}(0,T;E)$. For $\mathbf{f}=(\mathbf{F}_{\alpha})_{\alpha}\in\tilde{{\mathcal D}}^{'s}_{W^{1,p},h}(0,T;E)$, $\mathbf{f}'=\tilde{\mathbf{f}}=(\tilde{\mathbf{F}}_{\alpha})_{\alpha}\in \tilde{{\mathcal D}}^{'s}_{L^p,h}(0,T;E)$, where $\tilde{\mathbf{F}}_{\alpha}=\mathbf{F}'_{\alpha}$ is the classical derivative a.e. in $(0,T)$.\\ Moreover, the mapping $\mathbf{f}\mapsto \mathbf{f}'$, $\tilde{{\mathcal D}}^{'s}_{W^{1,p},h}(0,T;E)\rightarrow \tilde{{\mathcal D}}^{'s}_{L^p,h}(0,T;E)$, is continuous.\\
\indent Our main assumption is that the Hille-Yosida condition
holds for the resolvent of the operator $A$:
\beq\label{1410}
\|(\lambda-\omega)^k R(\lambda:A)^k\|\leq C, \mbox{ for } \lambda>\omega,\, k\in\ZZ_+.
\eeq
From now on we will always denote these constants by $\omega$ and $C$.

\subsection{Various types of solutions}

We need the following technical lemma.

\begin{lem}\label{1295}
Let $1\leq p<\infty$ and $\mathbf{g}=(\mathbf{G}_{\alpha})_{\alpha}\in\tilde{{\mathcal D}}^{'s}_{L^p,h}(0,T;D(A))$. Then for every $\varphi\in\DD^{s}_{L^q}(0,T)$, $\langle \mathbf{g},\varphi\rangle \in D(A)$ and
\beqs
A\sum_{\alpha=0}^{\infty}(-1)^{\alpha}\int_0^T \mathbf{G}_{\alpha}(t)\varphi^{(\alpha)}(t)dt= \sum_{\alpha=0}^{\infty}(-1)^{\alpha}\int_0^T A\mathbf{G}_{\alpha}(t)\varphi^{(\alpha)}(t)dt.
\eeqs
\end{lem}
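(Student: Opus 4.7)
My plan rests on two ingredients: the closedness of $A$, together with absolute convergence of the defining series both for $(\mathbf{G}_\alpha)_\alpha$ and for $(A\mathbf{G}_\alpha)_\alpha$, which is guaranteed by the hypothesis $\mathbf{g}\in\tilde{\DD}^{'s}_{L^p,h}(0,T;D(A))$ since the graph norm controls both $\|\mathbf{G}_\alpha\|_{L^p(0,T;E)}$ and $\|A\mathbf{G}_\alpha\|_{L^p(0,T;E)}$.

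First I would handle each individual Bochner integral $I_\alpha:=\int_0^T \mathbf{G}_\alpha(t)\varphi^{(\alpha)}(t)\,dt$. Since $\mathbf{G}_\alpha\in L^p(0,T;D(A))$ and $\varphi^{(\alpha)}$ extends to a continuous, hence bounded, function on $[0,T]$, the product $\mathbf{G}_\alpha\,\varphi^{(\alpha)}$ is Bochner integrable in $D(A)$ equipped with the graph norm. Because the map $D(A)\to E\times E$, $x\mapsto(x,Ax)$, is an isometry onto a closed subspace (this is precisely the closedness of $A$), the Bochner integral computed in $D(A)$ agrees in the first coordinate with the integral computed in $E$, and its image under $A$ equals $\int_0^T A\mathbf{G}_\alpha(t)\varphi^{(\alpha)}(t)\,dt$. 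Hence $I_\alpha\in D(A)$ and $AI_\alpha=\int_0^T A\mathbf{G}_\alpha(t)\varphi^{(\alpha)}(t)\,dt$.

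Next I would form the partial sums $S_N=\sum_{\alpha=0}^{N}(-1)^\alpha I_\alpha$, which lie in $D(A)$ by linearity, with $AS_N=\sum_{\alpha=0}^{N}(-1)^\alpha\int_0^T A\mathbf{G}_\alpha(t)\varphi^{(\alpha)}(t)\,dt$. Applying H\"older in $t$ to each summand and then H\"older in $\alpha$ yields
$$\sum_{\alpha=0}^{\infty}\|I_\alpha\|\le\Bigl(\sum_{\alpha}\tfrac{\alpha!^{ps}}{h^{p\alpha}}\|\mathbf{G}_\alpha\|_{L^p(0,T;E)}^p\Bigr)^{1/p}\Bigl(\sum_{\alpha}\tfrac{h^{q\alpha}}{\alpha!^{qs}}\|\varphi^{(\alpha)}\|_{L^q(0,T)}^q\Bigr)^{1/q}<\infty,$$
and the analogous estimate with $A\mathbf{G}_\alpha$ in place of $\mathbf{G}_\alpha$ on the right-hand side, thanks to the $D(A)$-valued assumption. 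Consequently $S_N\to\langle\mathbf{g},\varphi\rangle$ in $E$ and $AS_N$ converges in $E$ to the right-hand side of the claimed identity.

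Finally, closedness of $A$ applied to the convergent pair $(S_N,AS_N)$ gives $\langle\mathbf{g},\varphi\rangle\in D(A)$ and the stated formula. The only delicate step is the commutation of $A$ with each individual Bochner integral, where the realization of $D(A)$ as a closed subspace of $E\times E$ under the graph norm is essential; the rest is routine convergence bookkeeping made possible by the $D(A)$-valued membership of $\mathbf{g}$.
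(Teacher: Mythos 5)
Your proposal is correct and follows essentially the same route as the paper's proof: commute $A$ with each individual Bochner integral (the paper asserts this directly for the closed operator $A$, you justify it via the graph as a closed subspace of $E\times E$), establish absolute summability in the graph norm from the $\tilde{\DD}^{'s}_{L^p,h}(0,T;D(A))$ hypothesis, and pass to the limit. The only cosmetic difference is that the paper concludes by noting the series converges absolutely in the Banach space $D(A)$ and then interchanges $A$ with the sum, whereas you invoke closedness on the pair $(S_N,AS_N)$; these are equivalent formulations of the same final step.
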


\begin{proof} First observe that for each $\alpha\in \NN$, $\mathbf{G}_{\alpha}\varphi^{(\alpha)}\in L^1(0,T;D(A))$ and $A\mathbf{G}_{\alpha}\varphi^{(\alpha)}\in L^1(0,T;E)$ since $\mathbf{G}_{\alpha}(t)\in L^p(0,T;D(A))$ and $\varphi\in \DD^{s}_{L^q}(0,T)$. Then
\beq\label{1270}
A\int_0^T \mathbf{G}_{\alpha}(t)\varphi^{(\alpha)}(t)dt=\int_0^T A\mathbf{G}_{\alpha}(t)\varphi^{(\alpha)}(t)dt.
\eeq
Moreover, observe that
\beqs
\sum_{\alpha=0}^{\infty}\left\|\int_0^T \mathbf{G}_{\alpha}(t)\varphi^{(\alpha)}(t)dt\right\|_{D(A)}\leq \left\|(\mathbf{G}_{\alpha})_{\alpha}\right\|_{\tilde{{\mathcal D}}^{'s}_{L^p,h}(0,T;D(A))}\|\varphi\|_{\DD^{s}_{L^q}(0,T)}.
\eeqs
We obtain that $\ds \sum_{\alpha=0}^{\infty}(-1)^{\alpha}\int_0^T \mathbf{G}_{\alpha}(t)\varphi^{(\alpha)}(t)dt$ converges absolutely in $D(A)$, i.e. $\langle \mathbf{g},\varphi\rangle \in D(A)$. Hence
\beqs
A\sum_{\alpha=0}^{\infty}(-1)^{\alpha}\int_0^T \mathbf{G}_{\alpha}(t)\varphi^{(\alpha)}(t)dt= \sum_{\alpha=0}^{\infty}(-1)^{\alpha}A\int_0^T \mathbf{G}_{\alpha}(t)\varphi^{(\alpha)}(t)dt,
\eeqs
which, together with (\ref{1270}), completes the proof of the lemma.
\end{proof}

Let $u_{0,\alpha}\in E$, $\alpha\in\NN$, be such that
\beq\label{1290}
\left(\sum_{\alpha=0}^{\infty}\frac{\alpha!^{ps}}{h^{p\alpha}}\|u_{0,\alpha}\|^p_E\right)^{1/p}<\infty.
\eeq
Then the constant functions $\tilde{\mathbf{U}}_{\alpha}(t)=u_{0,\alpha}$, $t\in[0,T]$, are such that $\tilde{\mathbf{U}}_{\alpha}\in L^p(0,T;E)$ and (\ref{510}) holds. Hence $(\tilde{\mathbf{U}}_{\alpha})_{\alpha}\in \tilde{{\mathcal D}}'^s_{L^p,h}(0,T;E)$. In the sequel, if $u_{0,\alpha}$, $\alpha\in\NN$, are such elements we will denote the corresponding constant functions simply by $u_{0,\alpha}$ and the element $(u_{0,\alpha})_{\alpha}$ of $\tilde{{\mathcal D}}'^s_{L^p,h}(0,T;E)$ that they generate by $u_0$. We also use the notation $\|u_0\|_{\tilde{{\mathcal D}}'^s_{L^p,h}(0,T;E)}$ for the norm of this element of $\tilde{{\mathcal D}}'^s_{L^p,h}(0,T;E)$.\\
\indent We recall from \cite{d81} the definition of two types of
solutions of the Cauchy problem (\ref{ACP}) (here they are
restated to fit in our setting). We also define weak version
of them. Let $A: D(A)\subseteq E\rightarrow E$ be a closed linear
operator in the
$(B)$-space $E$, $\mathbf{f}\in\tilde{{\mathcal D}}^{'s}_{L^p,h}(0,T;E)$ and $u_{0,\alpha}\in E$, $\alpha\in \NN$.\\
\indent  \textbf{1.} We say that
$\mathbf{u}=(\mathbf{U}_{\alpha})_{\alpha}$ is a strict solution, respectively, strict weak solution,
in $\tilde{{\mathcal D}}^{'s}_{L^p,h}(0,T;E)$ of (\ref{ACP}) if
$\mathbf{u}\in \tilde{{\mathcal
D}}^{'s}_{W^{1,p},h}(0,T;E)\cap\tilde{{\mathcal
D}}^{'s}_{L^p,h}(0,T;D(A))$ and \beqs
\mathbf{U}'_{\alpha}(t)=A\mathbf{U}_{\alpha}(t)+\mathbf{F}_{\alpha}(t),\,
t\in[0,T]\, a.e. \mbox{ and }
\mathbf{U}_{\alpha}(0)=u_{0,\alpha},\, \forall \alpha\in\NN, \eeqs
respectively,
for each $\varphi\in
\DD^{s}_{L^q,h}(0,T)$ it satisfies \beq\label{ACP1} \langle
\mathbf{u}'(t),\varphi(t)\rangle=A\langle
\mathbf{u}(t),\varphi(t)\rangle+\langle
\mathbf{f}(t),\varphi(t)\rangle \mbox{ and }
\mathbf{U}_{\alpha}(0)=u_{0,\alpha},\, \forall \alpha\in\NN. \eeq

We know by Lemma \ref{1295} that
$\langle \mathbf{u}(t),\varphi(t)\rangle\in D(A)$ for each
$\varphi\in\DD^{s}_{L^q,h}(0,T).$
 Also, note that in both cases (of  strict or of strict weak solution
of (\ref{ACP})) we have \beqs \|u_{0,\alpha}\|^p_E\leq
2^pT^{-1}\|\mathbf{U}_{\alpha}\|_{L^p(0,T;E)}+2^pT^{p/q}\|\mathbf{U}'_{\alpha}\|_{L^p(0,T;E)}.
\eeqs
Hence $u_0=(u_{0,\alpha})_{\alpha}$ satisfies (\ref{1290}).\\
\indent \textbf{2.} We say that $\mathbf{u}\in\tilde{{\mathcal
D}}^{'s}_{L^p,h}(0,T;E)$ is an $F$-solution, respectively, $F$-weak solution  in $\tilde{{\mathcal
D}}^{'s}_{L^p,h}(0,T;E)$ of (\ref{ACP}), if for every
$k\in{\mathbb N}$ there is
$\mathbf{u}_k=(\mathbf{U}_{k,\alpha})_{\alpha}\in\tilde{{\mathcal
D}}^{'s}_{W^{1,p},h}(0,T;E)\cap\tilde{{\mathcal
D}}^{'s}_{L^p,h}(0,T;D(A))$ such that from \beqs
\mathbf{U}_{k,\alpha}'(t)=A\mathbf{U}_{k,\alpha}(t)+\mathbf{F}_{k,\alpha}(t),\,
t\in[0,T]\, a.e. \mbox{ and }
\mathbf{U}_{k,\alpha}(0)=u_{0,k,\alpha} \eeqs we have \beqs
\lim\limits_{k\rightarrow\infty}\Big(\|\mathbf{u}_k-\mathbf{u}\|_{\tilde{{\mathcal
D}}^{'s}_{L^p,h}(0,T;E)}+
\|\mathbf{f}_k-\mathbf{f}\|_{\tilde{{\mathcal
D}}^{'s}_{L^p,h}(0,T;E)}+\eeqs  \beqs+\|u_{0,k}-u_0\|_{\tilde{{\mathcal
D}}'^s_{L^p,h}(0,T;E)}\Big)=0, \eeqs
respectively,   from \beqs
\langle \mathbf{u}_k'(t),\varphi(t)\rangle&=&A\langle \mathbf{u}_k(t),\varphi(t)\rangle+\langle \mathbf{f}_k(t),\varphi(t)\rangle,\, \forall \varphi\in \DD^s_{L^q,h}(0,T)\\
&{}&\mbox{and } \mathbf{U}_{k,\alpha}(0)=u_{0,k,\alpha},\, \forall k,\alpha\in\NN
\eeqs
we have that for every $\varphi\in{\mathcal D}^s_{L^q,h}(0,T)$,
\beq\label{F-solution2}
\lim\limits_{k\rightarrow\infty}\left(\|\langle \mathbf{u}_k-\mathbf{u},\varphi\rangle\|_E+
\|\langle \mathbf{f}_k-\mathbf{f},\varphi\rangle\|_E+\|\langle u_{0,k}-u_0,\varphi\rangle\|_E\right)=0.
\eeq
From the above definitions it is clear that a strict, resp. a strict weak solution, in
$\tilde{{\mathcal D}}^{'s}_{L^p,h}(0,T;E)$ is an $F$-solution, resp. $F$-weak solution in $\tilde{{\mathcal D}}^{'s}_{L^p,h}(0,T;E)$.

\begin{rem} If a strict weak solution of (\ref{ACP}) in $\tilde{{\mathcal D}}^{'s}_{L^p,h}(0,T;E)$ exists then it is not unique. To see this let $\psi\in \DD^{(s)}(0,T)$ and $e\in D(A)$ such that $\psi\neq 0$ and $e\neq 0$. Define $\mathbf{v}=(\mathbf{V}_{\alpha})_{\alpha}\in\tilde{{\mathcal D}}^{'s}_{L^p,h}(0,T;E)$ by $\mathbf{V}_0(t)=\psi'(t)e$, $\mathbf{V}_1(t)=-\psi(t)e$ and $\mathbf{V}_{\alpha}(t)=0$, for $\alpha\geq 2$, $\alpha\in \NN$. Obviously $\mathbf{v}\in\tilde{{\mathcal D}}^{'s}_{W^{1,p},h}(0,T;E)\cap\tilde{{\mathcal D}}^{'s}_{L^p,h}(0,T;D(A))$ and $\mathbf{V}_{\alpha}(0)=0$, $\forall \alpha\in\NN$. Moreover, it is easy to verify that the operators $\langle \mathbf{v},\cdot\rangle,\langle \mathbf{v}',\cdot\rangle\in \mathcal{L}\left({\mathcal D}_{L^q,h}^s(0,T),E\right)$ are in fact the zero operator. Hence, if $\mathbf{u}$ is a strict weak solution of (\ref{ACP}) in $\tilde{{\mathcal D}}^{'s}_{L^p,h}(0,T;E)$ then so is $\mathbf{u}+\mathbf{v}$.\\
\indent One can use the same construction to prove that the $F$-weak solution in $\tilde{{\mathcal D}}^{'s}_{L^p,h}(0,T;E)$ of (\ref{ACP}) is also not unique.
\end{rem}

\subsection{The existence of  solutions}
Now we consider the existence of such solutions of the Cauchy problem (\ref{ACP}).

\begin{prop}\label{58}
If $\mathbf{u}$ is a strict, resp. a $F$-solution, of the Cauchy problem (\ref{ACP}), then it is also strict weak, resp. $F$-weak solution, of (\ref{ACP}).
\end{prop}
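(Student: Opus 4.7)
The plan is to verify each implication directly by testing against $\varphi\in\DD^s_{L^q,h}(0,T)$, using Lemma \ref{1295} as the key technical tool.

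First I would handle the strict case. Fix a strict solution $\mathbf{u}=(\mathbf{U}_\alpha)_\alpha\in\tilde{\DD}^{'s}_{W^{1,p},h}(0,T;E)\cap\tilde{\DD}^{'s}_{L^p,h}(0,T;D(A))$ and $\varphi\in\DD^s_{L^q,h}(0,T)$. Multiplying the identity $\mathbf{U}'_\alpha(t)=A\mathbf{U}_\alpha(t)+\mathbf{F}_\alpha(t)$ by $(-1)^\alpha\varphi^{(\alpha)}(t)$, integrating over $[0,T]$, and summing on $\alpha$, I obtain
\beqs
\langle\mathbf{u}',\varphi\rangle=\sum_{\alpha\in\NN}(-1)^\alpha\int_0^T A\mathbf{U}_\alpha(t)\varphi^{(\alpha)}(t)\,dt+\langle\mathbf{f},\varphi\rangle.
\eeqs
The absolute convergence of the sums follows since $\mathbf{u}\in\tilde{\DD}^{'s}_{L^p,h}(0,T;D(A))$ and $\mathbf{f}\in\tilde{\DD}^{'s}_{L^p,h}(0,T;E)$. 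Now Lemma \ref{1295} (applied to $\mathbf{g}=\mathbf{u}$ viewed in $\tilde{\DD}^{'s}_{L^p,h}(0,T;D(A))$) lets me pull the closed operator $A$ outside the series, giving $\sum_\alpha(-1)^\alpha\int_0^T A\mathbf{U}_\alpha(t)\varphi^{(\alpha)}(t)\,dt=A\langle\mathbf{u},\varphi\rangle$. Combined with the initial conditions $\mathbf{U}_\alpha(0)=u_{0,\alpha}$ inherited from the strict formulation, this is precisely (\ref{ACP1}), so $\mathbf{u}$ is a strict weak solution.

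For the $F$-case, let $\mathbf{u}_k$ be the approximating strict solutions from the definition of $F$-solution, with data $\mathbf{f}_k$ and $u_{0,k}$. By the first part of the proposition, each $\mathbf{u}_k$ is automatically a strict weak solution, i.e.\ $\langle\mathbf{u}_k',\varphi\rangle=A\langle\mathbf{u}_k,\varphi\rangle+\langle\mathbf{f}_k,\varphi\rangle$ for all $\varphi\in\DD^s_{L^q,h}(0,T)$, together with $\mathbf{U}_{k,\alpha}(0)=u_{0,k,\alpha}$. Thus the strict weak approximating sequence required in the definition of $F$-weak solution is the same $\mathbf{u}_k$. It remains to verify (\ref{F-solution2}): but the norm convergences $\|\mathbf{u}_k-\mathbf{u}\|_{\tilde{\DD}^{'s}_{L^p,h}(0,T;E)}\to 0$ (and the analogous convergences for $\mathbf{f}_k$ and $u_{0,k}$) imply the pointwise $E$-valued convergences $\|\langle\mathbf{u}_k-\mathbf{u},\varphi\rangle\|_E\to 0$, etc., thanks to the continuity of the map $\mathbf{f}\mapsto\langle\mathbf{f},\cdot\rangle$ from $\tilde{\DD}^{'s}_{L^p,h}(0,T;E)$ into $\mathcal{L}_b(\DD^s_{L^q,h}(0,T),E)$ stated just before Remark 4.2.

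The only delicate point is the justification that $A$ commutes with the infinite series defining $\langle\mathbf{u},\varphi\rangle$, but this is exactly what Lemma \ref{1295} is designed to provide; everything else is an immediate consequence of continuity of the canonical embedding $\tilde{\DD}^{'s}_{L^p,h}(0,T;E)\to\mathcal{L}_b(\DD^s_{L^q,h}(0,T),E)$.
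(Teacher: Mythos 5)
Your proof is correct and follows the same route as the paper, which simply cites Lemma \ref{1295} together with the continuity of $\mathbf{g}\mapsto\langle\mathbf{g},\cdot\rangle$ from $\tilde{{\mathcal D}}^{'s}_{L^p,h}(0,T;E)$ into $\mathcal{L}_b\left({\mathcal D}^s_{L^q,h}(0,T),E\right)$; you have merely written out the details of how these two facts yield the two implications.
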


\begin{proof} The proof follows from Lemma \ref{1295} and the fact that the mapping $\mathbf{g}\mapsto\langle\mathbf{g},\cdot\rangle$, $\tilde{{\mathcal D}}_{L^p,h}^{'s}(0,T;E)\rightarrow \mathcal{L}_b\left({\mathcal D}_{L^q,h}^s(0,T),E\right)$ is continuous.
\end{proof}

The proof of the next theorem heavily relies on the results
obtained in \cite{d81}. Parts in brackets are consequences of Proposition \ref{58}.

\begin{thm}\label{Fthm}
\begin{itemize}
\item[$i)$] The Cauchy problem (\ref{ACP}) has an $F$-solution (resp. an $F$-weak solution) in ${\tilde{{\mathcal D}}^{'s}_{L^p,h}(0,T;E)}$ for every $\mathbf{f}=(\mathbf{F}_{\alpha})_{\alpha}\in{\tilde{{\mathcal D}}^{'s}_{L^p,h}(0,T;E)}$ and $u_0=(u_{0,\alpha})_{\alpha}$ such that $(u_{0,\alpha})_{\alpha}$ satisfies (\ref{1290}) and $u_{0,\alpha}\in\overline{D(A)}$, $\forall \alpha\in\NN$. In the case of $F$-solution, it is unique.

\item[$ii)$] The Cauchy problem (\ref{ACP}) has a strict solution (resp. strict weak solution) in ${\tilde{{\mathcal D}}^{'s}_{L^p,h}(0,T;E)}$ for every $\mathbf{f}=(\mathbf{F}_{\alpha})_{\alpha}\in{\tilde{{\mathcal D}}^{'s}_{W^{1,p},h}(0,T;E)}$ and $u_0=(u_{0,\alpha})_{\alpha}$ such that $u_{0,\alpha}\in D(A)$ and $Au_{0,\alpha}+\mathbf{F}_{\alpha}(0)\in \overline{D(A)}$, $\forall\alpha\in\NN$ and $(u_{0,\alpha})_{\alpha}$ and $(Au_{0,\alpha})_{\alpha}$ satisfies (\ref{1290}). In the case of strict solution, it is unique.
\end{itemize}
\end{thm}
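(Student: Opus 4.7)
The plan is to apply the scalar existence theory of Da Prato--Sinestrari \cite{d81} coordinate-by-coordinate in $\alpha\in\NN$ and lift the conclusions using the summability built into the spaces $\tilde{\DD}^{'s}_{L^p,h}(0,T;E)$. Fix $\alpha\in\NN$ and consider the scalar Cauchy problem
\[
\mathbf{U}_\alpha'(t)=A\mathbf{U}_\alpha(t)+\mathbf{F}_\alpha(t),\quad \mathbf{U}_\alpha(0)=u_{0,\alpha},\qquad t\in[0,T].
\]
Under the hypotheses of $i)$, the data $\mathbf{F}_\alpha\in L^p(0,T;E)$ and $u_{0,\alpha}\in\overline{D(A)}$ are exactly what \cite{d81} requires in order to produce a unique $F$-solution $\mathbf{U}_\alpha\in L^p(0,T;E)$; under those of $ii)$, \cite{d81} furnishes a unique strict solution $\mathbf{U}_\alpha\in W^{1,p}(0,T;E)\cap L^p(0,T;D(A))$. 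Setting $\mathbf{u}=(\mathbf{U}_\alpha)_\alpha$, it remains to show that $\mathbf{u}$ lies in the asserted vector-valued space and satisfies the corresponding solution definition.

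The central quantitative ingredient is the a priori estimate contained in the proofs of \cite{d81}: there is a constant $M=M(T,\omega,C)$, independent of $\alpha$, such that in case $i)$
\[
\|\mathbf{U}_\alpha\|_{L^p(0,T;E)}\leq M\bigl(\|u_{0,\alpha}\|_E+\|\mathbf{F}_\alpha\|_{L^p(0,T;E)}\bigr),
\]
with an analogous $W^{1,p}$-bound in case $ii)$ that also controls $\|A\mathbf{U}_\alpha\|_{L^p(0,T;E)}$ in terms of $\|u_{0,\alpha}\|_E+\|Au_{0,\alpha}\|_E+\|\mathbf{F}_\alpha\|_{W^{1,p}(0,T;E)}$. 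Taking $p$-th powers, multiplying by $\alpha!^{ps}/h^{p\alpha}$ and summing in $\alpha$, the hypothesis (\ref{1290}) on $u_0$ (and on $Au_0$ in case $ii)$), together with $\mathbf{f}\in\tilde{\DD}^{'s}_{L^p,h}$ (resp.\ $\tilde{\DD}^{'s}_{W^{1,p},h}$), delivers at once $\mathbf{u}\in\tilde{\DD}^{'s}_{L^p,h}(0,T;E)$ in case $i)$ and $\mathbf{u}\in\tilde{\DD}^{'s}_{W^{1,p},h}(0,T;E)\cap\tilde{\DD}^{'s}_{L^p,h}(0,T;D(A))$ in case $ii)$. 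Uniqueness lifts coordinatewise from the scalar uniqueness of $F$- and strict solutions in \cite{d81}, while Proposition~\ref{58} supplies the strict-weak and $F$-weak parts of the statement for free.

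It remains, in case $i)$, to verify the $F$-solution definition by exhibiting strict approximating solutions $\mathbf{u}_k$ with $(\mathbf{u}_k,\mathbf{f}_k,u_{0,k})\to(\mathbf{u},\mathbf{f},u_0)$ in $\tilde{\DD}^{'s}_{L^p,h}(0,T;E)$. The natural choice is the Yosida regularization $u_{0,k,\alpha}:=J_ku_{0,\alpha}$, with $J_k=kR(k:A)$, combined with a $W^{1,p}$ time-regularization $\mathbf{F}_{k,\alpha}$ of $\mathbf{F}_\alpha$ arranged so that $\mathbf{F}_{k,\alpha}(0)\in D(A)$ (for instance by post-composing a standard mollifier with $J_k$, after a boundary cutoff). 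Since $u_{0,\alpha}\in\overline{D(A)}$, the identity $AJ_ku_{0,\alpha}=k(J_ku_{0,\alpha}-u_{0,\alpha})$ places $Au_{0,k,\alpha}$ in $\overline{D(A)}$, so the compatibility condition $Au_{0,k,\alpha}+\mathbf{F}_{k,\alpha}(0)\in\overline{D(A)}$ holds and part $ii)$ supplies strict solutions $\mathbf{U}_{k,\alpha}$. The uniform Hille--Yosida bound $\|J_k\|\leq C$ yields $\|u_{0,k,\alpha}-u_{0,\alpha}\|_E\leq(C+1)\|u_{0,\alpha}\|_E$ (and similarly for $\mathbf{F}_{k,\alpha}$), so dominated convergence against the summable dominants $\alpha!^{ps}h^{-p\alpha}\|u_{0,\alpha}\|_E^p$ and $\alpha!^{ps}h^{-p\alpha}\|\mathbf{F}_\alpha\|_{L^p(0,T;E)}^p$, together with the pointwise convergences $J_ku_{0,\alpha}\to u_{0,\alpha}$ valid on $\overline{D(A)}$ and the mollifier convergence in $L^p(0,T;E)$, gives $u_{0,k}\to u_0$ and $\mathbf{f}_k\to\mathbf{f}$ in $\tilde{\DD}^{'s}_{L^p,h}$; applying the a priori estimate of the previous paragraph to the differences $\mathbf{U}_{k,\alpha}-\mathbf{U}_\alpha$ then yields $\mathbf{u}_k\to\mathbf{u}$ in the same space. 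The main obstacle is precisely to engineer the compatibility correction for $\mathbf{F}_{k,\alpha}$ in such a way that the $\alpha$-weighted summability is preserved and dominated convergence applies; the uniform Hille--Yosida bound is what makes this possible.
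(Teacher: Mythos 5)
Your coordinatewise strategy, the use of the Da Prato--Sinestrari a priori estimates with constants independent of $\alpha$, the weighted summation to place $\mathbf{u}$ in $\tilde{{\mathcal D}}^{'s}_{L^p,h}(0,T;E)$ (resp.\ $\tilde{{\mathcal D}}^{'s}_{W^{1,p},h}(0,T;E)\cap\tilde{{\mathcal D}}^{'s}_{L^p,h}(0,T;D(A))$), the coordinatewise uniqueness, and the appeal to Proposition~\ref{58} for the weak variants all match the paper's proof. The one place where you genuinely diverge is the verification that $\mathbf{u}$ is an $F$-solution in the vector-valued space. The paper does not construct new approximating data at all: it takes, for each $\alpha$, the approximating triples $(\mathbf{U}_{k,\alpha},\mathbf{F}_{k,\alpha},u_{0,k,\alpha})$ that come for free from the \emph{definition} of the scalar $F$-solution (Theorem 7.2 of \cite{d81}), and assembles them by a truncation/diagonal argument --- choose $n_k$ so the weighted tails beyond $n_k$ are small, pick for each $\alpha<n_k$ an index $k_\alpha$ making the finitely many head terms small, and set everything else to zero. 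This sidesteps any question of uniformity in $\alpha$ of the scalar approximation rates, which is exactly the point your dominated-convergence argument has to work to recover.

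There is a concrete flaw in your explicit regularization: post-composing the mollified $\mathbf{F}_\alpha$ with $J_k=kR(k:A)$ forces $\mathbf{F}_{k,\alpha}$ to take values in $D(A)$, but $J_k g\to g$ holds only for $g\in\overline{D(A)}$, while the data $\mathbf{F}_\alpha\in L^p(0,T;E)$ need not take values in $\overline{D(A)}$; so $\mathbf{F}_{k,\alpha}\to\mathbf{F}_\alpha$ in $L^p(0,T;E)$ would fail in general and the required convergence $\mathbf{f}_k\to\mathbf{f}$ breaks. The fix is cheap --- the compatibility condition of part $ii)$ only involves $\mathbf{F}_{k,\alpha}(0)$, so a cutoff vanishing at $t=0$ followed by mollification gives $\mathbf{F}_{k,\alpha}(0)=0\in D(A)$ and hence $Au_{0,k,\alpha}+\mathbf{F}_{k,\alpha}(0)=k(J_ku_{0,\alpha}-u_{0,\alpha})\in\overline{D(A)}$, with uniform $L^p$ bounds that make your dominated-convergence step go through --- but as written the $J_k$ post-composition should be dropped. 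With that repair your route is a valid, self-contained alternative (it essentially re-proves the scalar $F$-solution existence inside the vector-valued framework), at the cost of more bookkeeping than the paper's diagonal argument, which delegates all regularization to the cited scalar theorems.
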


\begin{proof} First we will prove $i)$. By Theorem 7.2 of \cite{d81} (see also the Appendix of \cite{d81}) for each fixed $\alpha\in\NN$, the problem $\mathbf{U}'_{\alpha}=A\mathbf{U}_{\alpha}+\mathbf{F}_{\alpha}$, $\mathbf{U}_{\alpha}(0)=u_{0,\alpha}$ has a $F$-solution in $L^p(0,T;E)$. In other words, there exist $\mathbf{U}_{k,\alpha}\in W^{1,p}(0,T;E)\cap L^p(0,T;D(A))$, $\mathbf{F}_{k,\alpha}\in L^p(0,T;E)$, $u_{0,k,\alpha}\in E$, $k\in\ZZ_+$, such that $\mathbf{U}'_{k,\alpha}=A\mathbf{U}_{k,\alpha}+\mathbf{F}_{k,\alpha}$, $\mathbf{U}_{k,\alpha}(0)=u_{0,k,\alpha}$ and
\beq\label{970}
\lim_{k\rightarrow\infty}\Big(\|\mathbf{U}_{k,\alpha}-\mathbf{U}_{\alpha}\|_{L^p(0,T;E)}+ \|\mathbf{F}_{k,\alpha}-\mathbf{F}_{\alpha}\|_{L^p(0,T;E)}+\eeq \beqs\|u_{0,k,\alpha}-u_{0,\alpha}\|_E\Big)=0.
\eeqs
Moreover, by Theorem 5.1 of \cite{d81} (see also Theorem A.1 of the Appendix of \cite{d81}), each $\mathbf{U}_{\alpha}$ is in fact in $\mathcal{C}(0,T;E)$, $\mathbf{U}_{\alpha}(t)\in \overline{D(A)}$, $\forall t\in[0,T]$, $\mathbf{U}_{\alpha}(0)=u_{0,\alpha}$ and
\beq\label{975}
\|\mathbf{U}_{\alpha}(t)\|\leq C e^{\omega t}\left(\|\mathbf{U}_{\alpha}(0)\|+\int_0^t e^{-\omega s} \|\mathbf{F}_{\alpha}(s)\|ds\right),\, t\in[0,T].
\eeq
Using this estimate one easily verifies that $\mathbf{u}=(\mathbf{U}_{\alpha})_{\alpha}\in \tilde{{\mathcal D}}^{'s}_{L^p,h}(0,T;E)$. We will prove that this is an $(F)$-solution of (\ref{ACP}).\\
\indent Let $k\in\ZZ_+$. Take $n_k\in\ZZ_+$ such that
$$
\sum_{\alpha=n_k}^{\infty}\frac{\alpha!^{ps}}{h^{p\alpha}} \|\mathbf{F}_{\alpha}\|^p_{L^p(0,T;E)}\leq \frac{1}{(2k)^p},\, \sum_{\alpha=n_k}^{\infty}\frac{\alpha!^{ps}}{h^{p\alpha}} \|\mathbf{U}_{\alpha}\|^p_{L^p(0,T;E)}\leq \frac{1}{(2k)^p}$$
$$
\mbox{and }\sum_{\alpha=n_k}^{\infty}\frac{\alpha!^{ps}}{h^{p\alpha}} \|u_{0,\alpha}\|^p_E\leq \frac{1}{(2k)^p}.
$$
For each $0\leq \alpha\leq n_k-1$, by (\ref{970}) we can take $\mathbf{F}_{k_{\alpha},\alpha}$, $\mathbf{U}_{k_{\alpha},\alpha}$ and $u_{0,k_{\alpha},\alpha}$ such that
\beqs
\sum_{\alpha=0}^{n_k-1}\frac{\alpha!^{ps}}{h^{p\alpha}}\Big(\|\mathbf{U}_{k_{\alpha},\alpha}- \mathbf{U}_{\alpha}\|^p_{L^p(0,T;E)}&+& \|\mathbf{F}_{k_{\alpha},\alpha}-\mathbf{F}_{\alpha}\|^p_{L^p(0,T;E)}\\
&+&\|u_{0,k_{\alpha},\alpha}-u_{0,\alpha}\|^p_E\Big)\leq \frac{1}{(2k)^p}
\eeqs
and $\mathbf{U}'_{k_{\alpha},\alpha}=A\mathbf{U}_{k_{\alpha},\alpha}+\mathbf{F}_{k_{\alpha},\alpha}$, $\mathbf{U}_{k_{\alpha},\alpha}(0)=u_{0,k_{\alpha},\alpha}$. For $0\leq \alpha\leq n_k-1$ define $\mathbf{V}_{k,\alpha}=\mathbf{U}_{k_{\alpha},\alpha}$, $v_{0,k,\alpha}=u_{0,k_{\alpha},\alpha}$ and $\mathbf{G}_{k,\alpha}=\mathbf{F}_{k_{\alpha},\alpha}$. For $\alpha\geq n_k$ put $\mathbf{V}_{k,\alpha}=0$, $v_{0,k,\alpha}=0$ and $\mathbf{G}_{k,\alpha}=0$. Then $\mathbf{v}_k=(\mathbf{V}_{k,\alpha})_{\alpha}\in \tilde{{\mathcal D}}^{'s}_{W^{1,p},h}(0,T;E)\cap\tilde{{\mathcal D}}^{'s}_{L^p,h}(0,T;D(A))$, $\mathbf{g}_k=(\mathbf{G}_{k,\alpha})_{\alpha}\in \tilde{{\mathcal D}}^{'s}_{L^p,h}(0,T;E)$ and $v_{0,k}=(v_{0,k,\alpha})_{\alpha}$ is such that $\ds\sum_{\alpha=0}^{\infty}\frac{(\alpha!)^{ps}}{h^{p\alpha}}\|v_{0,k,\alpha}\|^p_E<\infty$. Also $\mathbf{v}_k(0)=v_{0,k}$. By definition, we have $\mathbf{V}'_{k,\alpha}=A\mathbf{V}_{k,\alpha}+\mathbf{G}_{k,\alpha}$ for all $\alpha\in\NN$. We will prove that $\mathbf{v}_k\rightarrow \mathbf{u}$, $\mathbf{g}_k\rightarrow \mathbf{f}$ and $v_{0,k}\rightarrow u_0$ in $\tilde{{\mathcal D}}^{'s}_{L^p,h}(0,T;E)$, hence $\mathbf{u}$ is $F$-solution of (\ref{ACP}). Let $\varepsilon>0$. Take $k_0\in \ZZ_+$ such that $1/k_0\leq \varepsilon$. For $k\geq k_0$, $k\in\ZZ_+$, we have\\
\\
$\|\mathbf{v}_k-\mathbf{u}\|^p_{\tilde{{\mathcal D}}^{'s}_{L^p,h}(0,T;E)}$
\beqs
&=&\sum_{\alpha=0}^{n_k-1}\frac{\alpha!^{ps}}{h^{p\alpha}}\|\mathbf{V}_{k,\alpha}- \mathbf{U}_{\alpha}\|^p_{L^p(0,T;E)}+\sum_{\alpha=n_k}^{\infty}\frac{\alpha!^{ps}}{h^{p\alpha}} \|\mathbf{U}_{\alpha}\|^p_{L^p(0,T;E)}\\
&\leq& \sum_{\alpha=0}^{n_k-1}\frac{\alpha!^{ps}}{h^{p\alpha}}\|\mathbf{U}_{k_{\alpha},\alpha}- \mathbf{U}_{\alpha}\|^p_{L^p(0,T;E)}+\frac{\varepsilon^p}{2^p}\leq \frac{2\varepsilon^p}{2^p}.
\eeqs
Hence $\|\mathbf{v}_k-\mathbf{u}\|_{\tilde{{\mathcal D}}^{'s}_{L^p,h}(0,T;E)}\leq \varepsilon$. Similarly, $\|\mathbf{g}_k-\mathbf{f}\|_{\tilde{{\mathcal D}}^{'s}_{L^p,h}(0,T;E)}\leq \varepsilon$ and $\ds\left(\sum_{\alpha=0}^{\infty}\frac{\alpha!^{ps}}{h^{p\alpha}}\|v_{0,k,\alpha}- u_{0,\alpha}\|^p_{L^p(0,T;E)}\right)^{1/p}\leq \varepsilon$, for $k\geq k_0$. It remains to prove the uniqueness. If $\tilde{\mathbf{u}}=(\tilde{\mathbf{U}}_{\alpha})_{\alpha}\in \tilde{{\mathcal D}}^{'s}_{L^p,h}(0,T;E)$ is another $F$-solution of (\ref{ACP}) then $\tilde{\mathbf{U}}_{\alpha}$ is a $F$-solution to the problem $\tilde{\mathbf{U}}'_{\alpha}(t)=A\tilde{\mathbf{U}}_{\alpha}(t)+\mathbf{F}_{\alpha}(t)$, $\tilde{\mathbf{U}}_{\alpha}(0)=u_{0,\alpha}$, for each $\alpha\in\NN$. But, theorem 5.1 of \cite{d81} (see also Theorem A.1 of the Appendix of \cite{d81}) implies that the $F$-solution to this problem must be unique, hence $\tilde{\mathbf{U}}_{\alpha}=\mathbf{U}_{\alpha}$ which proofs the desired uniqueness.\\
\indent To prove $ii)$, observe that Theorem 8.1 of \cite{d81} (see also Theorem A.2 of the Appendix of \cite{d81}) implies that for each $\alpha\in\NN$ there exists $\mathbf{U}_{\alpha}\in \mathcal{C}^1(0,T;E)\cap \mathcal{C}(0,T;D(A))$ such that
\beq\label{980}
\mathbf{U}'_{\alpha}(t)=A\mathbf{U}_{\alpha}(t)+\mathbf{F}_{\alpha}(t),\, \forall t\in[0,T] \mbox{ and } \mathbf{U}_{\alpha}(0)=u_{0,\alpha}
\eeq
and it satisfy (\ref{975}) and
\beq\label{987}
\|\mathbf{U}'_{\alpha}(t)\|\leq C e^{\omega t}\left(\|Au_{0,\alpha}+\mathbf{F}_{\alpha}(0)\|+\int_0^t e^{-\omega s} \|\mathbf{F}'_{\alpha}(s)\|ds\right),\, t\in[0,T].
\eeq
Moreover, by (\ref{980}) and (\ref{987}), we have
\beqs
\|A\mathbf{U}_{\alpha}(t)\|\leq C e^{2|\omega|T}\left(\|Au_{0,\alpha}\|+\|\mathbf{F}_{\alpha}(0)\|+T^{1/q} \|\mathbf{F}'_{\alpha}\|_{L^p(0,T;E)}\right)+\|\mathbf{F}_{\alpha}(t)\|,\, t\in[0,T].
\eeqs
Since $\mathbf{f}\in\tilde{{\mathcal D}}^{'s}_{W^{1,p},h}(0,T;E)$ and $(u_{0,\alpha})_{\alpha}$ and $(Au_{0,\alpha})_{\alpha}$ satisfy (\ref{1290}), by the above estimate and (\ref{975}) and (\ref{987}) we can conclude\\ $\mathbf{u}=(\mathbf{U}_{\alpha})_{\alpha}\in \tilde{{\mathcal D}}^{'s}_{W^{1,p},h}(0,T;E) \cap\tilde{{\mathcal D}}^{'s}_{L^p,h}(0,T;D(A))$. Hence $\mathbf{u}$ is a strict solution. The uniqueness follows from Theorem 8.1 of \cite{d81} (see also Theorem A.2 of the Appendix of \cite{d81}) by similar arguments as in $i)$.
\end{proof}

\subsection{Solutions in $\DD'^{(s)}_{L^1}(0,T;E)$}

Let $\mathbf{g}\in\DD'^{(s)}_{L^1}(0,T;E)$. By Theorem \ref{990} for $1<p<\infty$, there exists $h_1>0$ and $\mathbf{G}_{\alpha}\in L^p(0,T;E)$, $\alpha\in\NN$, such that
\beq\label{1100}
\sum_{\alpha=0}^{\infty}\frac{\alpha!^{ps}}{h_1^{p\alpha}}\|\mathbf{G}_{\alpha}\|^p_{L^p(0,T;E)}<\infty \mbox{ and } \mathbf{g}=\sum_{\alpha=0}^{\infty}\mathbf{G}^{(\alpha)}_{\alpha}.
\eeq
For the moment, for $\mathbf{g}\in \DD'^{(s)}_{L^1}(0,T;E)=\mathcal{L}_b\left(\dot{\BB}^{(s)}(0,T),E\right)$, denote by $\mathbf{g}(\varphi)$ the action of $\mathbf{g}$ on $\varphi\in\dot{\BB}^{(s)}(0,T)$. On the other hand, put $\tilde{\mathbf{g}}=(\mathbf{G}_{\alpha})_{\alpha}\in \tilde{{\mathcal D}}^{'s}_{L^p,h}(0,T;E)$. By the way we define the operator $\langle \tilde{\mathbf{g}},\cdot\rangle \in \mathcal{L}_b\left({\mathcal D}^{(s)}_{L^q,h}(0,T),E\right)$, one easily verifies that $\mathbf{g}(\varphi)=\langle \tilde{\mathbf{g}},\varphi\rangle$ for all $\varphi\in \dot{\BB}^{(s)}(0,T)\subseteq {\mathcal D}^{(s)}_{L^q,h}(0,T)$. Hence, if $\mathbf{g}\in\DD'^{(s)}_{L^1}(0,T;E)$ has the representation (\ref{1100}) we will denote by $\langle\mathbf{g},\cdot\rangle$ the action $\mathbf{g}(\cdot)$.\\
\indent Let $\mathbf{g}\in\DD'^{(s)}_{L^1}(0,T;E)$ has the representation (\ref{1100}). Define $\tilde{\mathbf{G}}_0=0$ and $\ds \tilde{\mathbf{G}}_{\alpha}(t)=\int_0^t \mathbf{G}_{\alpha-1}(s)ds$, $t\in[0,T]$ for $\alpha\in\ZZ_+$. Then, obviously, $\tilde{\mathbf{G}}_{\alpha}\in W^{1,p}(0,T;E)$, $\tilde{\mathbf{G}}_{\alpha}(0)=0$, $\tilde{\mathbf{G}}'_{\alpha}=\mathbf{G}_{\alpha-1}$ a.e. for all $\alpha\in\ZZ_+$, and if we put $h>h_1$ we have
\beq\label{1120}
\sum_{\alpha=0}^{\infty}\frac{\alpha!^{ps}}{h^{p\alpha}}\left(\|\tilde{\mathbf{G}}_{\alpha}\|^p_{L^p(0,T;E)}+ \|\tilde{\mathbf{G}}'_{\alpha}\|^p_{L^p(0,T;E)}\right)<\infty.
\eeq
By Theorem \ref{990}, $\sum_{\alpha=1}^{\infty} \tilde{\mathbf{G}}^{(\alpha)}_{\alpha}\in \DD'^{(s)}_{L^1}(0,T;E)$. Also, for $\varphi\in \dot{\BB}^{(s)}(0,T)$,
\beqs
\sum_{\alpha=1}^{\infty}(-1)^{\alpha}\int_0^T \tilde{\mathbf{G}}_{\alpha}(t)\varphi^{(\alpha)}(t)dt&=& \sum_{\alpha=0}^{\infty}(-1)^{\alpha}\int_0^T \tilde{\mathbf{G}}'_{\alpha+1}(t)\varphi^{(\alpha)}(t)dt\\
&=&\sum_{\alpha=0}^{\infty}(-1)^{\alpha}\int_0^T \mathbf{G}_{\alpha}(t)\varphi^{(\alpha)}(t)dt=\langle \mathbf{g},\varphi\rangle,
\eeqs
i.e. $\mathbf{g}=\sum_{\alpha=1}^{\infty} \tilde{\mathbf{G}}^{(\alpha)}_{\alpha}$. In other words, for $\mathbf{g}\in \DD'^{(s)}_{L^1}(0,T;E)$ and $1<p<\infty$ we can always find $h>0$ such that $\mathbf{g}=\sum_{\alpha} \tilde{\mathbf{G}}^{(\alpha)}_{\alpha}$, where $\tilde{\mathbf{G}}_{\alpha}\in W^{1,p}(0,T;E)$, $\tilde{\mathbf{G}}_{\alpha}(0)=0$, $\alpha\in\NN$, such that (\ref{1120}) holds. Moreover, in this notation, if we put $\tilde{\mathbf{f}}=(\tilde{\mathbf{G}}'_{\alpha})_{\alpha}\in \tilde{{\mathcal D}}^{'s}_{L^p,h}(0,T;E)$, then $\langle\tilde{\mathbf{f}},\cdot\rangle$ and the $E$-valued ultradistribution $\mathbf{g}'\in \DD'^{(s)}_{L^1}(0,T;E)$ (where $\mathbf{g}'$ is the ultradistributional derivative of $\mathbf{g}$) generate the same element in $\DD'^{(s)}_{L^1}(0,T;E)\cong\mathcal{L}_b\left(\dot{\BB}^{(s)}(0,T),E\right)$. To see this, for $\varphi\in \dot{\BB}^{(s)}(0,T)$ we calculate as follows
\beqs
\langle\tilde{\mathbf{f}},\varphi\rangle=\sum_{\alpha=0}^{\infty}(-1)^{\alpha}\int_0^T \tilde{\mathbf{G}}'_{\alpha}(t)\varphi^{(\alpha)}(t)dt=-\sum_{\alpha=0}^{\infty}(-1)^{\alpha}\int_0^T \tilde{\mathbf{G}}_{\alpha}(t)\varphi^{(\alpha+1)}(t)dt
\eeqs
which is exactly the value at $\varphi$ of the ultradistributional derivative of $\mathbf{g}\in \DD'^{(s)}_{L^1}(0,T;E)$.\\
\indent We consider the equation $\mathbf{u}'=A\mathbf{u}+\mathbf{f}$ in $\DD'^{(s)}_{L^1}(0,T;E)$. In other words, $\mathbf{f}\in \DD'^{(s)}_{L^1}(0,T;E)$ is given, we search for $\mathbf{u}\in \DD'^{(s)}_{L^1}(0,T;E)$ such that, for every $\varphi\in \dot{\BB}^{(s)}(0,T)$, $\langle \mathbf{u},\varphi\rangle \in D(A)$ and $\langle\mathbf{u}',\varphi\rangle=A\langle\mathbf{u},\varphi\rangle+\langle\mathbf{f},\varphi\rangle$. By the above discussion, for $1<p<\infty$, there exists $h>0$ and $\mathbf{F}_{\alpha}\in W^{1,p}(0,T;E)$, $\mathbf{F}_{\alpha}(0)=0$, $\alpha\in\NN$, such that (\ref{1120}) holds (with $\mathbf{F}_{\alpha}$ and $\mathbf{F}'_{\alpha}$ in place of $\tilde{\mathbf{G}}_{\alpha}$ and $\tilde{\mathbf{G}}'_{\alpha}$) and $\mathbf{f}=\sum_{\alpha=0}^{\infty}\mathbf{F}^{(\alpha)}_{\alpha}$. If we put $\tilde{\mathbf{f}}=(\mathbf{F}_{\alpha})_{\alpha}$, then $\tilde{\mathbf{f}}\in \tilde{{\mathcal D}}^{'s}_{W^{1,p},h}(0,T;E)$. For $u_{0,\alpha}=0\in D(A)$ put $u_0=(u_{0,\alpha})_{\alpha}$. Then the conditions of Theorem \ref{Fthm} $ii)$ are satisfied, hence there exists $\tilde{\mathbf{u}}=(\mathbf{U}_{\alpha})_{\alpha}\in \tilde{{\mathcal D}}^{'s}_{W^{1,p},h}(0,T;E)\cap \tilde{{\mathcal D}}^{'s}_{L^p,h}(0,T;D(A))$ which is a strict weak solution of $\tilde{\mathbf{u}}'=A\tilde{\mathbf{u}}+\tilde{\mathbf{f}}$ in $\tilde{{\mathcal D}}^{'s}_{L^p,h}(0,T;E)$. If we put $\mathbf{u}=\sum_{\alpha=0}^{\infty} \mathbf{U}^{(\alpha)}_{\alpha}\in \DD'^{(s)}_{L^1}(0,T;E)$, by the above discussion, $\langle \mathbf{u},\varphi\rangle\in D(A)$, $\forall \varphi\in \dot{\BB}^{(s)}(0,T)$ (since this holds for $\tilde{\mathbf{u}}$) and $\mathbf{u}$ is a solution of $\mathbf{u}'=A\mathbf{u}+\mathbf{f}$ in $\DD'^{(s)}_{L^1}(0,T;E)$. Moreover, by Theorem \ref{350} this $\mathbf{u}$ as well as $\mathbf{f}$ are in fact elements of $\DD'^{(s)}_{L^p,h}(0,T;E)$. Thus, we proved the following theorem.

\begin{thm}\label{1220}
Let $A:D(A)\subseteq E\rightarrow E$ be a closed operator which satisfies the Hille-Yosida condition and $\mathbf{f}\in \DD'^{(s)}_{L^1}(0,T;E)$. Then the equation $\mathbf{u}'=A\mathbf{u}+\mathbf{f}$ always has a solution $\mathbf{u}\in\DD'^{(s)}_{L^1}(0,T;E)$. Moreover, $\mathbf{u}\in\DD'^{(s)}_{L^p,h}(0,T;E)$ where $1<p<\infty$ and $h>0$ are such that
\beqs
\sum_{\alpha=0}^{\infty}\frac{\alpha!^{ps}}{h^{p\alpha}}\left(\|\mathbf{F}_{\alpha}\|^p_{L^p(0,T;E)}+ \|\mathbf{F}'_{\alpha}\|^p_{L^p(0,T;E)}\right)<\infty,
\eeqs
with $\mathbf{f}=\sum_{\alpha}\mathbf{F}^{(\alpha)}_{\alpha}$, where $\mathbf{F}_{\alpha}\in W^{1,p}(0,T;E)$, $\mathbf{F}_{\alpha}(0)=0$, $\alpha\in\NN$.
\end{thm}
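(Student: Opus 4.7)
The plan is to reduce the theorem to the sequence-space setting of Section~4 and invoke Theorem \ref{Fthm}(ii). The bridge between the intrinsic space $\DD'^{(s)}_{L^1}(0,T;E)$ and the sequence space $\tilde{\mathcal D}^{'s}_{L^p,h}(0,T;E)$ was carefully arranged in the paragraphs preceding the statement, and I will exploit exactly that correspondence.

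First, I would apply Theorem \ref{990} to obtain a representation $\mathbf{f}=\sum_{\alpha}\mathbf{G}_\alpha^{(\alpha)}$ with $\mathbf{G}_\alpha\in L^p(0,T;E)$ satisfying a Gevrey-type estimate for some $h_1>0$. This representation has only $L^p$ coefficients, whereas Theorem \ref{Fthm}(ii) demands $W^{1,p}$ coefficients that are compatible with a choice of initial data in $D(A)$. To bridge this gap, I would set $\mathbf{F}_0=0$ and $\mathbf{F}_\alpha(t)=\int_0^t\mathbf{G}_{\alpha-1}(s)\,ds$ for $\alpha\ge1$. Each $\mathbf{F}_\alpha$ then lies in $W^{1,p}(0,T;E)$ with $\mathbf{F}_\alpha(0)=0$ and $\mathbf{F}'_\alpha=\mathbf{G}_{\alpha-1}$. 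Shifting the index by one, the estimate
$$\sum_\alpha \frac{\alpha!^{ps}}{h^{p\alpha}}\bigl(\|\mathbf{F}_\alpha\|^p_{L^p(0,T;E)}+\|\mathbf{F}'_\alpha\|^p_{L^p(0,T;E)}\bigr)<\infty$$
holds for any $h>h_1$, and integration by parts yields $\mathbf{f}=\sum_\alpha \mathbf{F}_\alpha^{(\alpha)}$ in $\DD'^{(s)}_{L^1}(0,T;E)$.

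Next, I would set $\tilde{\mathbf f}:=(\mathbf{F}_\alpha)_\alpha\in\tilde{\mathcal D}^{'s}_{W^{1,p},h}(0,T;E)$ and take trivial initial data $u_0=0$. Since each $u_{0,\alpha}=0\in D(A)$ and $Au_{0,\alpha}+\mathbf{F}_\alpha(0)=0\in\overline{D(A)}$, the hypotheses of Theorem \ref{Fthm}(ii) are met, yielding a strict solution $\tilde{\mathbf u}=(\mathbf U_\alpha)_\alpha \in \tilde{\mathcal D}^{'s}_{W^{1,p},h}(0,T;E)\cap\tilde{\mathcal D}^{'s}_{L^p,h}(0,T;D(A))$ with $\mathbf U'_\alpha = A\mathbf U_\alpha+\mathbf F_\alpha$ a.e.\ and $\mathbf U_\alpha(0)=0$. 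Defining $\mathbf u:=\sum_\alpha \mathbf U_\alpha^{(\alpha)}$, Theorem \ref{350} ensures absolute convergence in $\DD'^{(s)}_{L^p,h}(0,T;E)$, hence also in $\DD'^{(s)}_{L^1}(0,T;E)$. To verify the equation, I would test against an arbitrary $\varphi\in\dot{\BB}^{(s)}(0,T)\subseteq\DD^{(s)}_{L^q,h}(0,T)$: Lemma \ref{1295} shows $\langle\mathbf u,\varphi\rangle\in D(A)$ and allows $A$ to commute with the series; then the sequence-level identity $\langle\tilde{\mathbf u}',\varphi\rangle = A\langle\tilde{\mathbf u},\varphi\rangle+\langle\tilde{\mathbf f},\varphi\rangle$ translates to $\langle\mathbf u',\varphi\rangle = A\langle\mathbf u,\varphi\rangle+\langle\mathbf f,\varphi\rangle$ because, as explained before the theorem, the action of $(\mathbf U_\alpha)_\alpha$ on $\varphi$ coincides with the action of $\sum_\alpha \mathbf U_\alpha^{(\alpha)}\in\DD'^{(s)}_{L^1}(0,T;E)$, and similarly for the derivative.

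The main obstacle is not analytic but one of careful bookkeeping: one must ensure that the sequence-level formulation and the intrinsic ultradistributional formulation agree when evaluated on $\dot{\BB}^{(s)}(0,T)$, in particular that the sequence-level derivative represents the ultradistributional derivative, and that the $D(A)$-valued action (needed to give meaning to $A\mathbf u$) is preserved under this identification. The identifications worked out in the paragraphs immediately preceding the theorem supply precisely this, so the work is essentially assembling existing pieces rather than overcoming a new difficulty.
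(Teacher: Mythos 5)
Your proposal is correct and follows essentially the same route as the paper: Theorem \ref{990} gives the $L^p$ representation, the antiderivative construction $\mathbf{F}_\alpha(t)=\int_0^t\mathbf{G}_{\alpha-1}(s)\,ds$ produces $W^{1,p}$ coefficients vanishing at $0$, Theorem \ref{Fthm}$(ii)$ with zero initial data yields the sequence-level solution, and the identifications preceding the theorem transfer it back to $\DD'^{(s)}_{L^1}(0,T;E)$. This matches the paper's argument step for step.
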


\section{Applications}

Theorem \ref{1220} is applicable in variety of different
situations. We collect some of them in the next proposition. First
we need the following definition given in \cite{Pazy}.

\begin{defn}
Let $\Omega$ be bounded open domain with smooth boundary in $\RR^d$ and $m\in\ZZ_+$. We say that $A(x,\partial_x)=\sum_{|\alpha|\leq 2m} a_{\alpha}(x)\partial^{\alpha}_x$ where $a_{\alpha}\in\mathcal{C}^{2m}(\overline{\Omega})$, is strongly elliptic if there exists $c>0$ such that
\beqs
\mathrm{Re}(-1)^m\sum_{|\alpha|=2m}a_{\alpha}(x)\xi^{\alpha}\geq c|\xi|^{2m},\, \forall x\in\overline{U},\, \forall \xi\in\RR^d.
\eeqs
\end{defn}

\begin{prop}
The operator $A:D(A)\subseteq E\rightarrow E$ is closed operator which satisfies the Hille-Yosida condition in each of the following situations:
\begin{itemize}
\item[$i)$] (\cite{d81}) $E=\mathcal{C}([0,1])$, $Av=-v'$, $D(A)=\{v\in\mathcal{C}^1([0,1])|\, v(0)=0\}$;
\item[$ii)$] (\cite{d81}) for $\kappa\in(0,1)$, $E=\mathcal{C}^{\kappa}_0([0,1])=\{v\in\mathcal{C}^{\kappa}([0,1])|\, v(0)=0\}$, $Av=-v'$, $D(A)=\{v\in\mathcal{C}^{1+\kappa}([0,1])|\, v(0)=v'(0)=0\}$;
\item[$iii)$] (\cite{d81}) $E=\mathcal{C}([0,1])$, $Av=v''$, $D(A)=\{v\in\mathcal{C}^2([0,1])|\, v(0)=v(1)=0\}$;
\item[$iv)$] (\cite{d81}) for $\Omega$ bounded open set with regular boundary in $\RR^d$, $E=\mathcal{C}(\overline{\Omega})$, $Av=\Delta v$, $D(A)=\{v\in\mathcal{C}(\overline{\Omega})|\, v_{| \partial \Omega}=0,\, \Delta v\in \mathcal{C}(\overline{\Omega})\}$ (here $\Delta$ is the Laplacian in the sense of distributions in $\Omega$);
\item[$v)$] (\cite{Pazy}) let $\Omega$ be bounded open domain with smooth boundary in $\RR^d$ and $m\in\ZZ_+$. Let $A(x,\partial_x)$ be strongly elliptic. Define $E=L^p(\Omega)$, $Av=-A(x,\partial_x)v$, $D(A)=W^{2m,p}(\Omega)\cap W^{m,p}_0(\Omega)$, for $1<p<\infty$ and for $p=1$ define $E=L^1(\Omega)$, $Av=-A(x,\partial_x)v$, $D(A)=\{v\in W^{2m-1,1}(\Omega)\cap W^{m,1}_0(\Omega)|\, A(x,\partial_x)v\in L^1(\Omega)\}$.
\end{itemize}
In particular, for $\mathbf{f}\in\DD'^{(s)}_{L^1}(0,T;E)$, the equation $\mathbf{u}'_t= A\mathbf{u}+\mathbf{f}$ always has solution in $\DD'^{(s)}_{L^1}(0,T;E)$.
\end{prop}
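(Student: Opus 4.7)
The statement decomposes into two largely independent tasks: first, verifying that in each of the five settings (i)--(v) the operator $A$ is closed on its listed domain and satisfies the Hille--Yosida estimate \eqref{1410}; and second, deducing the concluding assertion about the Cauchy problem in $\DD'^{(s)}_{L^1}(0,T;E)$. The second task is immediate: Theorem \ref{1220} is stated for any closed $A$ on a $(B)$-space $E$ satisfying the Hille--Yosida condition, with \emph{no} assumption of dense domain, so once the resolvent bound is in place the existence of $\mathbf{u}\in\DD'^{(s)}_{L^1}(0,T;E)$ with $\mathbf{u}'_t=A\mathbf{u}+\mathbf{f}$ follows at once.

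My plan for the five verifications is to reduce each case to the cited literature rather than re-derive the estimates. For (i)--(iv) each operator is precisely one of the guiding examples in \cite{d81}, where the resolvent is computed either explicitly (by integration of the first-order ODE in (i),(ii); by the one-dimensional Green's function in (iii); by Perron's method together with the maximum principle in (iv)) and the uniform bound $\|(\lambda-\omega)^kR(\lambda:A)^k\|\le C$ is read off directly. Crucially, in none of (i)--(iv) is $D(A)$ dense in $E$: in (i),(iii),(iv) the domain consists of functions which, together with at least one derivative, vanish on (part of) the boundary, which is not true of a general element of $\mathcal{C}(\overline{\Omega})$; in (ii) the condition $v'(0)=0$ is not dense in $\mathcal{C}^{\kappa}_0([0,1])$. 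This is exactly the non-dense-domain situation Da Prato--Sinestrari designed their theory for, and it is also why it is important that Theorem \ref{1220} does not demand $\overline{D(A)}=E$.

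For (v) I would split into $p>1$ and $p=1$. When $1<p<\infty$, strong ellipticity together with the standard $L^p$ theory of elliptic boundary value problems gives that $-A(x,\partial_x)$ with domain $W^{2m,p}(\Omega)\cap W^{m,p}_0(\Omega)$ generates an analytic semigroup on $L^p(\Omega)$, in particular the Hille--Yosida estimate holds with $\omega$ depending on the coefficients; this is Theorem 7.3.5 of \cite{Pazy}. The case $p=1$ is the delicate one and is the principal technical hurdle of this proposition: the natural domain $W^{2m,1}\cap W^{m,1}_0$ does not behave well under the resolvent, so one is forced to the enlarged domain $\{v\in W^{2m-1,1}\cap W^{m,1}_0 : A(x,\partial_x)v\in L^1\}$, and the Hille--Yosida estimate is then obtained by duality from the corresponding $L^\infty$ estimate for the formal adjoint (again as in \cite{Pazy}). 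On this enlarged domain $A$ is closed but $\overline{D(A)}\subsetneq L^1(\Omega)$, and it is precisely this feature that makes the present framework, rather than the classical densely defined semigroup theory, necessary.

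Having verified the Hille--Yosida property in each of (i)--(v), the ``In particular'' clause is merely Theorem \ref{1220} applied case by case, with no additional argument required. The expected main obstacle, as indicated, is the $p=1$ subcase of (v); everything else is essentially bookkeeping against the references.
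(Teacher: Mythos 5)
Your proposal is correct and follows essentially the same route as the paper: the paper likewise disposes of (i)--(iv) by citing the worked examples in Section 14 of \cite{d81}, handles (v) via Theorem 7.3.5 (for $1<p<\infty$) and Theorem 7.3.10 (for $p=1$) of \cite{Pazy}, and obtains the final assertion as an immediate application of Theorem \ref{1220}. Your additional remarks on the non-density of the domains and on the $p=1$ duality argument are accurate commentary on the cited results but do not change the structure of the argument.
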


\begin{proof} The facts that $A:D(A)\subseteq E\rightarrow E$ is closed operator which satisfies the Hille-Yosida condition when $A$ and $E$ are defined as in $i)-iv)$ are proven in Section 14 of \cite{d81}. When $A$ and $E$ are defined as in $v)$ Theorem 7.3.5, pg. 214, of \cite{Pazy} for the case $1<p<\infty$, resp. Theorem 7.3.10, pg. 218, of \cite{Pazy} for the case $p=1$, implies that $A$ is closed operator which satisfies the Hille-Yosida condition (in fact these theorems state that $A$ is the infinitesimal generator of analytic semigroup on $L^p(\Omega)$, $1\leq p<\infty$). Now, the fact that the equation $\mathbf{u}'_t= A\mathbf{u}+\mathbf{f}$ has solution in $\DD'^{(s)}_{L^1}(0,T;E)$ follows from Theorem \ref{1220}.
\end{proof}

\subsection{Parabolic equation in $\DD'^{(s)}_{L^1}(U)$}

In this subsection $U$ is a bounded domain in $\RR^d$ with smooth boundary. For the brevity in notation, let $\tilde{{\mathcal D}}^{'s}_{L^p,h}(U)$, resp. $\tilde{{\mathcal D}}^{'s}_{W^{1,p},h}(U)$, be the space $\tilde{{\mathcal D}}^{'s}_{L^p,h}(0,T;E)$, resp. $\tilde{{\mathcal D}}^{'s}_{W^{1,p},h}(0,T;E)$, when $E=\CC$. Also, for $k\in\ZZ_+$, by $\tilde{{\mathcal D}}^{'s}_{W^{k,p},h}(U)$ we denote the space of all sequences $(F_{\alpha})_{\alpha}$, $F_{\alpha}\in W^{k,p}(U)$, $\forall \alpha\in\NN^d$, for which
\beqs
\left\|(F_{\alpha})_{\alpha}\right\|_{\tilde{{\mathcal D}}^{'s}_{W^{k,p},h}(U)}=\left(\sum_{\alpha\in\NN^d}\frac{{\alpha}!^{ps}}{h^{p\alpha}} \|F_{\alpha}\|^p_{W^{k,p}(U)}\right)^{1/p}<\infty.
\eeqs
It is easy to verify that it becomes a $(B)$-space with the norm $\|\cdot\|_{\tilde{{\mathcal D}}^{'s}_{W^{k,p},h}(U)}$.\\
\indent Let $m\in\ZZ_+$, $A(x,\partial_x)=\sum_{|\alpha|\leq 2m} a_{\alpha}(x)\partial^{\alpha}_x$, where $a_{\alpha}\in\EE^{(s)}(V)$ for some open set $V\subseteq \RR^d$ and $U\subset\subset V$. We assume that $A(x,\partial_x)$ is a strongly elliptic operator. Obviously, $A(x,\partial_x)$ is continuous operator on $\dot{\BB}^{(s)}(U)$ and on $\DD'^{(s)}_{L^1}(U)$. Denote by $\tilde{A}:D(\tilde{A})\subseteq L^2(U)\rightarrow L^2(U)$ the following unbounded operator
\beqs
D(\tilde{A})=W^{2m,2}(U)\cap W^{m,2}_0(U),\quad \tilde{A}(\varphi)=A(x,\partial_x)\varphi,\, \varphi\in D(\tilde{A}).
\eeqs
For such $A(x,\partial_x)$ the following a priori estimate holds (see Theorem 7.3.1, pg. 212, of \cite{Pazy}).

\begin{prop}\label{1205}\cite{Pazy}
Let $A(x,\partial_x)$ be strongly elliptic operator of order $2m$ on a bounded domain $U$ with smooth boundary $\partial U$ in $\RR^d$ and let $1<p<\infty$. Then, there exists a constant $\tilde{C}>0$ such that
\beqs
\|\varphi\|_{W^{2m,p}(U)}\leq \tilde{C}\left(\|A(x,\partial_x)\varphi\|_{L^p(U)}+\|\varphi\|_{L^p(U)}\right),\, \forall \varphi\in W^{2m,p}(U)\cap W^{m,p}_0(U).
\eeqs
\end{prop}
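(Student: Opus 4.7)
The plan is to prove the estimate by the classical Agmon--Douglis--Nirenberg localization scheme: establish constant-coefficient estimates on $\RR^d$ (interior) and on $\RR^d_+$ (boundary), then pass to variable coefficients and a bounded domain via freezing the principal part, straightening $\partial U$, and assembling via a partition of unity. The smoothness of $\partial U$ together with the continuity of the coefficients $a_\alpha$ on $\overline{U}$ supplies the uniform control of remainder terms.

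First I would treat the interior constant-coefficient model. Freeze coefficients at some $x_0\in U$ and set $A_0(\partial_x)=\sum_{|\alpha|=2m}a_\alpha(x_0)\partial^{\alpha}_x$. Strong ellipticity gives $|A_0(\xi)|\geq c|\xi|^{2m}$ for $\xi\neq 0$, so for $|\beta|\leq 2m$ the Fourier multipliers $\xi^{\beta}/(1+A_0(\xi))$ (after a smooth cutoff near the origin) satisfy Mikhlin's hypotheses, yielding $\|\partial^{\beta}\varphi\|_{L^p(\RR^d)}\leq C_p\bigl(\|A_0\varphi\|_{L^p}+\|\varphi\|_{L^p}\bigr)$ for $\varphi\in C_c^\infty(\RR^d)$ and $1<p<\infty$. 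This is exactly where the restriction on $p$ enters. For a boundary point $x_0\in\partial U$, a smooth diffeomorphism flattening $\partial U$ carries $A$ to another strongly elliptic operator on $\RR^d_+$, and the condition $\varphi\in W^{m,p}_0(U)$ becomes vanishing of $\varphi$ and its first $m-1$ normal derivatives on $\partial\RR^d_+$. Freezing coefficients and Fourier-transforming in the tangential variables $x'$ reduces to an ODE boundary value problem in the normal variable, solvable via the $m$ roots of $A_0(\xi',\tau)=0$ in the upper $\tau$-half-plane (these exist and depend smoothly on $\xi'$ by strong ellipticity). The resulting Poisson-type solution operators are shown to be $L^p$-bounded; this step is the technical core of the Agmon--Douglis--Nirenberg theorem.

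Second, I would assemble the global estimate. Pick a finite cover $\{V_j\}$ of $\overline{U}$ by interior balls and boundary charts chosen small enough that the oscillation of each top-order coefficient $a_\alpha$ on $V_j$ is $<\varepsilon$, and a subordinate partition of unity $\{\chi_j\}\subset C_c^\infty$. Apply the appropriate model estimate to $\chi_j\varphi$, noting
\begin{equation*}
A(\chi_j\varphi)=\chi_j A\varphi+[A,\chi_j]\varphi,\qquad A(\chi_j\varphi)=A_0^{(j)}(\chi_j\varphi)+\bigl(A-A_0^{(j)}\bigr)(\chi_j\varphi),
\end{equation*}
where $A_0^{(j)}$ is $A$ with principal coefficients frozen at the center of $V_j$. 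The commutator $[A,\chi_j]$ is of order $2m-1$, and the principal-part perturbation contributes at most $C\varepsilon\,\|\chi_j\varphi\|_{W^{2m,p}(V_j)}$, which is absorbed into the left-hand side for $\varepsilon$ small. Lower-order terms yield norms of $\varphi$ in $W^{2m-1,p}(U)$, which the standard interpolation inequality
\begin{equation*}
\|\varphi\|_{W^{2m-1,p}(U)}\leq \eta\,\|\varphi\|_{W^{2m,p}(U)}+C_\eta\,\|\varphi\|_{L^p(U)}
\end{equation*}
absorbs for $\eta$ small, leaving only $\|A\varphi\|_{L^p(U)}+\|\varphi\|_{L^p(U)}$ on the right.

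The main obstacle is clearly the $L^p$ half-space estimate with Dirichlet-type data. For $p=2$ one could invoke Gårding's inequality and Hilbert-space energy methods, but for general $p\in(1,\infty)$ one truly needs Calderón--Zygmund theory for the singular kernels arising from the ODE reduction, plus uniform bounds on the stable roots of $A_0(\xi',\tau)$; writing this out carefully is the lengthy part of the argument, which is why one cites Theorem 7.3.1 of \cite{Pazy} rather than reproving it here.
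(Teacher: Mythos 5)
The paper offers no proof of this proposition at all: it is stated as a quotation of Theorem 7.3.1, p.~212, of \cite{Pazy}, so there is nothing internal to compare your argument against. What you have written is a correct outline of the classical Agmon--Douglis--Nirenberg scheme that underlies that cited theorem: constant-coefficient interior estimates via Mikhlin multipliers (your multiplier $\xi^{\beta}/(1+A_0(\xi))$ works, since strong ellipticity gives $\mathrm{Re}\,A_0(\xi)\geq c|\xi|^{2m}$ after accounting for the factor $(-1)^m$ in the symbol, so the denominator is bounded below and the homogeneity gives the Mikhlin derivative bounds), half-space estimates via flattening and the ODE reduction with the $m$ stable roots, and then freezing of coefficients, partition of unity, absorption of the $C\varepsilon\|\chi_j\varphi\|_{W^{2m,p}}$ term, and interpolation to handle the order-$(2m-1)$ commutators. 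The steps you leave implicit (density of smooth functions in $W^{2m,p}\cap W^{m,p}_0$ for justifying the model estimates, and above all the $L^p$-boundedness of the Poisson-type boundary operators) are exactly the technical core of Agmon--Douglis--Nirenberg, and you are explicit that you are deferring them to the literature --- which is no worse than what the paper itself does, since it defers the entire statement. In short: your sketch is sound and identifies the right difficulties, but, like the paper, it is ultimately a (well-organized) reduction to the standard reference rather than a self-contained proof.
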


Moreover, Theorem 7.3.5, pg. 214, of \cite{Pazy}, yields that
$-\tilde{A}$ is the infinitesimal generator of an analytic
semigroup of operators on $L^2(U)$. In particular $-\tilde{A}$ is
closed and it satisfies the Hille-Yosida condition (\ref{1410}) for some $\omega,C>0$.\\
Now we can
prove the theorem announced in the introductions. Note that we
need to prove the theorem for $\DD'^{(s)}_{L^1}\left((0,T)\times
U\right)$, since $\DD'^{(s)}_{L^p}\left((0,T)\times U\right)$ and
$\DD'^{(s)}_{L^1}\left((0,T)\times U\right)$ are isomorphic l.c.s.

\begin{thm}
Let $U$ be a bounded domain in $\RR^d$ with smooth boundary and
$A(x,\partial_x)$ strongly elliptic operator of order $2m$ on $U$.
Then for each $f\in \DD'^{(s)}_{L^1}\left((0,T)\times U\right)$
there exists $u\in \DD'^{(s)}_{L^1}\left((0,T)\times U\right)$
such that $u'_t+A(x,\partial_x)u=f$ in
$\DD'^{(s)}_{L^1}\left((0,T)\times U\right)$.
\end{thm}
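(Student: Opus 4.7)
Plan:

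The approach is to apply the abstract existence result, Theorem~\ref{1220}, on the Banach space $E = L^2(U)$, with closed operator $A = -\tilde{A}$, where $\tilde{A}$ is the $L^2(U)$-realization of $A(x, \partial_x)$ on the Dirichlet domain $D(\tilde{A}) = W^{2m,2}(U) \cap W_0^{m,2}(U)$. By Pazy's Theorem~7.3.5 (invoked just before Proposition~\ref{1205}), $-\tilde{A}$ generates an analytic semigroup on $L^2(U)$ and in particular satisfies the Hille-Yosida condition~(\ref{1410}).

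The first and main step is to realize the given $f \in \DD'^{(s)}_{L^1}((0,T)\times U)$ as an $L^2(U)$-valued ultradistribution in $t$. By Proposition~\ref{330}, applied on the bounded open set $V = (0,T) \times U$ with $p = 2$, there exist $h, C > 0$ and continuous functions $F_{(\alpha_0, \alpha')} \in \mathcal{C}(\overline{V}) \subset L^2(V)$ indexed by $\alpha_0 \in \NN$, $\alpha' \in \NN^d$, such that
\[
f = \sum_{(\alpha_0, \alpha')} D_t^{\alpha_0} D_x^{\alpha'} F_{(\alpha_0, \alpha')}, \qquad \sum_{(\alpha_0, \alpha')} \frac{(\alpha_0 + \alpha')!^{2s}}{h^{2(\alpha_0 + |\alpha'|)}} \|F_{(\alpha_0, \alpha')}\|_{L^\infty(V)}^2 < \infty.
\]
To transfer the $x$-derivatives into honest $L^2(U)$-valued coefficients I would use the fundamental-solution smoothing trick from the proof of Proposition~\ref{330} (convolution with the fundamental solution of $\Delta_x^d$ in the $x$-variable), combined with the elliptic regularity estimate of Proposition~\ref{1205} for $\tilde{A}$. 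This yields a rearranged representation $f = \sum_{\alpha_0 \in \NN} D_t^{\alpha_0} \mathbf{G}_{\alpha_0}$ with $\mathbf{G}_{\alpha_0} \in L^2(0, T; L^2(U))$ and $\sum_{\alpha_0} \alpha_0!^{2s} h^{-2\alpha_0} \|\mathbf{G}_{\alpha_0}\|_{L^2(0,T;L^2(U))}^2 < \infty$. By the converse direction of Theorem~\ref{990}, this series then converges absolutely in $\DD'^{(s)}_{L^1}(0, T; L^2(U))$ to the desired Banach-valued datum $\mathbf{f}$.

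In the second step, I apply Theorem~\ref{1220} to the equation $\mathbf{u}' + \tilde{A} \mathbf{u} = \mathbf{f}$, producing $\mathbf{u} \in \DD'^{(s)}_{L^1}(0, T; L^2(U))$. In the third step, by the kernel theorem (Theorem~\ref{1310}(ii)) combined with the continuous inclusion $L^2(U) \hookrightarrow \DD'^{(s)}_{L^1}(U)$ of Proposition~\ref{54}, there is a continuous embedding $\DD'^{(s)}_{L^1}(0, T; L^2(U)) \hookrightarrow \DD'^{(s)}_{L^1}((0,T) \times U)$. Under this embedding, $\mathbf{u}$ corresponds to an element $u \in \DD'^{(s)}_{L^1}((0,T)\times U)$; since $\tilde{A}$ agrees with $A(x, \partial_x)$ on test functions and both operators act continuously on $\dot{\BB}^{(s)}((0,T)\times U)$ and its dual, the equation $u_t' + A(x, \partial_x) u = f$ holds in $\DD'^{(s)}_{L^1}((0,T) \times U)$, as required.

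The main obstacle is the reorganization in Step~1: converting the arbitrary $x$-derivatives appearing in the structural representation of $f$ into genuine $L^2(U)$-valued summands in $t$. The boundedness of $U$, the smoothness of $\partial U$, and the elliptic regularity of $\tilde{A}$ all come into play here, and must be interlocked carefully so that the Gevrey-type summability conditions of the ultradistribution framework are preserved at each stage of the rearrangement.
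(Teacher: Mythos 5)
There is a genuine gap, and it sits exactly where you flag the ``main obstacle'': the reduction in your Step~1 is impossible in general. You propose to rewrite $f=\sum_{\alpha_0,\alpha'}D_t^{\alpha_0}D_x^{\alpha'}F_{\alpha_0,\alpha'}$ as $f=\sum_{\alpha_0}D_t^{\alpha_0}\mathbf{G}_{\alpha_0}$ with $\mathbf{G}_{\alpha_0}\in L^2(0,T;L^2(U))$, i.e.\ to realize $f$ as an $L^2(U)$-valued ultradistribution in $t$ alone. But a general element of $\DD'^{(s)}_{L^1}\left((0,T)\times U\right)$ is of \emph{infinite} order in $x$: take $T\in\DD'^{(s)}_{L^1}(U)$ which is not (the restriction of) any $L^2$ function and set $f=1(t)\otimes T(x)$. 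If $f=\sum_{\alpha_0}D_t^{\alpha_0}\mathbf{G}_{\alpha_0}$ with $\mathbf{G}_{\alpha_0}\in L^2(0,T;L^2(U))$, then pairing with $\varphi(t)\psi(x)$ shows that $\left(\int_0^T\varphi\right)T$ would be represented by the $L^2(U)$ function $\sum_{\alpha_0}(-1)^{\alpha_0}\int_0^T\mathbf{G}_{\alpha_0}(t,\cdot)\varphi^{(\alpha_0)}(t)\,dt$, a contradiction. The tools you invoke cannot repair this: the fundamental-solution trick in the proof of Proposition~\ref{330} and the a priori estimate of Proposition~\ref{1205} each trade only a \emph{fixed finite} number of derivatives, whereas here one must absorb $D_x^{\alpha'}$ with $|\alpha'|$ unbounded. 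So the choice $E=L^2(U)$ cannot carry the data, and Theorem~\ref{1220} cannot be applied on that space.

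The paper's proof avoids this precisely by \emph{not} resolving the $x$-derivatives. It takes for $E$ the weighted sequence space $\tilde{{\mathcal D}}^{'s}_{L^2,h}(U)$ (sequences $(F_{\beta})_{\beta\in\NN^d}$ of $L^2(U)$ functions with Gevrey-weighted summability), so that the infinite string of $x$-derivatives is carried along as the sequence index; the map $L_f(\varphi)=(\tilde F_{\varphi,\beta})_{\beta}$ simply pairs out the $t$-variable and keeps the $\beta$-indexed family intact. The price is that the elliptic operator must be lifted componentwise to an unbounded operator $A$ on this sequence space, and one must re-verify closedness and the Hille--Yosida condition for the lifted operator --- this is where Proposition~\ref{1205} is actually used (to show the componentwise resolvent maps into $\tilde{{\mathcal D}}^{'s}_{W^{2m,2},h}(U)$, i.e.\ into $D(A)$), not to absorb derivatives of $f$. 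Your Steps~2 and~3 (applying Theorem~\ref{1220} and descending via the kernel theorem, Theorem~\ref{1310}, together with a map of the form $(G_\beta)_\beta\mapsto\sum_\beta D^\beta G_\beta$) are sound in outline once $E$ is chosen correctly, but as written the proposal does not prove the theorem.
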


\begin{proof} Denote by $A$
the following unbounded operator: \beqs
A\tilde{f}&=&(-A(x,\partial_x)F_{\alpha})_{\alpha}\left(=(-\tilde{A}F_{\alpha})_{\alpha}\right),\\
D(A)&=&\left\{\tilde{f}=(F_{\alpha})_{\alpha}\in \tilde{{\mathcal D}}^{'s}_{W^{2m,2},h}(U)|\, F_{\alpha}\in W^{m,2}_0(U),\, \forall \alpha\in \NN^d\right\}.\eeqs
Then, obviously, $A:D(A)\subseteq \tilde{{\mathcal D}}^{'s}_{L^2,h}(U)\rightarrow \tilde{{\mathcal D}}^{'s}_{L^2,h}(U)$ is a linear operator. Since $\tilde{A}$ is closed, by Proposition \ref{1205}, it is easy to verify that $A$ is closed. For $\lambda>\omega$, define $B_{\lambda}:\tilde{{\mathcal D}}^{'s}_{L^2,h}(U)\rightarrow \tilde{{\mathcal D}}^{'s}_{L^2,h}(U)$, by $B_{\lambda}(\tilde{f})=(R(\lambda: -\tilde{A})F_{\alpha})_{\alpha}$. For $\tilde{f}=(F_{\alpha})_{\alpha}\in \tilde{{\mathcal D}}^{'s}_{L^2,h}(U)$,
\beqs
\|B_{\lambda}\tilde{f}\|_{\tilde{{\mathcal D}}^{'s}_{L^2,h}(U)}= \left(\sum_{|\alpha|=0}^{\infty}\frac{\alpha!^{2s}}{h^{2|\alpha|}}\|R(\lambda: -\tilde{A})F_{\alpha}\|^2_{L^2(U)}\right)^{1/2}\leq \frac{C}{\lambda-\omega}\|\tilde{f}\|_{\tilde{{\mathcal D}}^{'s}_{L^2,h}(U)}.
\eeqs
Hence $B_{\lambda}$ is well defined continuous operator. For $(F_{\alpha})_{\alpha}\in \tilde{{\mathcal D}}^{'s}_{L^2,h}(U)$, by the Hille-Yosida condition for $-\tilde{A}$, Proposition \ref{1205} and the fact that $\tilde{A}R(\lambda:-\tilde{A})=\mathrm{Id}-\lambda R(\lambda:-\tilde{A})$, we obtain
\beqs
\left\|R(\lambda:-\tilde{A})F_{\alpha}\right\|_{W^{2m,2}(U)}\leq \tilde{C}\left(1+\frac{C(\lambda+1)}{\lambda-\omega}\right)\|F_{\alpha}\|_{L^2(U)}.
\eeqs
This implies that $B_{\lambda}(F_{\alpha})_{\alpha}=(R(\lambda: -\tilde{A})F_{\alpha})_{\alpha}\in \tilde{{\mathcal D}}^{'s}_{W^{2m,2},h}(U)$. Obviously $R(\lambda: -\tilde{A})F_{\alpha}\in W^{m,2}_0(U)$, for each $\alpha\in\NN^d$. Hence, the image of $B_{\lambda}$ is contained in $D(A)$. Conversely, for $(F_{\alpha})_{\alpha}\in D(A)$, let $G_{\alpha}=(\lambda+\tilde{A})F_{\alpha}$, for each $\alpha\in\NN^d$. Then $(G_{\alpha})_{\alpha}\in \tilde{{\mathcal D}}^{'s}_{L^2,h}(U)$ and $B_{\lambda}(G_{\alpha})_{\alpha}=(F_{\alpha})_{\alpha}$. Hence, the image of $B_{\lambda}$ is $D(A)$. Also, $(\lambda-A)B_{\lambda}=\mathrm{Id}$ and $B_{\lambda}(\lambda-A)=\mathrm{Id}$. We obtain that $\lambda>\omega$ is in the resolvent of $A$, $R(\lambda:A)=B_{\lambda}$, and similarly as above one can prove that $\|(\lambda-\omega)^k R(\lambda:A)^k\|_{\mathcal{L}\left(\tilde{{\mathcal D}}^{'s}_{L^2,h}(U)\right)}\leq C$, i.e. $A$ satisfies the Hille-Yosida condition.\\
\indent We want to solve the equation $u'_t(t,x)+A(x,\partial_x)u(t,x)=f(t,x)$ in $\DD'^{(s)}_{L^1}\left((0,T)\times U\right)$. For the simplicity of notation put $U_1=(0,T)\times U$. By Proposition \ref{330}, there exist $h>0$ and $F_{\alpha,\beta}(t,x)\in \mathcal{C}\left(\overline{U_1}\right)$, $\alpha\in\NN$, $\beta\in\NN^d$ such that
\beq\label{1210}
f=\sum_{\alpha,\beta}\partial^{\alpha}_t \partial^{\beta}_x F_{\alpha,\beta}\,\,\,\, \mbox{and}\,\,\,\, \sum_{\alpha,\beta}\frac{\left(\alpha!\beta!\right)^{2s}}{h^{2(\alpha+|\beta|)}}\|F_{\alpha,\beta}\|^2_{L^{\infty}(\overline{U_1})} <\infty.
\eeq
Let $E=\tilde{{\mathcal D}}^{'s}_{L^2,h}(U)$. Let $\ds C'_1=1+\sup_{\beta\in\NN^d}h^{|\beta|}/\beta!^s$ and put $C_1=(1+T+|U|)C'_1$. Let $L_f$ be the mapping $\varphi\mapsto L_f(\varphi)$, $\dot{\BB}^{(s)}(0,T)\rightarrow E$ defined by $L_f(\varphi)=(\tilde{F}_{\varphi,\beta})_{\beta}$, where $\ds \tilde{F}_{\varphi,\beta}(x)=\sum_{\alpha}(-1)^{\alpha}\int_0^T F_{\alpha,\beta}(t,x)\varphi^{(\alpha)}(t)dt$. We prove that it is well defined and continuous mapping. First we prove that $\tilde{F}_{\varphi,\beta}$ is continuous function on $\overline{U}$ for each $\beta\in\NN^d$ and $\varphi\in\dot{\BB}^{(s)}(0,T)$. For $\varepsilon>0$, by (\ref{1210}), we can find $k_0\in\ZZ_+$ such that $\ds \sum_{\alpha+|\beta|\geq k_0} \frac{\left(\alpha!\beta!\right)^{2s}}{h^{2(\alpha+|\beta|)}}\|F_{\alpha,\beta}\|^2_{L^{\infty}(\overline{U_1})} <\frac{\varepsilon^2}{(4C_1)^2}$. For each $\alpha\in\NN$, $\beta\in\NN^d$, $F_{\alpha,\beta}$ is uniformly continuous (since $\overline{U_1}$ is compact in $\RR^{d+1}$), hence there exists $\delta>0$ such that for every $t,t'\in[0,T]$, $x,x'\in \overline{U}$ such that $|t-t'|\leq\delta$ and $|x-x'|\leq \delta$,
\beqs
\sum_{\alpha+|\beta|=0}^{k_0-1}\frac{\left(\alpha!\beta!\right)^{2s}}{h^{2(\alpha+|\beta|)}} \left|F_{\alpha,\beta}(t,x)-F_{\alpha,\beta}(t',x')\right|^2 <\frac{\varepsilon^2}{(2C_1)^2}.
\eeqs
Hence\\
\\
$\left|\tilde{F}_{\varphi,\beta}(x)-\tilde{F}_{\varphi,\beta}(x')\right|$
\beqs
\leq \|\varphi\|_{\DD^{(s)}_{L^2,h}(0,T)} \left(\sum_{\alpha=0}^{\infty}\frac{(\alpha!)^{2s}}{h^{2\alpha}}\int_0^T \left|F_{\alpha,\beta}(t,x)-F_{\alpha,\beta}(t,x')\right|^2dt\right)^{1/2}\leq\eeqs \beqs \leq \varepsilon\|\varphi\|_{\DD^{(s)}_{L^2,h}(0,T)}
\eeqs
and the continuity of $\tilde{F}_{\varphi,\beta}$ follows. Also, one easily verifies that
\beqs
\left(\sum_{\beta}\frac{\beta!^{2s}}{h^{2|\beta|}}\left\|\tilde{F}_{\varphi,\beta}\right\|^2_{L^{\infty}(U)}\right)^{1/2}\leq\eeqs  \beqs T^{1/2}\|\varphi\|_{\DD^{(s)}_{L^2,h}(U)} \left(\sum_{\alpha,\beta}\frac{\left(\alpha!\beta!\right)^{2s}}{h^{2(\alpha+|\beta|)}} \|F_{\alpha,\beta}\|^2_{L^{\infty}(\overline{U_1})}\right)^{1/2}.
\eeqs
Since $\left\|\tilde{F}_{\varphi,\beta}\right\|_{L^2(U)}\leq |U|^{1/2}\left\|\tilde{F}_{\varphi,\beta}\right\|_{L^{\infty}(U)}$, we obtain that $L_f$ is well defined and $L_f\in\mathcal{L}\left(\dot{\BB}^{(s)}(0,T),E\right)$. Now, as $\mathcal{L}_b\left(\dot{\BB}^{(s)}(0,T),E\right)\cong\DD'^{(s)}_{L^1}(0,T;E)$ denote by $\mathbf{f}\in\DD'^{(s)}_{L^1}(0,T;E)$ the mapping $L_f$.\\
\indent Now, Theorem \ref{1220} implies that there exists $\mathbf{u}\in \DD'^{(s)}_{L^1}(0,T;E)$ such that $\mathbf{u}'=A\mathbf{u}+\mathbf{f}$ in $\DD'^{(s)}_{L^1}(0,T;E)$. Each element $\mathbf{g}=(G_{\alpha})_{\alpha}\in E=\tilde{{\mathcal D}}^{'s}_{L^p,h}(U)$ generates an element of $\mathcal{L}_b\left(\dot{\BB}^{(s)}(U),\CC\right)=\DD'^{(s)}_{L^1}(U)$ (see Section \ref{910}) by $\ds \langle S(\mathbf{g}),\psi\rangle=\sum_{\beta}(-1)^{|\beta|}\int_U G_{\beta}(x)\partial^{\beta}_x\psi(x)dx$ and one easily verifies that the mapping $S:E\rightarrow \DD'^{(s)}_{L^1}(U)$, $\mathbf{g}\mapsto S(\mathbf{g})$, is continuous. Hence, we have the continuous mapping $\varphi\mapsto S(\langle \mathbf{u},\varphi\rangle)$, given by
\beqs
\dot{\BB}^{(s)}(0,T)\xrightarrow{\langle \mathbf{u},\cdot\rangle}E\xrightarrow{S}\DD'^{(s)}_{L^1}(U).
\eeqs
Since $\varphi\mapsto S(\langle \mathbf{u},\varphi\rangle)\in\mathcal{L}_b\left(\dot{\BB}^{(s)}(0,T),\DD'^{(s)}_{L^1}(U)\right)\cong\DD'^{(s)}_{L^1}(U_1)$ (where the isomorphism follows from Theorem \ref{1310}), denote by $u\in\DD'^{(s)}_{L^1}(U_1)$ this ultradistribution. Then, for $\varphi\in \dot{\BB}^{(s)}(0,T)$, $\psi\in \dot{\BB}^{(s)}(U)$, $\langle u(t,x),\varphi(t)\psi(x)\rangle=\langle S(\langle \mathbf{u},\varphi\rangle),\psi\rangle$. Since $\langle \mathbf{u}',\varphi\rangle=-\langle \mathbf{u},\varphi'\rangle$, for all $\varphi\in \dot{\BB}^{(s)}(0,T)$ we have \\ $\langle u'_t(t,x),\varphi(t)\psi(x)\rangle=-\langle u(t,x),\varphi'(t)\psi(x)\rangle=\langle S(\langle \mathbf{u}',\varphi\rangle),\psi\rangle$, for all $\varphi\in \dot{\BB}^{(s)}(0,T)$, $\psi\in \dot{\BB}^{(s)}(U)$. Also, for $\varphi\in \dot{\BB}^{(s)}(0,T)$, since $\langle\mathbf{u},\varphi\rangle\in D(A)$, $\langle \mathbf{u},\varphi\rangle=(G_{\varphi,\beta})_{\beta}\in D(A)$. Then, by the definition of $A$, $A\langle \mathbf{u},\varphi\rangle=(-\tilde{A}G_{\varphi,\beta})_{\beta}\in E$. Now, for $\psi\in\dot{\BB}^{(s)}(U)$,\\
\\
$\ds\left\langle S\left((-\tilde{A}G_{\varphi,\beta})_{\beta}\right),\psi\right\rangle$
\beqs
&=&-\sum_{\beta}(-1)^{|\beta|}\int_U \tilde{A}G_{\varphi,\beta}(x)\partial^{\beta}_x\psi(x)dx\\
&=&-\sum_{\beta}(-1)^{|\beta|}\int_U G_{\varphi,\beta}(x){}^tA(x,\partial_x)\partial^{\beta}_x\psi(x)dx=-\langle S(\langle \mathbf{u},\varphi\rangle),{}^tA(x,\partial_x)\psi\rangle\\
&=&-\langle u(t,x),\varphi(t){}^tA(x,\partial_x)\psi(x)\rangle=-\langle A(x,\partial_x)u(t,x),\varphi(t)\psi(x)\rangle,
\eeqs
i.e. $\left\langle S\left(A\langle \mathbf{u},\varphi\rangle\right),\psi\right\rangle=-\langle A(x,\partial_x)u(t,x),\varphi(t)\psi(x)\rangle$ for all $\varphi\in \dot{\BB}^{(s)}(0,T)$, $\psi\in \dot{\BB}^{(s)}(U)$. Moreover, observe that for $\varphi\in \dot{\BB}^{(s)}(0,T)$, $\psi\in \dot{\BB}^{(s)}(U)$, we have
\beqs
\left\langle S(\langle \mathbf{f},\varphi\rangle),\psi\right\rangle&=&\sum_{\beta}(-1)^{|\beta|}\int_U \tilde{F}_{\varphi,\beta}(x)\partial^{\beta}_x\psi(x)dx\\
&=&\sum_{\alpha,\beta}(-1)^{\alpha+|\beta|}\int_{U_1} F_{\alpha,\beta}(t,x)\varphi^{(\alpha)}(t)\partial^{\beta}_x\psi(x)dtdx\\
&=&\langle f(t,x),\varphi(t)\psi(x)\rangle,
\eeqs
where, in the second equality, we used the definition of $\tilde{F}_{\varphi,\beta}$ and Fubini's theorem since $\ds\sum_{\alpha,\beta}\int_{U_1} \left|F_{\alpha,\beta}(t,x)\right|\left|\varphi^{(\alpha)}(t)\right|\left|\psi^{(\beta)}(x)\right|dtdx<\infty$ by (\ref{1210}). Now, since $\mathbf{u}'=A\mathbf{u}+\mathbf{f}$ in $\DD'^{(s)}_{L^1}(0,T;E)$, for every $\varphi\in \dot{\BB}^{(s)}(0,T)$, $\langle\mathbf{u}'(t),\varphi(t)\rangle=A\langle\mathbf{u}(t),\varphi(t)\rangle+\langle\mathbf{f}(t),\varphi(t)\rangle$ in $E$. Then $S\left(\langle\mathbf{u}',\varphi\rangle\right)=S\left(A\langle\mathbf{u},\varphi\rangle\right)+ S\left(\langle\mathbf{f},\varphi\rangle\right)$ in $\DD'^{(s)}_{L^1}(U)$. Hence, for $\varphi\in \dot{\BB}^{(s)}(0,T)$, $\psi\in \dot{\BB}^{(s)}(U)$, we have
\beqs
\langle u'_t(t,x),\varphi(t)\psi(x)\rangle&=&\langle S(\langle \mathbf{u}',\varphi\rangle),\psi\rangle=\left\langle S\left(A\langle\mathbf{u},\varphi\rangle\right),\psi\right\rangle+ \left\langle S\left(\langle\mathbf{f},\varphi\rangle\right),\psi\right\rangle\\
&=&-\langle A(x,\partial_x)u(t,x),\varphi(t)\psi(x)\rangle+\langle
f(t,x),\varphi(t)\psi(x)\rangle. \eeqs Since
$\dot{\BB}^{(s)}(0,T)\hat{\otimes}\dot{\BB}^{(s)}(U)\cong\dot{\BB}^{(s)}(U_1)$
by Theorem \ref{1310}, we obtain the claim in the theorem.
\end{proof}

\begin{ex}
An interesting application of this theorem is obtained by taking $A(x,\partial_x)$ to be $-\Delta_x$ ($\Delta_x$ is the Laplacian $\partial_{x_1}^2+...+\partial_{x_d}^2$) and $U$ to be arbitrary bounded domain with smooth boundary in $\RR^d$. Then $-\Delta_x$ is strongly elliptic operator of order $2$ on $U$. The above theorem then asserts that for $f\in \DD'^{(s)}_{L^1}\left((0,T)\times U\right)$ the equation $u'_t-\Delta_x u=f$ always has solution in $\DD'^{(s)}_{L^1}\left((0,T)\times U\right)$.\end{ex}
\begin{ex}
If $U=(0,T_1)\subseteq\RR$ and $A$ is differentiation in $x$, arguing as above, one can prove the following assertion:
Let $f\in \DD'^{(s)}_{L^1}\left((0,T)\times (0,T_1)\right)$. The equation $u'_t+u'_x=f$ always has a solution in $\DD'^{(s)}_{L^1}\left((0,T)\times (0,T_1)\right)$.
\end{ex}

\end{document}